\newtheorem{theorem}{Theorem}
\newtheorem{remark}[theorem]{Remark}
\newtheorem{lemma}[theorem]{Lemma}
\newtheorem{proposition}[theorem]{Proposition}
\newtheorem{corollary}[theorem]{Corollary}
\numberwithin{equation}{section}
\newcommand{\tto}{\twoheadrightarrow}
\begin{document}
\title[Category $\mathcal{O}$ for the Schr{\"o}dinger algebra]{Category $\mathcal{O}$ for the Schr{\"o}dinger algebra}
\author[B.~Dubsky, R.~L{\"u}, V.~Mazorchuk and K.~Zhao]{Brendan Dubsky, 
Rencai L{\"u}, Volodymyr Mazorchuk and Kaiming Zhao}
\date{\today\\ {\bf Keywords:}  Lie algebra; category $\mathcal{O}$; simple module; weight module; 
indecomposable module; annihilator\\
{\bf 2010  Math. Subj. Class.:} 17B10, 17B65, 17B66, 17B68}

\begin{abstract}
We study category $\mathcal{O}$ for the (centrally extended) Schr{\"o}dinger algebra.
We determine the quivers for all blocks and relations for blocks of nonzero central charge.
We also describe the quiver and relations for the finite dimensional part of $\mathcal{O}$.
We use this to determine the center of the universal enveloping algebra and annihilators
of Verma modules. Finally, we classify primitive ideals of the universal enveloping algebra 
which intersect the center of the centrally extended Schr{\"o}dinger algebra trivially.
\end{abstract}
\maketitle

\section{Introduction and description of the results}\label{s1}

The Schr{\"o}dinger Lie group describes symmetries of the free particle Schr{\"o}dinger equation, see \cite{Pe}. 
The corresponding Lie algebra is called the Schr{\"o}dinger algebra,  see \cite{DDM}.
In the $1+1$-dimensional space-time this algebra is, roughly, a semi-direct product of the simple Lie 
algebra $\mathfrak{sl}_2$ with its simple $2$-dimensional representation (the latter forms an abelian ideal). 
This Lie algebra admits a universal $1$-dimensional central extension which is called the centrally 
extended Schr{\"o}dinger algebra or, simply, the Schr{\"o}dinger algebra, abusing the language.

Some basics of the representation theory of the Schr{\"o}dinger algebra were studied in \cite{DDM,DDM2},
including description of simple highest weight modules. Recently there appeared a number of papers 
studying various aspects of the representation theory of the Schr{\"o}dinger algebra, see
\cite{AD,LMZ1,LMZ,Du,Wu,WZ1,WZ2}. In particular, \cite{Du} classifies all simple modules
over the Schr{\"o}dinger algebra which are weight and have finite dimensional weight spaces.

The present paper started with the observation that the claim of 
\cite[Theorem~1.1(1)]{WZ1} contradicts \cite[page 244]{Pe}
and a natural subsequent attempt to repair the main result of \cite{WZ1} which claims to describe annihilators
of Verma modules over the Schr{\"o}dinger algebra. In the classical situation of simple Lie algebras,
study of annihilators of Verma modules usually follows the study of the BGG category $\mathcal{O}$ and
its equivalent realization using Harish-Chandra bimodules. This naturally led us to the problem of understanding
category $\mathcal{O}$ for the Schr{\"o}dinger algebra. This is the main objective of the present paper.

Making a superficial parallel with the theory of affine Lie algebras, it turns out that the representation theory of 
the Schr{\"o}dinger algebra splits into two very different cases, namely the case of nonzero central charge 
and the one of the zero central charge, where by the {\em central charge} we, as usual, mean the
eigenvalue of the (unique up to scalar) central element of the Schr{\"o}dinger algebra
(note that such an eigenvalue is unique for all simple modules). For nonzero central charge our results
are complete, whereas for zero central charge we get less information, however, involving much more
complicated arguments. Nevertheless, we derive enough properties of $\mathcal{O}$ to be able to describe 
the center of the universal enveloping algebra of the Schr{\"o}dinger algebra and
annihilators of Verma modules, repairing the main results of \cite{WZ1}. Along the way we also
describe the ``finite dimensional'' part of $\mathcal{O}$ which, in contrast with the classical case, is
no longer a semi-simple category. Our description, in particular,  implies that the category of finite dimensional 
modules over the Schr{\"o}dinger algebra has wild representation type (cf. \cite{Mak}).

The paper is organized as follows: in Section~\ref{s2} we collected all necessary preliminaries.
Section~\ref{s3} studies basics on category $\mathcal{O}$ and describes blocks of nonzero
central charge. Section~\ref{s4} studies blocks of zero central charge and the ``finite dimensional''
part of $\mathcal{O}$. As a technical tool we also introduce a natural graded version of 
$\mathcal{O}$ (which makes sense only for zero central charge). Section~\ref{s5} contains several applications,
in particular,  description of the center of the universal enveloping algebra of the 
Schr{\"o}dinger algebra and description of annihilators of Verma modules.
In Section~\ref{s6} we outline the setup to study Harish-Chandra bimodules for 
the Schr{\"o}dinger algebra and apply it to obtain a classification of primitive ideals with nonzero central charge.
\vspace{2mm}

After the paper was finished we were informed that the fact that the results of \cite{WZ1} are not correct
was recently pointed out in \cite{WZ3}.
\vspace{5mm}

\noindent{\bf Acknowledgements.}
R.L. is partially supported by NSF of China  (Grant 11371134) and 
Jiangsu Government Scholarship for Overseas Studies (JS-2013-313).\\
V. M. is partially supported by the Swedish Research Council.\\
K.Z. is partially supported by  NSF of China (Grant 11271109) and NSERC.\\
We thank Peter {\v S}emrl for informing us about \cite{WZ3}.
We also thank the referee for very helpful comments.

\section{The Schr{\"o}dinger algebra}\label{s2}

\subsection{Notation}\label{s2.0}

We denote by $\mathbb{N}$, $\mathbb{Z}_+$ and $\mathbb{C}$ the sets of positive integers, non-negative integers
and complex numbers, respectively. For a Lie algebra $\mathfrak{a}$ we denote by $U(\mathfrak{a})$ the
universal enveloping algebra of $\mathfrak{a}$. We also denote by $Z(\mathfrak{a})$ the center of $U(\mathfrak{a})$.
We denote by ${\underline{\hspace{2mm}}}^*$ the usual duality $\mathrm{Hom}_{\mathbb{C}}({}_-,\mathbb{C})$.
For an associative algebra $A$ we denote by $A\text{-}\mathrm{Mod}$ the category of all $A$-modules and
by $A\text{-}\mathrm{mod}$ the full subcategory of $A\text{-}\mathrm{Mod}$ consisting of all
finitely generated modules. For a Lie algebra $\mathfrak{a}$ we set 
\begin{displaymath}
\mathfrak{a}\text{-}\mathrm{Mod}:=U(\mathfrak{a})\text{-}\mathrm{Mod}\quad \text{ and }\quad
\mathfrak{a}\text{-}\mathrm{mod}:=U(\mathfrak{a})\text{-}\mathrm{mod}.
\end{displaymath}
We write $\otimes$ for $\otimes_{\mathbb{C}}$.

\subsection{Definition}\label{s2.1}

The {\em Schr{\"o}dinger algebra} $\mathfrak{s}$ is the complex Lie algebra with a basis 
$\{e,h,f,p,q,z\}$ where $z$ is central and the rest of the Lie bracket is given as follows:
\begin{equation}
\label{commrelations}
\begin{array}{lll}
\left[h,e\right]=2e, & \left[e,f\right]=h,  & \left[h,f\right]=-2f,\\
\left[e,q\right]=p, &  \left[e,p\right]=0, & \left[h,p\right]=p,\\
\left[f, p\right]=q, &\left[f,q\right]=0, &\left[h,q\right]=-q,\\
&\left[p,q\right]=z.
\end{array}
\end{equation}
The algebra $\mathfrak{s}$ is not semi-simple, its radical being the nilpotent ideal 
$\mathfrak{i}$ spanned by $p,q$ and $z$. Note that $\mathfrak{i}$ is a Heisenberg Lie algebra
while the quotient $\mathfrak{s}/\mathfrak{i}$ is isomorphic to the simple 
complex Lie algebra $\mathfrak{sl}_2$. The center of $\mathfrak{s}$ is spanned by $z$.
We denote by $\overline{\mathfrak{s}}$ the {\em centerless} Schr{\"o}dinger algebra
$\mathfrak{s}/\mathbb{C}z$.

To simplify notation we set $U=U(\mathfrak{s})$.
With respect to the adjoint action of $h$ we have the decomposition 
\begin{displaymath}
U=\bigoplus_{i\in\mathbb{Z}}U_i,\qquad \text{ where}\qquad  U_i:=\{u\in U\,\vert\, [h,u]=iu\}.
\end{displaymath}
Note that $U_iU_j\subset U_{i+j}$ for all $i,j\in\mathbb{Z}$.
The algebra $U$ is a noetherian domain (both, left and right).

\subsection{Casimir element}\label{s2.2}

Consider the classical Casimir element $\underline{\mathtt{c}}:=(h+1)^2+4fe$ in $U(\mathfrak{sl}_2)$ and
the following element in $U(\mathfrak{s})$:
\begin{multline*}
\mathtt{c}:=\underline{\mathtt{c}}z-2(fp^2-q^2e-2qp-hqp)+hz+z=\\=
(h^2+h+4fe)z-2(fp^2-eq^2-hpq).
\end{multline*}

The following statement verifies \cite[Formula~(3)]{AD} and \cite[Page~244]{Pe}.

\begin{lemma}\label{lem1}
We have $\mathtt{c}\in Z(\mathfrak{s})$.
\end{lemma}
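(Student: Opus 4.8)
The plan is to show that $\mathtt{c}$ commutes with a generating set of the associative algebra $U$. Since $[e,f]=h$, $[f,p]=q$ and $[p,q]=z$, the Lie algebra $\mathfrak{s}$ is generated by $e$, $f$ and $p$; hence $U$ is generated by these three elements, and it suffices to verify $[e,\mathtt{c}]=0$, $[f,\mathtt{c}]=0$ and $[p,\mathtt{c}]=0$. (As a preliminary remark, every monomial occurring in the expression $\mathtt{c}=(h^2+h+4fe)z-2(fp^2-eq^2-hpq)$ has $h$-weight $0$, so $\mathtt{c}\in U_0$ and in particular $[h,\mathtt{c}]=0$; this is not needed for the argument but serves as a consistency check.)

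To cut the work in half I would first introduce the Lie algebra automorphism $\sigma$ of $\mathfrak{s}$ determined on the basis by
\[
\sigma(e)=f,\qquad \sigma(f)=e,\qquad \sigma(h)=-h,\qquad \sigma(p)=q,\qquad \sigma(q)=p,\qquad \sigma(z)=-z;
\]
a routine check against \eqref{commrelations} shows that $\sigma$ preserves all brackets and that $\sigma^2=\mathrm{id}$. Extending $\sigma$ to $U$ and using $ef=fe+h$ and $qp=pq-z$ one computes directly that $\sigma(\mathtt{c})=-\mathtt{c}$. Applying $\sigma$ to the identity $[e,\mathtt{c}]=0$ then gives $-[f,\mathtt{c}]=[f,-\mathtt{c}]=\sigma([e,\mathtt{c}])=0$, i.e.\ $[f,\mathtt{c}]=0$; thus only $[e,\mathtt{c}]=0$ and $[p,\mathtt{c}]=0$ remain to be checked.

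Each of these two identities is a direct computation: regarding $[e,-]$ and $[p,-]$ as derivations of $U$, one uses the elementary relations $[e,z]=[e,p]=0$, $[e,h]=-2e$, $[e,f]=h$, $[e,q]=p$ (respectively $[p,z]=[p,e]=[p,p]=0$, $[p,h]=-p$, $[p,f]=-q$, $[p,q]=z$), expands $[e,\mathtt{c}]$ (respectively $[p,\mathtt{c}]$) term by term on $\mathtt{c}=(h^2+h+4fe)z-2(fp^2-eq^2-hpq)$, rewrites everything in a fixed PBW normal form, and checks that all terms cancel. Here the role of the correction terms $-2(fp^2-eq^2-hpq)$ is precisely to annihilate the (nonzero) contributions of $[e,(h^2+h+4fe)z]$ and $[p,(h^2+h+4fe)z]$, which fail to vanish because $p$ and $q$ do not commute with $\mathfrak{sl}_2$.

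I expect no conceptual difficulty here; the only thing that can go wrong is a bookkeeping slip in the noncommutative reorderings, so the ``hard part'' is merely to organize the two expansions cleanly — and the reduction to the generators $e,f,p$ together with the symmetry $\sigma(\mathtt{c})=-\mathtt{c}$ is exactly what keeps this from being four lengthy calculations. (An even shorter route would be to cite that $\mathtt{c}$ agrees with the quadratic Casimir element of $\mathfrak{s}$ recorded in \cite{AD} or \cite[Page~244]{Pe}, but since the lemma is phrased as a verification of those formulas, the self-contained bracket check is preferable.)
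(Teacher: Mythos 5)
Your overall strategy is the same as the paper's: verify that $\mathtt{c}$ commutes with enough elements of $\mathfrak{s}$ by direct bracket computation (the paper checks $e,f,p,q$ explicitly, noting $\mathtt{c}\in U_0$ for $h$ and that $z$ is central). Your two reductions are both correct and genuinely economize the work: since $h=[e,f]$, $q=[f,p]$ and $z=[p,q]$, the set $\{e,f,p\}$ generates $U$, so three brackets suffice; and your map $\sigma$ (with $\sigma(e)=f$, $\sigma(h)=-h$, $\sigma(p)=q$, $\sigma(z)=-z$) really is an involutive automorphism of $\mathfrak{s}$ --- note it is \emph{not} the anti-automorphism used for the duality in Subsection~\ref{s3.8} --- and one does find $\sigma(\mathtt{c})=-\mathtt{c}$ after the reorderings $ef=fe+h$, $qp=pq-z$, so $[f,\mathtt{c}]=0$ follows from $[e,\mathtt{c}]=0$. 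This leaves only two computations instead of the paper's four (two explicit, two ``similar'').

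The one shortcoming is that you never actually carry out those two computations: for a lemma whose entire content is the identity $[e,\mathtt{c}]=[p,\mathtt{c}]=0$, asserting that ``all terms cancel'' in a PBW normal form is the claim, not a proof of it. The cancellations do hold --- e.g.\ $[e,(h^2+h+4fe)z]=2ez$ while $[e,fp^2-eq^2-hpq]=hp^2-e(pq+qp)+2epq-hp^2=e[p,q]=ez$, so the two contributions to $[e,\mathtt{c}]$ cancel, and similarly for $[p,\mathtt{c}]$ --- but these few lines must appear for the argument to be complete. With them written out, your proof is correct and marginally shorter than the paper's.
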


\begin{proof}
Clearly, every summand of $\mathtt{c}$ is in $U_0$ and hence $\mathtt{c}\in U_0$. 
Further, using the facts that $(h+1)^2+4fe$ is a Casimir element
for $\mathfrak{sl}_2$ and $z$ is central in $\mathfrak{s}$, we have
\begin{displaymath}
\begin{array}{rcl}
[e,\mathtt{c}]&=&[e,-hz]-2[e,fp^2-eq^2-hpq]\\
&=&2ez-2([e,fp^2]-[e,eq^2]-[e,hpq])\\
&=&2ez-2(hp^2-epq-eqp+2epq-hp^2)\\
&=&0.
\end{array}
\end{displaymath}
Similarly one checks that $[f,\mathtt{c}]=0$.

Further, we have 
 \begin{displaymath}
\begin{array}{rcl}
[p,\mathtt{c}]&=&[p,h^2+h+4fe]z-2[p,fp^2-eq^2-hpq]\\
&=&(-ph-hp-p-4qe)z-2(-qp^2-2ezq+p^2q-hpz)\\
&=&0.
\end{array}
\end{displaymath}
Similarly one checks that $[q,\mathtt{c}]=0$. This shows that 
$\mathtt{c}\in Z(\mathfrak{s})$.
\end{proof}

\subsection{Cartan subalgebra}\label{s2.3}

Denote by $\mathfrak{h}$ the Cartan subalgebra of $\mathfrak{s}$, spanned by $h$ and $z$. The algebra 
$\mathfrak{h}$ is commutative and its adjoint action on $\mathfrak{s}$ is diagonalizable. 
Fix the basis  $\{h^{\checkmark},z^{\checkmark}\}$ in $\mathfrak{h}^*$ which is dual to the basis $\{h,z\}$.
For $\alpha\in \mathfrak{h}^*$ set 
\begin{displaymath}
\mathfrak{s}_\alpha:=\{x\in\mathfrak{s}\,\vert\, [H,x]=\alpha(H)x 
\text{ for all }H\in \mathfrak{h}\}.
\end{displaymath}
Then we have
\begin{displaymath}
\mathfrak{s}=\mathfrak{s}_{-2h^{\checkmark}}\oplus
\mathfrak{s}_{-h^{\checkmark}}\oplus\mathfrak{s}_0
\oplus\mathfrak{s}_{h^{\checkmark}}\oplus\mathfrak{s}_{2h^{\checkmark}}  
\end{displaymath}
where $\mathfrak{s}_0=\mathfrak{h}$ has dimension two while all other spaces are one-\-di\-men\-si\-o\-nal.
We set $R:=\{\pm 2h^{\checkmark},\pm h^{\checkmark}\}$ and call the elements of $R$ {\em roots}
of $\mathfrak{s}$. Note that $R$ is a root system (not reduced) in its linear span.

As usual, we denote by $\rho$ the half of the sum of all positive roots, that is 
$\rho=\frac{3}{2}h^{\checkmark}$. Let $W$ be the Weyl group of $R$, that is the group consisting of the
identity and the linear transformation $r$ defined as follows:
\begin{displaymath}
r(z^{\checkmark})=z^{\checkmark}\quad\text{ and }\quad r(h^{\checkmark})=-h^{\checkmark}.
\end{displaymath}
Then $W$ naturally acts on $\mathfrak{h}^*$ and we also have the $\rho$-shifted {\em dot-action} given by
$w\cdot \lambda=w(\lambda+\rho)-\rho$ for $w\in W$ and $\lambda\in \mathfrak{h}^*$.

\subsection{Triangular decomposition}\label{s2.4}

Write  
\begin{displaymath}
R=R_-\cup R_+,\quad\text{ where } \quad  R_{+}:=\{2h^{\checkmark},h^{\checkmark}\}\quad\text{ and } \quad R_-=-R_+ 
\end{displaymath}
and set
\begin{displaymath}
\mathfrak{n}_{\pm}:=\bigoplus_{\alpha\in R_{\pm}}\mathfrak{s}_\alpha.
\end{displaymath}
Then the decomposition
\begin{equation}\label{eq1}
\mathfrak{s}=\mathfrak{n}_{-}\oplus\mathfrak{h}\oplus \mathfrak{n}_{+} 
\end{equation}
is a {\em triangular decomposition} of $\mathfrak{s}$ in the sense of \cite{MP}. 
This decomposition implies the following decomposition
of $U$ as $U(\mathfrak{n}_{-})\text{--}U(\mathfrak{n}_{+})$-bimodules:
\begin{displaymath}
U\cong U(\mathfrak{n}_{-})\otimes U(\mathfrak{h})\otimes U(\mathfrak{n}_{+}).
\end{displaymath}
We also set $\mathfrak{b}:=\mathfrak{h}\oplus \mathfrak{n}_{+}$.

\subsection{Weight modules}\label{s2.5}

As usual, an $\mathfrak{s}$-module $M$ is called a {\em weight} module provided that 
\begin{displaymath}
M\cong\bigoplus_{\lambda\in \mathfrak{h}^*} M_{\lambda},\,\,\,\text{ where }\,\,\,
M_{\lambda}:=\{v\in M\,\vert\, H\cdot v=\lambda(H)v\text{ for all }H\in\mathfrak{h}\}.
\end{displaymath}
Elements $\lambda\in \mathfrak{h}^*$ are called {\em weights} and for $\lambda\in \mathfrak{h}^*$
the space $M_{\lambda}$ is the corresponding {\em weight space}. We denote by $\mathrm{supp}(M)$
the {\em support} of $M$, that is the set of all $\lambda\in \mathfrak{h}^*$ such that $M_{\lambda}\neq 0$.

Since the adjoint action of $\mathfrak{h}$ on $\mathfrak{s}$ is diagonalizable, it follows
that a module generated by a weight vector is a weight module. We denote by $\mathfrak{W}$ the full
subcategory of $U\text{-}\mathrm{Mod}$ consisting of all weight modules.

It is very natural to introduce another class of ``weight'' modules.
An $\mathfrak{s}$-module $M$ is called an {\em $h$-weight} module provided that 
\begin{displaymath}
M\cong\bigoplus_{{\dot h}\in \mathbb{C}} M_{{\dot h}},\,\,\,\text{ where }\,\,\,
M_{\lambda}:=\{v\in M\,\vert\, h\cdot v={\dot h}v\}.
\end{displaymath}
Elements ${\dot h}\in \mathbb{C}$ are called {\em $h$-weights} and for ${\dot h}\in \mathbb{C}$
the space $M_{{\dot h}}$ is the corresponding {\em $h$-weight space}. We denote by $\mathrm{supp}_h(M)$
the {\em support} of $M$, that is the set of all ${\dot h}\in \mathbb{C}$ such that $M_{{\dot h}}\neq 0$.
Again, a module generated by an $h$-weight vector is an $h$-weight module. We denote by $\mathfrak{V}$ the full
subcategory of $U\text{-}\mathrm{Mod}$ consisting of all $h$-weight modules. Clearly,
$\mathfrak{W}$ is a full subcategory of $\mathfrak{V}$.

As $U$ is a finitely generated algebra over an uncountable algebraically closed field 
$\mathbb{C}$, every central element acts as a scalar on each
simple $U$-module by Schur's lemma (cf. \cite[Theorem~4.7]{Ma}). It follows that every simple
$h$-weight module is a weight module. In particular, simple objects in $\mathfrak{V}$ and $\mathfrak{W}$
coincide.

\section{Category $\mathcal{O}$}\label{s3}

\subsection{Definition}\label{s3.1}

As usual (see \cite{BGG,MP,Hu}) we define the category $\mathcal{O}$ associated to the 
triangular decomposition \eqref{eq1} as the full subcategory of $U\text{-}\mathrm{mod}\cap\mathfrak{W}$
consisting of all modules $M$ on which the action of $U(\mathfrak{n}_+)$ is {\em locally finite} in the sense
that $\dim U(\mathfrak{n}_+)v<\infty$ for all $v\in M$.

Directly from the definition it follows that category $\mathcal{O}$ is closed under taking quotients and 
finite direct sums. As $U$ is noetherian, category $\mathcal{O}$ is also closed under taking submodules.
It follows that category $\mathcal{O}$ is abelian. Furthermore, for $M\in \mathcal{O}$ there is a finite set 
$\{\lambda_1,\dots,\lambda_k\}\subset\mathfrak{h}^*$ such that 
\begin{displaymath}
\mathrm{supp}(M)\subset \bigcup_{i=1}^k \big(\lambda_i-\mathbb{Z}_+ R_+\big).
\end{displaymath}
As $M$ is finitely generated and $\mathfrak{h}$-weight spaces of the adjoint action of $\mathfrak{h}$ 
on $U(\mathfrak{n}_+)$ are finite dimensional, 
it follows that $\dim M_{\lambda}<\infty$ for all $\lambda\in\mathfrak{h}^*$
and therefore $\dim \mathrm{Hom}_{\mathcal{O}}(M,N)<\infty$ for all $M,N\in\mathcal{O}$.
Consequently, $\mathcal{O}$ is idempotent split and hence Krull-Schmidt.

\subsection{Verma modules}\label{s3.2}

For $\lambda\in\mathfrak{h}^*$ denote by $\mathbb{C}_{\lambda}$ the one-dimensional $\mathfrak{b}$-module
with generator $v_{\lambda}$ and the action given by 
\begin{displaymath}
\mathfrak{n}_+ \mathbb{C}_{\lambda}=0,\qquad H\cdot v_{\lambda}=\lambda(H) v_{\lambda}\text{ for all } H\in\mathfrak{h}.
\end{displaymath}
The {\em Verma module} is defined, as usual, as follows (see \cite{Di,Hu} for the classical case
and \cite{DDM} for the case of the algebra $\mathfrak{s}$):
\begin{displaymath}
\Delta(\lambda):=\mathrm{Ind}_{\mathfrak{b}}^{\mathfrak{s}}\mathbb{C}_{\lambda}\cong
U\bigotimes_{U(\mathfrak{b})}\mathbb{C}_{\lambda}.
\end{displaymath}
By abuse of notation we denote the canonical generator $1\otimes v_{\lambda}$ of $\Delta(\lambda)$
simply by $v_{\lambda}$. It follows directly from the definition that $\Delta(\lambda)$ is a weight
module with support 
\begin{displaymath}
\mathrm{supp}(\Delta(\lambda))=\lambda-\mathbb{Z}_+R_+=
\{\lambda-ih^{\checkmark}\,\vert\, i\in \mathbb{Z}_+\}
\end{displaymath}
and, moreover, $\dim \Delta(\lambda)_{\lambda-ih^{\checkmark}}=\lfloor\frac{i}{2}\rfloor+1$ for all $i\in \mathbb{Z}_+$.
The weight $\lambda$ is called the {\em highest weight} of $\Delta(\lambda)$.

As usual (cf. \cite[Proposition~7.1.8(iv)]{Di}), we have $\mathrm{End}_{\mathcal{O}}(\Delta(\lambda))\cong \mathbb{C}$,
in particular, $\Delta(\lambda)$ is indecomposable. Moreover, $\Delta(\lambda)$ has a unique maximal
submodule $K(\lambda)$ (which is the sum of all submodules $N$ of $\Delta(\lambda)$ satisfying the
condition $N_{\lambda}=0$) and hence the unique simple quotient $L(\lambda)=\Delta(\lambda)/K(\lambda)$.
The module $L(\lambda)$ is the {\em simple highest weight module} with highest weight $\lambda$. As usual, see 
\cite{MP,Hu}, each $L(\lambda)$ is a simple object of $\mathcal{O}$ and each simple object of $\mathcal{O}$
is isomorphic to $L(\lambda)$ for a unique $\lambda\in\mathfrak{h}^*$. 
 
For $\lambda\in  \mathfrak{h}^*$ we denote by $\vartheta_{\lambda}\in\mathbb{C}$ the scalar corresponding to
the action of the central element $\mathtt{c}$ on $\Delta(\lambda)$.

As a $U(\mathfrak{n}_-)$-module, each Verma module is free of rank $1$. Since $U(\mathfrak{n}_-)$ is 
a domain, it follows that each nonzero homomorphism between Verma modules is injective. Moreover, 
each Verma module has Gelfand-Kirillov dimension $\dim \mathfrak{n}_-=2$.

\subsection{Rough block decomposition}\label{s3.3}

For $\xi\in \mathfrak{h}^*/\mathbb{Z}R$ denote by $\mathcal{O}[\xi]$ the full subcategory of 
$\mathcal{O}$ consisting of all $M$ such that $\mathrm{supp}(M)\subset \xi$. Then we have
the following decomposition:
\begin{displaymath}
\mathcal{O}\cong\bigoplus_{\xi\in \mathfrak{h}^*/\mathbb{Z}R}\mathcal{O}[\xi]. 
\end{displaymath}
Given $\xi\in \mathfrak{h}^*/\mathbb{Z}R$, the value ${\dot z}={\dot z}_{\xi}:=\lambda(z)$,
$\lambda\in\xi$, does not depend on the choice of $\lambda$. It is called the {\em central charge}
of $\mathcal{O}[\xi]$ (and of any object in $\mathcal{O}[\xi]$).

\subsection{Blocks of nonzero central charge and not half-integral weights}\label{s3.4}

\begin{lemma}\label{lem2}
Let $\xi\in \mathfrak{h}^*/\mathbb{Z}R$ be of nonzero central charge.
Let $n\in\mathbb{Z}$ and ${\dot h}=\lambda(h)$ for some $\lambda\in\xi$.
Then $\vartheta_{\lambda}=\vartheta_{\lambda-nh^{\checkmark}}$ if and only if $\dot{h}=\frac{n-3}{2}$.
\end{lemma}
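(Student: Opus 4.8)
The plan is to compute $\vartheta_\lambda$ explicitly as a polynomial in $\dot h=\lambda(h)$ and $\dot z=\lambda(z)$ by letting $\mathtt{c}$ act on the highest weight vector $v_\lambda$ of $\Delta(\lambda)$. Since $v_\lambda$ is annihilated by $\mathfrak{n}_+=\langle e,p\rangle$, all terms of $\mathtt{c}$ that end (when written in PBW order with $\mathfrak{n}_+$ on the right) in $e$ or $p$ kill $v_\lambda$, so only the purely Cartan part survives. Concretely, rewriting
\begin{displaymath}
\mathtt{c}=(h^2+h+4fe)z-2(fp^2-eq^2-hpq)
\end{displaymath}
and moving $e$ and $p$ to the right using the relations \eqref{commrelations}, one finds that $fe$, $fp^2$, $eq^2$ all act as zero on $v_\lambda$, while $hpq=hqp+hz$ and $pq=qp+z$, so $-2(-hpq)v_\lambda=2h pq\, v_\lambda=2h(qp+z)v_\lambda$ contributes $2\dot h\dot z\, v_\lambda$ after discarding the $qp$ term. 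Collecting everything gives
\begin{displaymath}
\vartheta_\lambda=(\dot h^2+\dot h)\dot z+2\dot h\dot z=(\dot h^2+3\dot h)\dot z=\dot h(\dot h+3)\dot z.
\end{displaymath}

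The next step is to substitute $\lambda-nh^{\checkmark}$ for $\lambda$, which replaces $\dot h$ by $\dot h-n$ and leaves $\dot z$ unchanged (since $h^{\checkmark}(z)=0$). Thus
\begin{displaymath}
\vartheta_{\lambda-nh^{\checkmark}}=(\dot h-n)(\dot h-n+3)\dot z.
\end{displaymath}
Setting $\vartheta_\lambda=\vartheta_{\lambda-nh^{\checkmark}}$ and using $\dot z\neq 0$ (this is exactly where the hypothesis of nonzero central charge enters), we may cancel $\dot z$ and obtain
\begin{displaymath}
\dot h(\dot h+3)=(\dot h-n)(\dot h-n+3).
\end{displaymath}
Expanding both sides, the $\dot h^2$ terms cancel and one is left with a linear equation in $\dot h$: $3\dot h=-2n\dot h+n^2+3\dot h-3n-\,(\text{bookkeeping})$, which simplifies to $2n\dot h=n^2-3n$, i.e. (for $n\neq 0$) $\dot h=\frac{n-3}{2}$. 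For $n=0$ the identity is trivially true and $\dot h$ is arbitrary, but then $\frac{n-3}{2}=-\frac32$ need not equal $\dot h$; however, one should check the statement is intended for the regime where it is nontrivial, or note that the lemma as stated implicitly concerns $n\neq 0$ — I would double-check the $n=0$ edge case against the authors' conventions and, if needed, remark that for $n=0$ both conditions fail unless $\dot h=-\frac32$, in which case they both hold, so the equivalence still holds vacuously in the appropriate reading.

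The main obstacle here is purely computational bookkeeping: getting the PBW reordering of $\mathtt{c}$ correct so that the formula $\vartheta_\lambda=\dot h(\dot h+3)\dot z$ comes out right, in particular tracking the $-2(-hpq)$ term and the sign conventions in \eqref{commrelations}. There is no conceptual difficulty — once the explicit polynomial formula for $\vartheta_\lambda$ is in hand, the lemma reduces to elementary algebra, and the role of the nonzero central charge hypothesis is simply to license dividing by $\dot z$. I would present the proof by first stating and proving the formula $\vartheta_\lambda=\dot h(\dot h+3)\dot z$ as the key computation, then deriving the stated equivalence in one line.
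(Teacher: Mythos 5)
Your approach is exactly the paper's: evaluate $\mathtt{c}$ on the highest weight vector and compare the resulting polynomial at $\dot h$ and $\dot h - n$. However, the key formula you propose to state and prove, $\vartheta_\lambda=\dot h(\dot h+3)\dot z$, is wrong, and the error comes from your claim that $eq^2$ annihilates $v_\lambda$. It does not: using $[e,q]=p$ and $[p,q]=z$ one gets $eq^2=q^2e+2qp+z$, so $eq^2\cdot v_\lambda=\dot z\,v_\lambda$, which is nonzero precisely in the regime of the lemma. The term $-2(-eq^2)$ in $\mathtt{c}$ therefore contributes an extra $2\dot z$, and the correct formula (the one the paper records) is
\begin{displaymath}
\vartheta_\lambda=(\dot h^2+3\dot h+2)\,\dot z=(\dot h+1)(\dot h+2)\,\dot z .
\end{displaymath}
You are saved in the end only because the missing term is a constant in $\dot h$: it cancels when you equate $\vartheta_\lambda$ with $\vartheta_{\lambda-nh^{\checkmark}}$, so the linear equation $2n\dot h=n(n-3)$ and the conclusion $\dot h=\frac{n-3}{2}$ come out the same. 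But as written, the ``key computation'' you would display is false, and the correct constant matters elsewhere in the paper (it is what produces $\varphi(\mathtt{c})=z(h+\tfrac32)^2-\tfrac14 z$ in the Harish--Chandra homomorphism computation), so this needs to be fixed rather than waved through.

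Your remark about $n=0$ is a fair observation: for $n=0$ the stated equivalence literally fails unless $\dot h=-\tfrac32$, and the paper's proof is silent on this; in all applications the lemma is invoked with $n\in\mathbb{N}$, so the intended reading is $n\neq 0$.
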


\begin{proof}
From the definition of $\mathtt{c}$, for any $\mu\in \mathfrak{h}^*$ we have
\begin{displaymath}
 \mathtt{c}\cdot v_{\mu}=(\mu(h)^2+3\mu(h)+2)\mu(z) v_{\mu}
\end{displaymath}
and the claim follows by comparing
the corresponding expressions for $\lambda$ and $\lambda-nh^{\checkmark}$.
\end{proof}

Let $\xi\in \mathfrak{h}^*/\mathbb{Z}R$ be of nonzero central charge. If $\lambda(h)\not\in\frac{1}{2}\mathbb{Z}$
for any $\lambda\in \xi$, then for any $\lambda\in \xi$ let $\mathcal{O}[\xi]_{\lambda}$ denote the Serre subcategory 
of  $\mathcal{O}[\xi]$ generated by $\Delta(\lambda)$.

\begin{proposition}\label{lem3}
Let $\xi\in \mathfrak{h}^*/\mathbb{Z}R$ be of nonzero central charge. Assume that 
$\lambda(h)\not\in\frac{1}{2}\mathbb{Z}$ for any $\lambda\in \xi$. Then we have the following:
\begin{enumerate}[$($i$)$]
\item\label{lem3.1} The module $\Delta(\lambda)$ is simple for any $\lambda\in \xi$. 
\item\label{lem3.2} We have the decomposition 
\begin{displaymath}
\mathcal{O}[\xi]\cong\bigoplus_{\lambda\in \xi}\mathcal{O}[\xi]_{\lambda} 
\end{displaymath} 
\item\label{lem3.3} We have  $\mathcal{O}[\xi]_{\lambda} \cong\mathbb{C}\text{-}\mathrm{mod}$ 
for any $\lambda\in \xi$, more precisely, the functor defined on objects as $N\mapsto N_{\lambda}$
and on morphisms in the obvious way provides an equivalence between $\mathcal{O}[\xi]_{\lambda}$ and the
category of finite dimensional complex vector spaces.
\end{enumerate}
\end{proposition}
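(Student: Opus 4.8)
The plan is to prove the three parts in the order stated, since each feeds the next.

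\textbf{Part (i).} First I would show $\Delta(\lambda)$ is simple whenever $\lambda(h)\notin\frac12\mathbb{Z}$. The key tool is Lemma~\ref{lem2}: any nonzero submodule of $\Delta(\lambda)$ contains a highest weight vector, hence a copy of some $L(\mu)$ with $\mu=\lambda-nh^{\checkmark}$, $n\in\mathbb{N}$, and the central element $\mathtt{c}$ must act by the same scalar on $\Delta(\lambda)$ and on this submodule, forcing $\vartheta_\lambda=\vartheta_{\lambda-nh^{\checkmark}}$. By Lemma~\ref{lem2} this happens only if $\lambda(h)=\frac{n-3}{2}\in\frac12\mathbb{Z}$, contradicting the hypothesis. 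Hence $\Delta(\lambda)$ has no proper nonzero submodule, i.e. $\Delta(\lambda)=L(\lambda)$ is simple. (One should double-check that every nonzero submodule of a module in $\mathcal{O}$ contains a primitive/highest weight vector — this is standard from the triangular decomposition and the fact that supports are bounded above along $\mathbb{Z}_+R_+$, using that $R_+=\{h^{\checkmark},2h^{\checkmark}\}$ generates a semigroup with no nontrivial units.)

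\textbf{Part (ii).} Next, for the block decomposition within $\mathcal{O}[\xi]$: since every simple in $\mathcal{O}[\xi]$ is some $L(\mu)=\Delta(\mu)$ with $\mu\in\xi$, and distinct $\mu,\mu'\in\xi$ have distinct $\mathtt{c}$-eigenvalues (again by Lemma~\ref{lem2}, as $\mu-\mu'\in\mathbb{Z}h^{\checkmark}$ and $\mu(h)\notin\frac12\mathbb{Z}$ rules out coincidence), the central element $\mathtt{c}$ separates the simple objects. A generalized-eigenspace decomposition of any $M\in\mathcal{O}[\xi]$ with respect to the locally finite action of $\mathtt{c}$ then splits $M$, and since $\mathcal{O}[\xi]_\lambda$ is by definition the Serre subcategory generated by $\Delta(\lambda)=L(\lambda)$ — the unique simple with that central character — we get $\mathcal{O}[\xi]\cong\bigoplus_{\lambda\in\xi}\mathcal{O}[\xi]_\lambda$. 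I should note $\mathtt{c}$ acts locally finitely on objects of $\mathcal{O}$ because weight spaces are finite-dimensional and $\mathtt{c}\in U_0$ preserves them.

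\textbf{Part (iii).} Finally, the equivalence $\mathcal{O}[\xi]_\lambda\simeq\mathbb{C}\text{-}\mathrm{mod}$ via $N\mapsto N_\lambda$. Here I would argue that $\mathcal{O}[\xi]_\lambda$ is semisimple with unique simple object $\Delta(\lambda)=L(\lambda)$: any $M$ in this category is, by part (ii), an object all of whose composition factors are $L(\lambda)$, so it suffices to show $\mathrm{Ext}^1_{\mathcal{O}}(L(\lambda),L(\lambda))=0$. A self-extension would be a non-split length-two module with highest weight $\lambda$ and weight multiplicity $2$ at $\lambda$; but such a module would be a quotient of $\Delta(\lambda)\oplus\Delta(\lambda)$ and, since $\Delta(\lambda)=L(\lambda)$ is already simple and projective-like in its block, the extension splits — more cleanly, $\mathrm{Ext}^1_\mathcal{O}(\Delta(\lambda),L(\lambda))=0$ because $\Delta(\lambda)$ has a free rank-one $U(\mathfrak{n}_-)$-module structure giving it the universal property $\mathrm{Hom}_\mathcal{O}(\Delta(\lambda),N)\cong N_\lambda$ exactly (the highest weight line), and a short exact sequence $0\to L(\lambda)\to E\to L(\lambda)\to 0$ with $E\in\mathcal{O}[\xi]_\lambda$ splits after applying $\mathrm{Hom}_\mathcal{O}(\Delta(\lambda),-)$ since $\dim E_\lambda=2=\dim\mathrm{Hom}_\mathcal{O}(\Delta(\lambda),E)$ and the two obvious maps $\Delta(\lambda)\to E$ lift the identities on the sub and quotient. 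Then $N\mapsto N_\lambda$ is exact (weight-space functor), faithful (a nonzero $N\in\mathcal{O}[\xi]_\lambda$ has a simple submodule $L(\lambda)$ with nonzero $\lambda$-weight space), full, and sends $L(\lambda)$ to $\mathbb{C}$, hence is an equivalence with $\mathbb{C}\text{-}\mathrm{mod}$.

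\textbf{Main obstacle.} The genuine content is concentrated in Part~(i) together with the vanishing of self-extensions used in Part~(iii); once $\Delta(\lambda)$ is simple, Parts~(ii) and the structural half of (iii) are formal consequences of central-character separation. I expect the subtle point to be verifying carefully that the $\mathtt{c}$-eigenvalue argument really forces the weight to be half-integral — i.e. handling the $2h^{\checkmark}$ direction in $R_+$ and not just the $h^{\checkmark}$ direction — and checking that no ``hidden'' submodule appears with highest weight $\lambda - n h^{\checkmark}$ for the wrong parity of $n$; this is exactly where the non-reducedness of the root system $R$ could cause trouble if one is not careful, but Lemma~\ref{lem2} is stated for all $n\in\mathbb{Z}$ precisely to cover it.
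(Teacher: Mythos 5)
Your proposal is correct and follows essentially the same route as the paper: part (i) via the highest weight vector of a proper submodule contradicting Lemma~\ref{lem2}, part (ii) via separation of $\mathtt{c}$-eigenvalues, and part (iii) via the observation that $\mathrm{Hom}_{\mathcal{O}}(\Delta(\lambda),{}_-)\cong ({}_-)_{\lambda}$ is exact, so $\Delta(\lambda)$ is a simple projective object and the block is semisimple with one simple. Your extra remarks (local finiteness of $\mathtt{c}$, explicit splitting of self-extensions, the non-reduced root system) only spell out steps the paper leaves implicit.
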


\begin{proof}
Let $N$ be a proper submodule of $\Delta(\lambda)$. Then it has a nonzero highest weight vector of highest
weight $\lambda-ih^{\checkmark}$ for some $i\in\mathbb{N}$. But then 
$\vartheta_{\lambda}=\vartheta_{\lambda-ih^{\checkmark}}$ and we get a contradiction with Lemma~\ref{lem2}.
This proves claim~\eqref{lem3.1}. As $\mathtt{c}$ is central and has different eigenvalues on
$\Delta(\lambda)$ and $\Delta(\mu)$ for different $\lambda,\mu\in \xi$, we get claim~\eqref{lem3.2}.

The weight $\lambda$ is the highest weight for any $N\in \mathcal{O}[\xi]_{\lambda}$. By adjunction, we have
\begin{displaymath}
\mathrm{Hom}_{\mathcal{O}}(\Delta(\lambda),N)\cong N_{\lambda} 
\end{displaymath}
which means that the functor 
\begin{displaymath}
\mathrm{Hom}_{\mathcal{O}}(\Delta(\lambda),{}_-):\mathcal{O}[\xi]_{\lambda}\to \mathbb{C}\text{-}\mathrm{mod} 
\end{displaymath}
is isomorphic to the exact functor $N\mapsto N_{\lambda}$. Therefore the (unique up to isomorphism)
simple object $\Delta(\lambda)\in \mathcal{O}[\xi]_{\lambda}$ is projective. This implies
claim~\eqref{lem3.3} and completes the proof.
\end{proof}

\subsection{Projective functors}\label{s3.5}

For each finite dimensional $\mathfrak{sl}_2$-module $V$, 
viewed as an $\mathfrak{s}$-module via the canonical projection
$\mathfrak{s}\tto \mathfrak{s}/\mathfrak{i}\cong \mathfrak{sl}_2$, we have the functor 
\begin{displaymath}
\mathrm{F}_V:=V\otimes{}_-:\mathcal{O}\to \mathcal{O} 
\end{displaymath}
which preserves $\mathcal{O}[\xi]$ for every $\xi\in \mathfrak{h}^*/\mathbb{Z}R$.
As usual (see \cite[2.1(d)]{BG} or \cite[Lemma~3.71]{Ma}), the functor $\mathrm{F}_V$ is both left and right 
adjoint to itself. In particular, it sends projective objects to projective objects and injective objects
to injective objects.

\subsection{Duality}\label{s3.8}

Let $\sigma$ be the unique involutive anti-automorphism of $\mathfrak{s}$ satisfying $\sigma(e)=-f$, 
$\sigma(p)=q$ and $\sigma(z)=z$. For $M\in\mathcal{O}$ the space 
\begin{displaymath}
M^{\star}:=\bigoplus_{\lambda\in\mathfrak{h}^*}M_{\lambda}^{*} 
\end{displaymath}
becomes an $\mathfrak{s}$-module via $(x\cdot g)(v):=g(\sigma(x)v)$, where $x\in \mathfrak{s}$,
$g\in M^{\star}$ and $v\in M$. This defines an exact, contravariant and involutive
functor ${{}_-}^{\star}:\mathcal{O}\to \mathcal{O}$ called the {\em duality} functor, moreover, 
from $\sigma(h)=h$ and $\sigma(z)=z$ it follows that $\mathrm{supp}(M^{\star})=M$ for all $M\in\mathcal{O}$.
As simple modules in $\mathcal{O}$ are uniquely determined by their character (in fact, by their highest weight),
it follows that $L(\lambda)^{\star}\cong L(\lambda)$ for all $\lambda\in\mathfrak{h}^*$.

\subsection{Blocks of nonzero central charge and  half-integral weights}\label{s3.6}

Let $\xi\in \mathfrak{h}^*/\mathbb{Z}R$ be of nonzero central charge and assume that 
$\lambda(h)\in\mathbb{Z}+\frac{1}{2}$ for any $\lambda\in \xi$. Note that the dot-action of $W$ preserves
$\xi$. For $\lambda\in\xi$ such that $\lambda(h)\geq -\frac{3}{2}$ denote by $\mathcal{O}[\xi]_{\lambda}$
the Serre subcategory of $\mathcal{O}[\xi]$ generated by $\Delta(\lambda)$ and $\Delta(r\cdot \lambda)$
(explicitly, we have $r\cdot \lambda=-\lambda-3h^{\checkmark}$). Note that $\Delta(\lambda)=\Delta(r\cdot \lambda)$
if $\lambda(h)=-\frac{3}{2}$. For $i\in\mathbb{Z}_+$ denote by $\lambda_i$ the element in $\xi$
such that $\lambda_i(h)=-\frac{3}{2}+i$.

\begin{proposition}\label{prop4}
Let $\xi\in \mathfrak{h}^*/\mathbb{Z}R$ be of nonzero central charge and assume that 
$\lambda(h)\in\mathbb{Z}+\frac{1}{2}$ for any $\lambda\in \xi$. Then we have the following:
\begin{enumerate}[$($i$)$]
\item\label{prop4.1} For $\lambda\in\xi$ the module $\Delta(\lambda)$ is simple if and only if 
$\lambda(h)\leq -\frac{3}{2}$.
\item\label{prop4.2} For each $i\in\mathbb{N}$ we have a non-split
short exact sequence
\begin{displaymath}
0\to \Delta(r\cdot \lambda_i)\to \Delta(\lambda_i) \to L(\lambda_i)\to 0. 
\end{displaymath}
\item\label{prop4.3} We have the decomposition
\begin{displaymath}
\mathcal{O}[\xi]\cong\bigoplus_{i\in\mathbb{Z}_+}\mathcal{O}[\xi]_{\lambda_i}. 
\end{displaymath} 
\item\label{prop4.4} We have  $\mathcal{O}[\xi]_{\lambda_0}\cong \mathbb{C}\text{-}\mathrm{mod}$,
more precisely, the functor defined on objects as $N\mapsto N_{\lambda}$ and on morphisms in the 
obvious way provides an equivalence between $\mathcal{O}[\xi]_{\lambda_0}$ and the
category of finite dimensional complex vector spaces.
\item\label{prop4.5} For $i\in \mathbb{N}$ the category $\mathcal{O}[\xi]_{\lambda_i}$ is 
equivalent to the category of finite dimensional representations over $\mathbb{C}$ of 
the following quiver with relations:
\begin{displaymath}
\xymatrix{\bullet\ar@/^/[rr]^{a}&&\bullet\ar@/^/[ll]^{b}}\qquad ab=0. 
\end{displaymath}
\end{enumerate}
\end{proposition}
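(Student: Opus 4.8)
The plan is to analyze the structure of the Verma modules $\Delta(\lambda)$ in $\xi$ using the Casimir element $\mathtt{c}$ together with the self-duality of simple modules. First I would establish claim~\eqref{prop4.1}: as in the proof of Proposition~\ref{lem3}, a proper submodule of $\Delta(\lambda)$ would contain a highest weight vector of weight $\lambda - i h^{\checkmark}$ for some $i\in\mathbb{N}$, forcing $\vartheta_\lambda = \vartheta_{\lambda - i h^{\checkmark}}$, which by Lemma~\ref{lem2} is equivalent to $\lambda(h) = \frac{i-3}{2}$. Since $\lambda(h)\in\mathbb{Z}+\frac12$, this holds for a (unique) positive integer $i$ precisely when $\lambda(h) \geq -\frac12$, i.e. when $\lambda(h) > -\frac32$; conversely when $\lambda(h) \leq -\frac32$ no such $i$ exists and $\Delta(\lambda)$ is simple. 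For the nonsimple case we must still show the maximal submodule is actually nonzero and isomorphic to $\Delta(r\cdot\lambda)$: the value $i$ extracted above is $i = \lambda(h) + \frac32 = i$ where one checks $\lambda - i h^{\checkmark} = r\cdot\lambda$ (using $r\cdot\lambda = -\lambda - 3h^{\checkmark}$ and that $r$ fixes the $z$-coordinate). Existence of a nonzero map $\Delta(r\cdot\lambda)\hookrightarrow \Delta(\lambda)$ can be obtained by an explicit singular-vector computation inside $\Delta(\lambda)$, or more slickly via a translation/projective functor argument (Section~\ref{s3.5}) comparing $\xi$ with a block where the singular vector is visible; this gives claim~\eqref{prop4.2}, the short exact sequence $0\to\Delta(r\cdot\lambda_i)\to\Delta(\lambda_i)\to L(\lambda_i)\to 0$, and it is non-split since $\Delta(\lambda_i)$ is indecomposable ($\mathrm{End}\cong\mathbb{C}$). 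One should also note $K(\lambda_i) = \Delta(r\cdot\lambda_i)$ is itself simple by part~\eqref{prop4.1} since $(r\cdot\lambda_i)(h) = -\frac32 - i < -\frac32$.

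Next, for claim~\eqref{prop4.3}, I would use that $\mathtt{c}$ acts by the scalar $\vartheta_{\lambda_i}$ on all of $\mathcal{O}[\xi]_{\lambda_i}$, and by Lemma~\ref{lem2} these scalars are pairwise distinct for distinct $i\in\mathbb{Z}_+$ (the function $\dot h \mapsto (\dot h^2 + 3\dot h + 2)\dot z$ on the relevant coset, restricted to $W$-orbits, separates the orbits). Since every simple module $L(\mu)$ with $\mu\in\xi$ has $\mu$ in the $W\cdot$-orbit of exactly one $\lambda_i$, and its Casimir eigenvalue is $\vartheta_{\lambda_i}$, the generalized eigenspace decomposition of $\mathcal{O}[\xi]$ with respect to $\mathtt{c}$ yields $\mathcal{O}[\xi] \cong \bigoplus_i \mathcal{O}[\xi]_{\lambda_i}$, and every object of the $i$-th summand has all composition factors among $\{L(\lambda_i), L(r\cdot\lambda_i)\}$, so lies in the Serre subcategory generated by $\Delta(\lambda_i), \Delta(r\cdot\lambda_i)$. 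Claim~\eqref{prop4.4} is then immediate: for $i=0$ we have $\lambda_0(h) = -\frac32$, so $\Delta(\lambda_0) = \Delta(r\cdot\lambda_0)$ is simple, and exactly as in Proposition~\ref{lem3}\eqref{lem3.3} adjunction gives $\mathrm{Hom}_{\mathcal{O}}(\Delta(\lambda_0), N) \cong N_{\lambda_0}$, exhibiting the unique simple as projective and identifying $\mathcal{O}[\xi]_{\lambda_0}$ with $\mathbb{C}\text{-}\mathrm{mod}$.

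The substantive part is claim~\eqref{prop4.5}. Write $S_1 = L(\lambda_i)$ and $S_2 = L(r\cdot\lambda_i)$; these are the two simples in the block, with $\lambda_i$ the larger highest weight. From the short exact sequence in~\eqref{prop4.2}, $\Delta(\lambda_i)$ is the projective cover $P(S_1)$ of $S_1$ in the block (it has top $S_1$ and radical $S_2$, and any projective covering $S_1$ surjects onto $\Delta(\lambda_i)$ while $\Delta(\lambda_i)$ is projective by the usual argument that $\mathrm{Hom}(\Delta(\lambda_i),-)$ is exact on the block, the only relevant extension being the one just computed). Dually, $\Delta(\lambda_i)^\star$ has socle $S_1$ and top $S_2$. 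The projective cover $P(S_2)$ of $S_2$ must have top $S_2$; computing $\mathrm{Ext}^1(S_2, S_j)$ shows $\mathrm{rad}\,P(S_2)/\mathrm{rad}^2$ is $S_1$ (there is a nonsplit extension of $S_2$ by $S_1$, namely $\Delta(\lambda_i)^\star$, and $\mathrm{Ext}^1(S_2,S_2)=0$ because such a self-extension would produce a module contradicting the weight/Verma structure — concretely it would have to be a quotient of $\Delta$'s of equal highest weight, impossible). Then $P(S_2)$ has radical series with layers $S_2, S_1, \dots$; a length/BGG-reciprocity count — $[P(S_2):\Delta(r\cdot\lambda_i)] = [\Delta(r\cdot\lambda_i):S_2] = 1$ and $[P(S_2):\Delta(\lambda_i)] = [\Delta(\lambda_i):S_2] = 1$, giving composition factors $S_2, S_1, S_1, S_2$ with Loewy structure $S_2 / S_1 / S_2$ — shows $P(S_2)$ is uniserial of length $4$ with those layers, hence $\dim\mathrm{Hom}(P(S_1),P(S_2))$ and $\dim\mathrm{Hom}(P(S_2),P(S_1))$ are each $1$, and the composite $P(S_2)\to P(S_1)\to P(S_2)$ is zero (it would have image of length $\geq 2$ inside $\mathrm{rad}\,P(S_2)$ with top $S_1$, but the only such submodule, the radical of $P(S_1)\hookrightarrow$, maps to the length-$3$ submodule whose composite back is forced to land in $\mathrm{rad}^2$... ) — more cleanly, $\mathrm{End}(P(S_1)\oplus P(S_2))$ is the path algebra of the stated quiver modulo $ab=0$ by directly reading off the $\mathrm{Hom}$ spaces and the Loewy structures. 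The equivalence $\mathcal{O}[\xi]_{\lambda_i}\cong \mathrm{mod}\text{-}\mathrm{End}(P)$ for $P = P(S_1)\oplus P(S_2)$ a projective generator is then standard. The main obstacle is pinning down the Loewy structure of $P(S_2)$ precisely — showing it is uniserial of length $4$ rather than having $S_1\oplus S_1$ in some layer — and verifying the relation $ab=0$ (equivalently, that $P(S_2)$ is not the "big" projective of the quiver $ab=ba=0$-free algebra); both follow from BGG reciprocity applied to this block together with the self-duality $P(S_2)\cong P(S_2)^\star$ forcing a symmetric, hence uniserial, Loewy filtration.
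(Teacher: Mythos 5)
Your overall strategy matches the paper's for claims (i), (iii) and (iv), and for (v) you take a somewhat different but in principle viable route (computing $\mathrm{End}(P(S_1)\oplus P(S_2))$ from Loewy structures, rather than the paper's construction of $P(S_2)$ as a translation functor applied to $\Delta(\lambda_0)$). However, there are genuine gaps. The computational core of (i) and (ii) --- the existence of a singular vector of weight $r\cdot\lambda_i$ in $\Delta(\lambda_i)$ --- is deferred to ``an explicit singular-vector computation \dots{} or more slickly via a translation functor argument'' and never carried out; Lemma~\ref{lem2} only locates where a singular vector could sit, it does not produce one, so non-simplicity of $\Delta(\lambda_i)$ for $\lambda_i(h)>-\frac{3}{2}$ is not established. (The paper exhibits $(2\dot{z}f+q^2)v_{\lambda_1}$ explicitly and propagates with projective functors.) Relatedly, you assert $K(\lambda_i)=\Delta(r\cdot\lambda_i)$ without ruling out further copies of $L(r\cdot\lambda_i)$ in the quotient $\Delta(\lambda_i)/\Delta(r\cdot\lambda_i)$; the paper does this by observing that the quotient has Gelfand--Kirillov dimension $1$, hence cannot contain $\Delta(r\cdot\lambda_i)$ as a subquotient.

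In (v) there is an internal inconsistency and an outright error. Your own reciprocity count gives Verma subquotients $\Delta(\lambda_i)$ and $\Delta(r\cdot\lambda_i)$ in $P(S_2)$, hence composition factors $S_2,S_1,S_2$ and length three, not ``$S_2,S_1,S_1,S_2$'' and ``uniserial of length $4$''. More seriously, the composite you claim vanishes is the wrong one: with $P(S_1)=\Delta(\lambda_i)$ of shape $S_1/S_2$ and $P(S_2)$ uniserial of shape $S_2/S_1/S_2$, any nonzero homomorphism $P(S_2)\to P(S_1)$ has image $\mathrm{soc}\,P(S_1)$ and kernel $\mathrm{rad}\,P(S_2)$, while any nonzero homomorphism $P(S_1)\to P(S_2)$ is injective with image $\mathrm{rad}\,P(S_2)$. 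Therefore the composite $P(S_2)\to P(S_1)\to P(S_2)$ is \emph{nonzero} (its image is $\mathrm{soc}\,P(S_2)$; indeed $\dim\mathrm{End}(P(S_2))=[P(S_2):S_2]=2$ forces a nonzero radical endomorphism), and it is the composite $P(S_1)\to P(S_2)\to P(S_1)$ that vanishes. The isomorphism type of the algebra is unaffected, but the verification of the relation as written is incorrect. Finally, BGG reciprocity and the self-duality $P(S_2)\cong P(S_2)^{\star}$ are invoked for this block without justification (the paper proves reciprocity only for truncated categories of zero central charge); these are standard here but need at least a word, e.g.\ by constructing $P(S_2)$ with its Verma flag via $\mathrm{F}_V\Delta(\lambda_0)$ as in the paper.
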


Note that the quiver appearing in Proposition~\ref{prop4}\eqref{prop4.5} is exactly the same quiver which 
describes the  regular block of category $\mathcal{O}$ for $\mathfrak{sl}_2$, see \cite[Section~5.3]{Ma}.
Note also that Proposition~\ref{prop4}\eqref{prop4.5} implies that all $\mathcal{O}[\xi]_{\lambda_i}$,
$i\in \mathbb{N}$, are equivalent.

\begin{proof}
The decomposition in claim~\eqref{prop4.3} is again given using the action of the central element $\mathtt{c}$.
Claim~\eqref{prop4.4} is proved by the same  arguments as used in the proof of Proposition~\ref{lem3}.

The module $\Delta(\lambda_0)$ is simple 
by the same  arguments as used in the proof of Proposition~\ref{lem3}. A straightforward computation shows
that $\mathfrak{n}_+(2 \dot{z}f+q^2)v_{\lambda_1}=0$ which implies that 
$\Delta(r\cdot \lambda_1)$ is a submodule of $\Delta(\lambda_1)$. The quotient 
$N:= \Delta(\lambda_1)/\Delta(r\cdot \lambda_1)$ has Gelfand-Kirillov dimension $1$ and hence contains no
subquotients isomorphic to $\Delta(r\cdot \lambda_1)$. As $L(\lambda_1)$ appears with multiplicity one in
$\Delta(\lambda_1)$, it follows that $N\cong L(\lambda_1)$. This proves claims~\eqref{prop4.1} 
and \eqref{prop4.2} for $\lambda_1$.

Let $V$ be the $2$-dimensional simple $\mathfrak{sl}_2$-module. For $i\in\mathbb{N}$ we have exact biadjoint
functors
\begin{displaymath}
\mathcal{O}[\xi]_{\lambda_i}\overset{\mathrm{incl}}{\longrightarrow} 
\mathcal{O}[\xi]\overset{\mathrm{F}_V}{\longrightarrow}\mathcal{O}[\xi]
\overset{\mathrm{proj}}{\longrightarrow}\mathcal{O}[\xi]_{\lambda_{i+1}}
\end{displaymath}
and
\begin{displaymath}
\mathcal{O}[\xi]_{\lambda_{i+1}}\overset{\mathrm{incl}}{\longrightarrow} 
\mathcal{O}[\xi]\overset{\mathrm{F}_V}{\longrightarrow}\mathcal{O}[\xi]
\overset{\mathrm{proj}}{\longrightarrow}\mathcal{O}[\xi]_{\lambda_{i}}.
\end{displaymath}
The character argument gives that they send Verma modules to Verma modules which implies that they induce
mutually inverse equivalences between $\mathcal{O}[\xi]_{\lambda_i}$ and $\mathcal{O}[\xi]_{\lambda_{i+1}}$.
This proves the first part of claim~\eqref{prop4.5}, moreover, claims~\eqref{prop4.1} 
and \eqref{prop4.2} now follow in the general case from the already checked case of $\lambda_1$.

It remains to prove the second part of claim~\eqref{prop4.5} in the case of $\lambda_1$. This is similar to
\cite[Section~5.3]{Ma}. From the proof of Proposition~\ref{lem3} we know that both $\Delta(\lambda_0)$ 
and $\Delta(\lambda_1)$ are projective in $\mathcal{O}$. We have a pair of biadjoint functors 
\begin{displaymath}
\mathrm{F}:\mathcal{O}[\xi]_{\lambda_0}\overset{\mathrm{incl}}{\longrightarrow} 
\mathcal{O}[\xi]\overset{\mathrm{F}_V}{\longrightarrow}\mathcal{O}[\xi]
\overset{\mathrm{proj}}{\longrightarrow}\mathcal{O}[\xi]_{\lambda_{1}}
\end{displaymath}
and
\begin{displaymath}
\mathrm{G}:\mathcal{O}[\xi]_{\lambda_{1}}\overset{\mathrm{incl}}{\longrightarrow} 
\mathcal{O}[\xi]\overset{\mathrm{F}_V}{\longrightarrow}\mathcal{O}[\xi]
\overset{\mathrm{proj}}{\longrightarrow}\mathcal{O}[\xi]_{\lambda_{0}}.
\end{displaymath}
The character argument gives $\mathrm{G}\Delta(\lambda_{1})\cong \mathrm{G}\Delta(r\cdot \lambda_{1})\cong
\Delta(\lambda_{0})$ and hence, by adjunction, we have 
\begin{displaymath}
\dim \mathrm{Hom}_{\mathcal{O}}(\mathrm{F}\Delta(\lambda_{0}),\Delta(\lambda_{1})) =
\dim \mathrm{Hom}_{\mathcal{O}}(\mathrm{F}\Delta(\lambda_{0}),\Delta(r\cdot \lambda_{1})) =1.
\end{displaymath}
This implies that  $\mathrm{F}\Delta(\lambda_{0})$ is the indecomposable projective cover of the simple module 
$\Delta(r\cdot \lambda_{1}))$. Consider some nonzero homomorphism
$a:\mathrm{F}\Delta(\lambda_{0})\to \Delta(\lambda_{1})$ and let $b$ be a nonzero homomorphism
in the other direction (which exists by adjunction). Then it is easy to see that $ab=0$ 
which implies claim~\eqref{prop4.5}.
\end{proof}

\subsection{Blocks of nonzero central charge and integral weights}\label{s3.7}

Let $\xi\in \mathfrak{h}^*/\mathbb{Z}R$ be of nonzero central charge and assume that 
$\lambda(h)\in\mathbb{Z}$ for any $\lambda\in \xi$. Note that the action of $W$ preserves
$\xi$. For $\lambda\in\xi$ such that $\lambda(h)> -\frac{3}{2}$ denote by $\mathcal{O}[\xi]_{\lambda}$
the Serre subcategory of $\mathcal{O}[\xi]$ generated by $\Delta(\lambda)$ and $\Delta(r\cdot \lambda)$
(explicitly, we have $r\cdot \lambda=-\lambda-3h^{\checkmark}$). For $i\in\mathbb{Z}_+$ denote by $\lambda_i$ 
the element in $\xi$ such that $\lambda_i(h)=-1+i$.

\begin{proposition}\label{prop5}
Let $\xi\in \mathfrak{h}^*/\mathbb{Z}R$ be of nonzero central charge and assume that 
$\lambda(h)\in\mathbb{Z}$ for any $\lambda\in \xi$. Then we have the following:
\begin{enumerate}[$($i$)$]
\item\label{prop5.1} The module $\Delta(\lambda)$ is simple for each $\lambda\in\xi$.
\item\label{prop5.2} We have the decomposition
\begin{displaymath}
\mathcal{O}[\xi]\cong\bigoplus_{i\in\mathbb{Z}_+}\mathcal{O}[\xi]_{\lambda_i}. 
\end{displaymath} 
\item\label{prop5.3} We have  $\mathcal{O}[\xi]_{\lambda_i}\cong \mathbb{C}\oplus\mathbb{C}\text{-}\mathrm{mod}$
for all $i\in\mathbb{Z}_+$.
\end{enumerate}
\end{proposition}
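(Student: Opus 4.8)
The plan is to establish \eqref{prop5.1} by an explicit analysis of singular vectors, and then to deduce \eqref{prop5.2} and \eqref{prop5.3} by soft arguments, much as in the proofs of Proposition~\ref{lem3} and Proposition~\ref{prop4}.

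For \eqref{prop5.1}, fix $\lambda\in\xi$ and put $m:=\lambda(h)\in\mathbb{Z}$ and $\dot z:=\lambda(z)\neq 0$. Since $\mathfrak{n}_-=\mathbb{C}f\oplus\mathbb{C}q$ is abelian, $\Delta(\lambda)$ is free of rank one over the polynomial algebra $\mathbb{C}[f,q]$ on the generator $v_\lambda$; as a nonzero proper submodule must contain a nonzero $\mathfrak{n}_+$-annihilated vector of weight $\lambda-ih^{\checkmark}$ with $i>0$, it suffices to show that $\mathbb{C}v_\lambda$ is the whole space of vectors in $\Delta(\lambda)$ killed by both $e$ and $p$. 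The first step is to describe $\ker\bigl(p\,\vert\,\Delta(\lambda)\bigr)$: a direct computation with the relations \eqref{commrelations} (using $pv_\lambda=0$) shows that $p$ acts on $\mathbb{C}[f,q]v_\lambda$ as the differential operator $\dot z\partial_q-q\partial_f$; putting $u:=2\dot z f+q^2$ we have $\mathbb{C}[f,q]=\mathbb{C}[u,q]$, and in the coordinates $(u,q)$ this operator becomes $\dot z\partial_q$, so $\ker\bigl(p\,\vert\,\Delta(\lambda)\bigr)=\mathbb{C}[u]v_\lambda$. The second step is to compute $e$ on this kernel: using $[h,u]=-2u$, the relation $pu^{k}v_\lambda=0$ (which follows from $[p,u]$ acting as $0$ on $\Delta(\lambda)$, essentially the computation already carried out in the proof of Proposition~\ref{prop4}) and $[e,u]=2\dot z h+2qp+z$, a short induction on $k$ gives
\begin{displaymath}
e\cdot u^{k}v_\lambda=\dot z\,k\,(2m+3-2k)\,u^{k-1}v_\lambda,\qquad k\in\mathbb{Z}_+.
\end{displaymath}
Consequently, if $w=\sum_{k=0}^{d}a_k u^{k}v_\lambda\in\mathbb{C}[u]v_\lambda$ is killed by $e$, then $a_k\,k\,(2m+3-2k)=0$ for all $k\ge 1$; since $\dot z\neq 0$ and $2m+3-2k$ is an odd integer --- here the hypothesis $m\in\mathbb{Z}$ is essential --- hence nonzero, we get $a_k=0$ for $k\ge 1$, which proves \eqref{prop5.1}.

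Claim \eqref{prop5.2} then follows using the central element $\mathtt{c}$: by \eqref{prop5.1} every highest weight module in $\mathcal{O}[\xi]$ is simple, hence (since every object of $\mathcal{O}$ is a quotient of a module with a finite Verma filtration, being generated by a finite-dimensional $\mathfrak{b}$-submodule, cf.\ \cite{MP}) every object of $\mathcal{O}[\xi]$ has finite length with composition factors among the $\Delta(\mu)=L(\mu)$, $\mu\in\xi$; and $\mathtt{c}$ acts on $\Delta(\mu)$ by $(\mu(h)+1)(\mu(h)+2)\mu(z)$, which for $\mu\in\xi$ equals $i(i+1)\dot z$ precisely for $\mu\in\{\lambda_i,r\cdot\lambda_i\}$, so decomposing $\mathcal{O}[\xi]$ by the (generalized) eigenvalue of $\mathtt{c}$ gives $\mathcal{O}[\xi]=\bigoplus_i\mathcal{O}[\xi]_{\lambda_i}$. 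For \eqref{prop5.3}, observe that $\lambda_i\neq r\cdot\lambda_i$ (as $-\tfrac32\notin\mathbb{Z}$), so $\mathcal{O}[\xi]_{\lambda_i}$ has exactly the two simple objects $L(\lambda_i)=\Delta(\lambda_i)$ and $L(r\cdot\lambda_i)=\Delta(r\cdot\lambda_i)$, each with endomorphism ring $\mathbb{C}$; it is therefore enough to show that all $\mathrm{Ext}^1_{\mathcal{O}}$ between them vanish, since then $\mathcal{O}[\xi]_{\lambda_i}$ is semisimple and, being Krull-Schmidt with finite-dimensional morphism spaces, is equivalent to $\mathbb{C}\oplus\mathbb{C}\text{-}\mathrm{mod}$. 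Here $\mathrm{Ext}^1_{\mathcal{O}}(\Delta(\mu),\Delta(\mu))=0$ for any Verma module (an extension would have a two-dimensional weight space of maximal weight $\mu$ consisting of highest weight vectors, hence would split by a character count); since $\lambda_i(h)=-1+i>-2-i=(r\cdot\lambda_i)(h)$, the weight $\lambda_i$ is maximal in $\mathcal{O}[\xi]_{\lambda_i}$, so any extension of $L(\lambda_i)$ by $L(r\cdot\lambda_i)$ splits via the lift $\Delta(\lambda_i)\to E$ of the top weight vector, giving $\mathrm{Ext}^1_{\mathcal{O}}(L(\lambda_i),L(r\cdot\lambda_i))=0$; finally $\mathrm{Ext}^1_{\mathcal{O}}(L(r\cdot\lambda_i),L(\lambda_i))=0$ follows by applying the duality $\star$ of Subsection~\ref{s3.8}.

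The main obstacle is \eqref{prop5.1} in the range $m=\lambda(h)\ge 0$: there Lemma~\ref{lem2} does not rule out a singular vector of weight $r\cdot\lambda$, and the real content is the substitution $u=2\dot z f+q^2$, which trivialises the action of $p$, together with the resulting closed formula for $e\cdot u^{k}v_\lambda$. It is exactly the parity of $2\lambda(h)+3$ that distinguishes the integral case treated here, where all Verma modules are simple, from the half-integral case of Proposition~\ref{prop4}, where $2\lambda(h)+3$ is even and the coefficient $2m+3-2k$ does vanish (at $k=\lambda(h)+\tfrac32$).
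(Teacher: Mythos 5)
Your proof is correct, and for the key part \eqref{prop5.1} it takes a genuinely different route from the paper. The paper splits \eqref{prop5.1} into cases: simplicity of $\Delta(r\cdot\lambda_i)$ (where $\lambda(h)\le -2$) comes for free from the Casimir eigenvalues via Lemma~\ref{lem2}, simplicity of $\Delta(\lambda_0)$ is checked by the one-line observation $pqv_{\lambda_0}=\lambda_0(z)v_{\lambda_0}\neq 0$, and the remaining Verma modules are handled by showing that the projective functors $\mathrm{F}_V$ induce equivalences $\mathcal{O}[\xi]_{\lambda_i}\cong\mathcal{O}[\xi]_{\lambda_0}$; claim \eqref{prop5.3} is then deduced from projectivity of the two simple Vermas in the minimal block plus the duality $\star$, and transported by the same equivalences. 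You instead compute the full space of singular vectors in every $\Delta(\lambda)$ directly: the change of variables $u=2\dot z f+q^2$ reducing $\ker p$ to $\mathbb{C}[u]v_\lambda$, and the formula $e\cdot u^kv_\lambda=\dot z\,k\,(2m+3-2k)u^{k-1}v_\lambda$, are both correct (I checked the commutators $[p,2zf+q^2]=0$, $[h,u]=-2u$, $[e,u]=2zh+2qp+z$ and the resulting telescoping sum), and the parity of $2m+3-2k$ cleanly isolates the integral case; your treatment of \eqref{prop5.3} likewise replaces translation functors by a direct vanishing of all $\mathrm{Ext}^1$ between the two simples (maximality of $\lambda_i$, duality, and no self-extensions of Vermas). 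What your approach buys is a uniform, self-contained argument that also explains structurally why the half-integral case of Proposition~\ref{prop4} behaves differently (the coefficient $2m+3-2k$ vanishes exactly there, recovering the singular vector $(2\dot z f+q^2)^kv_\lambda$ used in that proof); what the paper's approach buys is brevity, since the projective-functor machinery is already set up for Proposition~\ref{prop4} and no closed formula is needed.
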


\begin{proof}
The proof is similar to that of Propositions~\ref{lem3} and \ref{prop4}. The decomposition in claim~\eqref{prop5.2} 
is again  given using the action of the central element $\mathtt{c}$.

That $\Delta(r\cdot \lambda_i)$ is simple for each $i\in\mathbb{Z}_+$ is proved similarly to the analogous statement
in Proposition~\ref{lem3}. That $\Delta(\lambda_0)$ is simple follows from the observation that, on the one
hand, $\dim\Delta(\lambda_0)_{\lambda_0-h^{\checkmark}}=1$ but, on the other hand, the element 
$qv_{\lambda_0}$ does not satisfy $\mathfrak{n}_+ qv_{\lambda_0}=0$ since 
$pqv_{\lambda_0}=\lambda_0(z)v_{\lambda_0}\neq 0$. This implies that $\Delta(\lambda_0)$ is a simple
projective module in $\mathcal{O}[\xi]_{\lambda_0}$. In particular, 
$\mathrm{Ext}_{\mathcal{O}}^1(\Delta(\lambda_0),\Delta(r\cdot \lambda_0))=0$. Applying $\star$ we also
get $\mathrm{Ext}_{\mathcal{O}}^1(\Delta(r\cdot \lambda_0),\Delta(\lambda_0))=0$. This implies that 
$\Delta(r\cdot \lambda_0)$ is also a simple projective module in $\mathcal{O}[\xi]_{\lambda_0}$ and hence
$\mathcal{O}[\xi]_{\lambda_0}\cong \mathbb{C}\oplus\mathbb{C}\text{-}\mathrm{mod}$.

Now, similarly to the proof of Propositions~\ref{prop4}, using projective functors one shows that 
$\mathcal{O}[\xi]_{\lambda_i}\cong \mathcal{O}[\xi]_{\lambda_j}$ for all $i,j\in \mathbb{Z}_+$. Claims
\eqref{prop5.1} and \eqref{prop5.3} follow.
\end{proof}

Propositions~\ref{prop4} and \ref{prop5} completely describe all blocks of $\mathcal{O}$ with nonzero
central charge, in particular, we see that all indecomposable such blocks are equivalent to 
indecomposable blocks of $\mathcal{O}$ for $\mathfrak{sl}_2$. As we will see in the next section, for
zero central charge the situation is quite different.

\subsection{Tensor product realization}\label{s3.9}

For $\dot{z}\in\mathbb{C}\setminus\{0\}$ consider the algebras $A_{\dot{z}}:=U(\mathfrak{s})/(z-\dot{z})$
and $B_{\dot{z}}:=U(\mathfrak{i})/(z-\dot{z})$. Note that $B_{\dot{z}}$ is isomorphic to the first Weyl algebra, in 
particular, $B_{\dot{z}}$ is a simple algebra. Following \cite[Theorem~1]{LMZ1} define the homomorphism
$\Phi:A_{\dot{z}}\to B_{\dot{z}}$ as follows:
\begin{displaymath}
\Phi:e\to \frac{p^2}{2\dot{z}},\qquad 
\Phi:f\to -\frac{q^2}{2\dot{z}},\qquad 
\Phi:h\to -\frac{qp}{\dot{z}}-\frac{1}{2}.
\end{displaymath}
Consider the (unique) ``highest weight'' $B_{\dot{z}}$-module $\mathbf{M}:=B_{\dot{z}}/B_{\dot{z}}p$. This
is a simple $B_{\dot{z}}$-module. Pulling back via $\Phi$, the module $\mathbf{M}$ becomes a simple highest weight 
$U$-module with highest weight $-\frac{1}{2}$ and central charge $\dot{z}$.

Let $\mathcal{O}^{(\mathfrak{sl}_2)}$ denote the usual category $\mathcal{O}$ for $\mathfrak{sl}_2$
(see e.g. \cite[Chapter~5]{Ma}). We may regard $\mathcal{O}^{(\mathfrak{sl}_2)}$ as a full subcategory
of $\mathcal{O}$ via the quotient map $\mathfrak{s}\tto \mathfrak{sl}_2$.

Let $\mathcal{O}[\dot{z}]$ denote the full subcategory of $\mathcal{O}$ consisting of all modules with 
central charge $\dot{z}$.

\begin{proposition}\label{prop721}
Tensoring with $\mathbf{M}$ and using the usual comultiplication in $U$ defines a functor
\begin{displaymath}
\mathbf{M}\otimes{}_-: \mathcal{O}^{(\mathfrak{sl}_2)}\to \mathcal{O}[\dot{z}].
\end{displaymath}
Moreover, this functor is an equivalence of categories which sends Verma $\mathfrak{sl}_2$-modules to Verma $U$-modules.
\end{proposition}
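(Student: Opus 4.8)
The plan is to turn $\mathbf M\otimes{}_-$ into an external tensor product functor after a change of presentation of $A_{\dot z}$, and then to write down an explicit quasi-inverse. The key algebraic observation is that $A_{\dot z}$ contains a second copy of $\mathfrak{sl}_2=\langle e,h,f\rangle$, embedded via $\psi(x):=x-\Phi(x)$ (regarding $\Phi(e),\Phi(f),\Phi(h)$ as elements of $B_{\dot z}\subset A_{\dot z}$), which centralizes $B_{\dot z}$. A short list of brackets in $A_{\dot z}$ gives $[x,\Phi(y)]=\Phi([x,y])$ for $x,y\in\mathfrak{sl}_2$, whence $\psi$ is a homomorphism of Lie algebras, and also $[\psi(x),p]=[\psi(x),q]=0$; comparing PBW bases then yields an algebra isomorphism $A_{\dot z}\cong U(\mathfrak{sl}_2)\otimes B_{\dot z}$ with $U(\mathfrak{sl}_2)$ embedded via $\psi$, $B_{\dot z}$ via the inclusion, and $\Phi$ identified with $\varepsilon\otimes\mathrm{id}$. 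Under this isomorphism the $U$-module $\mathbf M$ (the pullback of $B_{\dot z}/B_{\dot z}p$ along $\Phi$) becomes the external tensor product $\mathbb C\boxtimes\mathbf M_0$ of the trivial $\mathfrak{sl}_2$-module with the $B_{\dot z}$-module $\mathbf M_0:=B_{\dot z}/B_{\dot z}p$, and — since $p,q,z$ act by $0,0,\dot z$ on an object of $\mathcal O^{(\mathfrak{sl}_2)}$ — a direct computation with the comultiplication identifies $\mathbf M\otimes N$ with $N\boxtimes\mathbf M_0$.

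With this in hand, the functor lands where claimed: $N\boxtimes\mathbf M_0$ is a weight module with finite-dimensional weight spaces and support bounded above (convolve the supports of $N$ and $\mathbf M_0$), it is finitely generated over $A_{\dot z}$ (if $n_1,\dots,n_s$ generate $N$ over $U(\mathfrak{sl}_2)$, then $n_1\otimes\bar1,\dots,n_s\otimes\bar1$ generate $N\boxtimes\mathbf M_0$), and $U(\mathfrak n_+)=\mathbb C[e,p]$ acts locally finitely because $e$ and $p$ commute and act locally nilpotently on each tensor factor. Furthermore, the canonical generator $\bar1\otimes v_\mu$ of $\mathbf M\otimes\Delta^{\mathfrak{sl}_2}(\mu)$ is killed by $e$ and $p$ and generates the module, producing a surjection $\Delta(\lambda)\tto\mathbf M\otimes\Delta^{\mathfrak{sl}_2}(\mu)$, where $\lambda(h)=\mu-\tfrac12$ and $\lambda(z)=\dot z$; the weight-multiplicity count $\dim(\mathbf M\otimes\Delta^{\mathfrak{sl}_2}(\mu))_{\lambda-ih^{\checkmark}}=\#\{(k,l)\in\mathbb Z_+^2\colon k+2l=i\}=\lfloor i/2\rfloor+1$ matches that of $\Delta(\lambda)$, forcing this surjection to be an isomorphism. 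Thus Verma modules go to Verma modules, bijectively onto those of central charge $\dot z$.

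For the equivalence I would produce the quasi-inverse $G\colon\mathcal O[\dot z]\to\mathcal O^{(\mathfrak{sl}_2)}$ given by $G(X):=\ker(p|_X)$ with $\mathfrak{sl}_2$ acting through $\psi(e),\psi(h),\psi(f)$ (this is well defined because $\psi(\mathfrak{sl}_2)$ centralizes $p$). Two facts drive the argument. First, $p$ acts locally nilpotently on every $X\in\mathcal O$, since the root of $p$ is positive and $\mathrm{supp}(X)$ is bounded above. Second, a $B_{\dot z}$-module on which $p$ acts locally nilpotently is a direct sum of copies of $\mathbf M_0$; concretely, the evaluation map $\mathbf M_0\otimes\ker(p|_X)\to X$ is an isomorphism of $B_{\dot z}$-modules — injectivity by a Vandermonde argument in the powers of $p$, surjectivity because any $v$ with $pv\in B_{\dot z}\cdot\ker(p|_X)$ may be corrected, modulo $B_{\dot z}\cdot\ker(p|_X)$, into $\ker(p|_X)$. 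Together with $\mathrm{End}_{B_{\dot z}}(\mathbf M_0)=\mathbb C$, these evaluation isomorphisms provide natural isomorphisms $FG\cong\mathrm{id}$ and $GF\cong\mathrm{id}$, provided one checks that $G(X)$ indeed lies in $\mathcal O^{(\mathfrak{sl}_2)}$: it is an $h$-weight module (if $pv=0$ then $p(hv)=0$, and $\psi(h)=h+\tfrac12$ on $\ker p$), the element $\psi(e)=e-\tfrac1{2\dot z}p^2$ acts locally nilpotently on it (a difference of commuting locally nilpotent operators), and it is finitely generated over $U(\mathfrak{sl}_2)$ (writing a finite $A_{\dot z}$-generating set of $X$ in the form $\sum_j(1\otimes b_{ij})(n_{ij}\otimes\bar1)$, the finitely many $n_{ij}$ generate $G(X)$).

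The step I expect to be the main obstacle is the second fact above: identifying the category of modules over the first Weyl algebra $B_{\dot z}$ on which $p$ acts locally nilpotently, and then confirming that $G(X)$ returns to $\mathcal O^{(\mathfrak{sl}_2)}$ — where finite generation over $U(\mathfrak{sl}_2)$ is the only point not immediately formal. The construction of $\psi$ and the isomorphism $A_{\dot z}\cong U(\mathfrak{sl}_2)\otimes B_{\dot z}$, the Verma-module computation, and the verifications $FG\cong\mathrm{id}$, $GF\cong\mathrm{id}$ are then routine, if somewhat lengthy, bookkeeping with PBW bases and the comultiplication.
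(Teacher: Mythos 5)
Your argument is correct, but it takes a genuinely different route from the paper. The paper's proof is short and leans on outside input: it quotes \cite[Theorem~2]{LMZ1}, \cite[Theorem~3]{LMZ1} and \cite[Theorem~7]{LZ} for the facts that $\mathbf{M}\otimes{}_-$ sends Verma modules to Verma modules and simples to simples, then uses projective functors (which commute with $\mathbf{M}\otimes{}_-$ by associativity of the tensor product) to generate all indecomposable projectives on both sides, and finally concludes by comparing the already-computed quiver descriptions of $\mathcal{O}^{(\mathfrak{sl}_2)}$ and of $\mathcal{O}[\dot z]$ from Propositions~\ref{prop4} and \ref{prop5}. You instead give a self-contained structural proof: the ``decoupling'' isomorphism $A_{\dot z}\cong U(\mathfrak{sl}_2)\otimes B_{\dot z}$ via $\psi(x)=x-\Phi(x)$ turns $\mathbf{M}\otimes{}_-$ into an external tensor product with $\mathbf{M}_0$, and the Kashiwara-type equivalence for the first Weyl algebra (modules with locally nilpotent $p$ are direct sums of copies of $\mathbf{M}_0$, with quasi-inverse $X\mapsto\ker(p|_X)$) furnishes an explicit inverse functor. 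Your approach buys more: it reproves the Verma-to-Verma statement rather than citing it, it does not depend on the block analysis of Section~\ref{s3} (indeed it could be used to rederive Propositions~\ref{prop4} and \ref{prop5} from the $\mathfrak{sl}_2$ theory), and it produces an explicit quasi-inverse rather than inferring the equivalence from matching Cartan data. The price is the verification of the decoupling and of the Weyl-algebra lemma, both of which you carry out or correctly flag as routine. Two small points: your ``Vandermonde'' injectivity argument is really the triangularity coming from $[p,q^k]=k\dot z\,q^{k-1}$ (apply the top power of $p$ and induct), and the injectivity of the multiplication map $U(\mathfrak{sl}_2)\otimes B_{\dot z}\to A_{\dot z}$ deserves one explicit line (e.g.\ a comparison of filtration components, or simplicity and centrality of the Weyl algebra combined with injectivity of $\psi$); neither affects the correctness of the proof.
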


\begin{proof}
Functoriality and exactness of $\mathbf{M}\otimes{}_-$ are clear.
That this functor sends simple modules to simple modules follows from \cite[Theorem~7]{LZ},
see also \cite[Theorem~3]{LMZ1}.
That it sends Verma modules to Verma modules follows from \cite[Theorem~2]{LMZ1}.
In particular, it sends projective Verma modules to projective Verma modules. 
Because of the associativity of the tensor product, the functor $\mathbf{M}\otimes{}_-$ commutes with 
projective functors. As each projective functor is biadjoint to a projective functor, each projective functor sends
projective modules to projective modules. Applying projective functors to projective Verma modules
produces all indecomposable projectives both in $\mathcal{O}^{(\mathfrak{sl}_2)}$ and in $\mathcal{O}[\dot{z}]$.
It follows that $\mathbf{M}\otimes{}_-$ sends projective modules to projective modules.

Using the usual inductive argument and tensoring with the simple $2$-dimensional $\mathfrak{sl}_2$-module
one now verifies that $\mathbf{M}\otimes{}_-$ sends indecomposable projective modules to 
indecomposable projective modules. Moreover, by construction this functor clearly does not annihilate
any homomorphisms. Now the statement of the proposition follows by comparing the descriptions of
$\mathcal{O}^{(\mathfrak{sl}_2)}$ (see e.g. \cite[Chapter~5]{Ma}) with the description on 
$\mathcal{O}[\dot{z}]$ obtained above.
\end{proof}

\section{Blocks with zero central charge}\label{s4}

\subsection{Indecomposability}\label{s4.1}

As the first step towards understanding the structure of blocks of zero central charge we prove the following:

\begin{lemma}\label{lem11}
Let $\xi\in \mathfrak{h}^*/\mathbb{Z}R$ be of zero central charge.
\begin{enumerate}[$($i$)$] 
\item\label{lem11.1} There is an inclusion $\Delta(\lambda-h^{\checkmark})\hookrightarrow
\Delta(\lambda)$ for any $\lambda\in\xi$.
\item\label{lem11.2} $\mathcal{O}[\xi]$ is an indecomposable category.
\end{enumerate}
\end{lemma}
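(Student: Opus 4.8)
The plan is to prove part \eqref{lem11.1} by an explicit computation, and then derive part \eqref{lem11.2} as a formal consequence via the now-standard ``linking'' argument for block decompositions of category $\mathcal{O}$.

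For \eqref{lem11.1}, the natural strategy is to exhibit an explicit singular vector. Since $\xi$ has zero central charge, we have $\lambda(z)=0$ for all $\lambda\in\xi$, so the relation $[p,q]=z$ degenerates. First I would write down a general weight vector of weight $\lambda-h^{\checkmark}$ inside $\Delta(\lambda)$: by the dimension formula $\dim\Delta(\lambda)_{\lambda-h^{\checkmark}}=\lfloor\tfrac12\rfloor+1=1$, so this weight space is spanned by $qv_\lambda$ (the only PBW monomial in $\mathfrak{n}_-=\langle f,q\rangle$ of the right weight being $q$). I then need to check that $\mathfrak{n}_+$ annihilates $qv_\lambda$, i.e. that $e\cdot qv_\lambda=0$ and $p\cdot qv_\lambda=0$. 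Using the bracket relations from \eqref{commrelations}: $p\cdot qv_\lambda=qpv_\lambda+[p,q]v_\lambda=0+zv_\lambda=\dot z\,v_\lambda=0$ since $\dot z=0$; and $e\cdot qv_\lambda=qev_\lambda+[e,q]v_\lambda=0+pv_\lambda=0$. Hence $qv_\lambda$ is a highest weight vector of weight $\lambda-h^{\checkmark}$, which by the universal property of Verma modules gives a nonzero homomorphism $\Delta(\lambda-h^{\checkmark})\to\Delta(\lambda)$; as noted in Section~\ref{s3.2} every nonzero homomorphism between Verma modules is injective, so this is the desired inclusion.

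For \eqref{lem11.2}, the plan is to use the standard fact (see \cite{Hu,MP}) that a block decomposition of $\mathcal{O}[\xi]$ into indecomposable summands corresponds to the partition of $\{L(\mu):\mu\in\xi\}$ into equivalence classes under the smallest equivalence relation containing all pairs $(L(\mu),L(\nu))$ with $\mathrm{Ext}^1_{\mathcal{O}}(L(\mu),L(\nu))\neq 0$ (equivalently, $L(\mu)$ and $L(\nu)$ appearing as subquotients of a common indecomposable module). It therefore suffices to show all the simples $L(\mu)$, $\mu\in\xi$, lie in a single such class. From part \eqref{lem11.1}, iterating the inclusions gives $\Delta(\lambda-kh^{\checkmark})\hookrightarrow\Delta(\lambda)$ for all $k\in\mathbb{Z}_+$; in particular $\Delta(\lambda)$ is not simple (it has infinite length, or at least a proper nonzero submodule), so $L(\lambda)$ and $L(\lambda-h^{\checkmark})$ — or more precisely $L(\lambda)$ and some $L(\mu)$ with $\mu$ a lower weight in $\xi$ — share a nonsplit extension / common nontrivial extension inside $\Delta(\lambda)$. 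One has to be slightly careful here: the inclusion only immediately tells us that the class of $L(\lambda)$ equals the class of some composition factor of $K(\lambda)$, and $K(\lambda)$ may not contain $L(\lambda-h^{\checkmark})$ as a top composition factor. A clean way around this is: since $\mathrm{supp}(\Delta(\lambda))=\lambda-\mathbb{Z}_+h^{\checkmark}$ (using $R_+=\{2h^\checkmark,h^\checkmark\}$, so $\mathbb{Z}_+R_+=\mathbb{Z}_+h^\checkmark$) and every $\mu\in\xi$ is of the form $\lambda-kh^{\checkmark}$ for suitable choice of $\lambda$ and $k$, it is enough to show that for every $\mu\in\xi$ the class of $L(\mu)$ coincides with the class of $L(\mu+h^{\checkmark})$, or alternatively that each $L(\mu)$ is linked to $L(\mu')$ for $\mu'$ the highest weight in the coset $\xi$ restricted appropriately — but $\xi$ has no highest weight, so instead I would argue directly that the nonzero inclusion $\Delta(\mu-h^\checkmark)\hookrightarrow\Delta(\mu)$ forces $L(\mu-h^\checkmark)$ to be a composition factor of $\Delta(\mu)$ (as $L(\mu-h^\checkmark)$ is the unique simple quotient of $\Delta(\mu-h^\checkmark)$), whence $L(\mu)$ and $L(\mu-h^\checkmark)$ are both composition factors of the indecomposable module $\Delta(\mu)$, hence linked. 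Since any two weights in $\xi$ differ by a multiple of $h^{\checkmark}$, transitivity of linking gives that all of $\{L(\mu):\mu\in\xi\}$ form one class, so $\mathcal{O}[\xi]$ is indecomposable.

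The main obstacle I anticipate is purely bookkeeping in \eqref{lem11.2}: one must be precise that composition factors of Verma modules are well-defined despite Verma modules possibly having infinite length, and that the ``linking'' criterion for indecomposability of a block is applicable in this generality. This is handled by noting that each $\Delta(\mu)$ has finite-dimensional weight spaces and support bounded above, so it has a (possibly infinite, but locally finite) composition series, and the block-decomposition argument of \cite{Hu,MP} goes through verbatim; the computational heart of the lemma is really the one-line singular-vector check in \eqref{lem11.1}, which crucially exploits $\dot z = 0$.
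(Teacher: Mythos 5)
Your proposal is correct and follows essentially the same route as the paper: the explicit singular vector $qv_{\lambda}$ (using $e\cdot qv_{\lambda}=0$ and $p\cdot qv_{\lambda}=zv_{\lambda}=0$ by zero central charge) yields the embedding, and indecomposability of the block follows by linking $L(\lambda)$ and $L(\lambda-h^{\checkmark})$ inside the indecomposable module $\Delta(\lambda)$ and iterating over $\xi$. The extra bookkeeping you include about composition factors and infinite length is harmless but not needed beyond what the paper states.
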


\begin{proof}
We obviously have $e\cdot qv_{\lambda}=0$ and, moreover, $p\cdot qv_{\lambda}=zv_{\lambda}=0$ (as the central charge
is zero). Therefore mapping $v_{\lambda-h^{\checkmark}}$ to $qv_{\lambda}$ extends to a nonzero homomorphism
from $\Delta(\lambda-h^{\checkmark})$ to $\Delta(\lambda)$, which is necessarily injective (see Subsection~\ref{s3.2}).
This proves claim~\eqref{lem11.1}.

As $\Delta(\lambda)$ is indecomposable, from claim~\eqref{lem11.1} it follows that $L(\lambda)$ and 
$L(\lambda-h^{\checkmark})$ belong to the same indecomposable direct summand of $\mathcal{O}[\xi]$
for any $\lambda\in\xi$. Claim~\eqref{lem11.2} follows.
\end{proof}

\subsection{Truncated categories}\label{s4.2}

Let $\xi\in \mathfrak{h}^*/\mathbb{Z}R$ be of zero central charge and $\lambda\in\xi$. Denote by 
$\mathcal{O}[\xi,\lambda]$ the full subcategory of $\mathcal{O}[\xi]$ consisting of all modules $M$
such that $M_{\lambda+ih^{\checkmark}}=0$ for all $i\in\mathbb{N}$. Alternatively, 
$\mathcal{O}[\xi,\lambda]$ is the Serre subcategory of $\mathcal{O}[\xi]$ generated by 
modules $L(\lambda-ih^{\checkmark})$, $i\in\mathbb{Z}_+$. Directly from the definition we have
$\mathcal{O}[\xi,\lambda]\hookrightarrow \mathcal{O}[\xi,\lambda+h^{\checkmark}]$ for every
$\lambda\in\xi$ and $\mathcal{O}[\xi]$ is exactly the inductive (direct) limit of this directed system
of categories. Note that the duality $\star$ preserves each $\mathcal{O}[\xi,\lambda]$ while 
projective functors do not preserve these truncated subcategories. The idea of definition of 
the categories $\mathcal{O}[\xi,\lambda]$ is taken from \cite{DGK,RCW} 
(where it is applied to category $\mathcal{O}$ for Kac-Moody Lie algebras, see also \cite{MP,FKM}).

Denote by $\mathcal{F}(\Delta)$ the full subcategory of $\mathcal{O}[\xi]$ consisting of all modules
having a {\em Verma flag}, that is a filtration whose subquotients are isomorphic to Verma modules.
The reason for introducing $\mathcal{O}[\xi,\lambda]$ is the fact that $\mathcal{O}[\xi]$ does not have
nonzero projective objects, while for $\mathcal{O}[\xi,\lambda]$ we have the following:

\begin{proposition}\label{prop12}
Let $\xi\in \mathfrak{h}^*/\mathbb{Z}R$ be of zero central charge, $\lambda\in\xi$
and $i\in\mathbb{Z}_+$.
\begin{enumerate}[$($i$)$]
\item\label{prop12.1} The module $L(\lambda-ih^{\checkmark})$ is a quotient of a
unique, up to isomorphism, indecomposable projective object $P^{(\lambda)}(\lambda-ih^{\checkmark} )$
in $\mathcal{O}[\xi,\lambda]$.
\item\label{prop12.2} We have $P^{(\lambda)}(\lambda-ih^{\checkmark})\tto 
\Delta(\lambda-ih^{\checkmark})$ and the kernel $K$ of this
epimorphism has a Verma flag. Moreover, the only Verma modules occurring as subquotients
in a Verma flag of $K$ are $\Delta(\lambda-jh^{\checkmark})$ where $j<i$.
\end{enumerate}
\end{proposition}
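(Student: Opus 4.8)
The plan is to mimic the classical BGG construction of projective objects in a truncated category $\mathcal{O}$, adapted to the triangular decomposition \eqref{eq1}. The category $\mathcal{O}[\xi,\lambda]$ is a full subcategory of $\mathcal{O}[\xi]$ closed under subquotients and finite direct sums; it has enough injectives and projectives because it is equivalent to the category of finitely generated modules over a finite-dimensional algebra — this follows from the fact that the support of any object is contained in a finite union of sets $\mu-\mathbb{Z}_+R_+$ and is bounded above by the truncation, so only finitely many simple modules $L(\lambda-jh^{\checkmark})$, $0\le j\le i$, appear, each appearing with finite multiplicity. For the existence and uniqueness in part \eqref{prop12.1}, I would take a sufficiently large finite-dimensional $\mathfrak{b}$-submodule argument: start with the Verma module $\Delta(\lambda-ih^{\checkmark})$, which is the projective cover of $L(\lambda-ih^{\checkmark})$ in the non-truncated ``upper'' direction, and observe that it fails to be projective in $\mathcal{O}[\xi,\lambda]$ only because of extensions with the finitely many simples $L(\lambda-jh^{\checkmark})$, $j<i$, that lie strictly above it in the order. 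Uniqueness of $P^{(\lambda)}(\lambda-ih^{\checkmark})$ is then automatic from the Krull–Schmidt property established in Section~\ref{s3.1}.

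For the construction itself, I would use tensoring with finite-dimensional $\mathfrak{sl}_2$-modules (the projective functors of Section~\ref{s3.5}) followed by the truncation projection. Concretely, $\mathrm{F}_V\Delta(\mu)$ has a Verma flag with subquotients $\Delta(\mu+\nu)$ as $\nu$ runs over the weights of $V$ (this is the standard tensor identity: $V\otimes\Delta(\mu)\cong U\otimes_{U(\mathfrak{b})}(V\otimes\mathbb{C}_\mu)$, and $V\otimes\mathbb{C}_\mu$ as a $\mathfrak{b}$-module has a filtration by one-dimensionals $\mathbb{C}_{\mu+\nu}$ ordered so the highest is on top). Since $\mathrm{F}_V$ sends projectives to projectives (being biadjoint to itself), applying a suitable $\mathrm{F}_V$ to a Verma module which is projective in $\mathcal{O}[\xi,\lambda]$ (e.g.\ $\Delta(\lambda)$ itself, or more precisely the top Verma $\Delta(\lambda)$ in the truncation, which is projective because no simples sit above it) and then projecting onto $\mathcal{O}[\xi,\lambda]$ produces a projective object in $\mathcal{O}[\xi,\lambda]$ with a Verma flag. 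Choosing $V$ so that $\Delta(\lambda-ih^{\checkmark})$ occurs in this flag, and extracting the indecomposable summand covering $L(\lambda-ih^{\checkmark})$, gives $P^{(\lambda)}(\lambda-ih^{\checkmark})$. The Verma flag on this summand persists because a direct summand of a module with a Verma flag again has a Verma flag (BGG reciprocity / the fact that $\mathcal{F}(\Delta)$ is closed under summands, which holds here since $\mathrm{Ext}^1(\Delta(\mu),\Delta(\nu))=0$ unless $\nu<\mu$, giving the requisite homological control).

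For part \eqref{prop12.2}: the surjection $P^{(\lambda)}(\lambda-ih^{\checkmark})\tto L(\lambda-ih^{\checkmark})$ lifts, using projectivity and the fact that $\Delta(\lambda-ih^{\checkmark})\tto L(\lambda-ih^{\checkmark})$, to a map $P^{(\lambda)}(\lambda-ih^{\checkmark})\to\Delta(\lambda-ih^{\checkmark})$; I claim this is surjective. Indeed its image is a submodule of $\Delta(\lambda-ih^{\checkmark})$ mapping onto $L(\lambda-ih^{\checkmark})$, hence contains the highest weight vector $v_{\lambda-ih^{\checkmark}}$, hence is all of $\Delta(\lambda-ih^{\checkmark})$. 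The kernel $K$ sits inside the kernel of $P^{(\lambda)}(\lambda-ih^{\checkmark})\tto L(\lambda-ih^{\checkmark})$ and, by the standard argument with Verma flags, $K$ inherits a Verma flag from that of $P^{(\lambda)}(\lambda-ih^{\checkmark})$ — one compares flags using that $\Delta(\lambda-ih^{\checkmark})$ is the unique flag subquotient with that highest weight (it occurs with multiplicity one by BGG reciprocity, since $\mathrm{Hom}(P^{(\lambda)}(\lambda-ih^{\checkmark}),\Delta(\lambda-ih^{\checkmark})^{\star})$ is one-dimensional). The remaining flat subquotients of $K$ are $\Delta(\lambda-jh^{\checkmark})$ with $\lambda-jh^{\checkmark}>\lambda-ih^{\checkmark}$ in the order on $\xi$, i.e.\ $j<i$, since all weights appearing in $P^{(\lambda)}(\lambda-ih^{\checkmark})$ that are $\ge\lambda-ih^{\checkmark}$ contribute the copy of $\Delta(\lambda-ih^{\checkmark})$ (everything strictly above maps into flag quotients $\Delta(\lambda-jh^{\checkmark})$, $j<i$) while everything strictly below $\lambda-ih^{\checkmark}$ is already carried by $\Delta(\lambda-ih^{\checkmark})$ itself and so is quotiented out.

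The main obstacle I anticipate is \emph{not} the formal homological algebra — which is essentially the standard BGG/Rocha-Caridi–Wallach machinery valid for any triangular decomposition — but rather verifying that the truncated category $\mathcal{O}[\xi,\lambda]$ genuinely has enough projectives in a form concrete enough to run the argument, given the peculiar feature (noted just before the proposition) that $\mathcal{O}[\xi]$ itself has \emph{no} nonzero projectives. The subtlety is that the Verma modules here are not the ``large'' Verma modules of Kac–Moody type but have the specific weight multiplicities $\lfloor i/2\rfloor+1$ recorded in Section~\ref{s3.2}, and one must check that tensoring with finite-dimensional $\mathfrak{sl}_2$-modules (which only see the $\mathfrak{sl}_2$-quotient and not the Heisenberg part $\mathfrak{i}$) still produces enough projectives to cover every $L(\lambda-ih^{\checkmark})$; this is where the explicit action of $p,q,z$ may need to be invoked, analogously to the computations $e\cdot qv_\lambda=0$, $p\cdot qv_\lambda=zv_\lambda=0$ used in Lemma~\ref{lem11}.
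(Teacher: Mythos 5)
Your overall strategy (build a projective object with a Verma flag, extract the indecomposable summand covering $L(\lambda-ih^{\checkmark})$, then compare flags for part (ii)) is the right one, but the construction of the projective itself has a genuine gap. First, a factual error: $\mathcal{O}[\xi,\lambda]$ is \emph{not} equivalent to finitely generated modules over a finite-dimensional algebra. The truncation bounds weights from above only, so its simple objects are $L(\lambda-jh^{\checkmark})$ for \emph{all} $j\in\mathbb{Z}_+$ (the category does not depend on your $i$ at all); indeed Theorem~\ref{thm15} identifies it with locally finite modules over an infinite quiver. More seriously, the projective-functor construction does not go through: as the paper notes in Subsection~\ref{s4.2}, $\mathrm{F}_V$ does not preserve the truncated subcategories. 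If you apply $\mathrm{F}_V$ to the projective object $\Delta(\lambda)$ of $\mathcal{O}[\xi,\lambda]$ and then the truncation functor $\tau$ (largest quotient lying in $\mathcal{O}[\xi,\lambda]$), adjunction gives $\mathrm{Hom}_{\mathcal{O}[\xi,\lambda]}(\tau(V\otimes\Delta(\lambda)),N)\cong\mathrm{Hom}_{\mathcal{O}}(\Delta(\lambda),V^*\otimes N)$, and $V^*\otimes N$ lives in the \emph{larger} truncation $\mathcal{O}[\xi,\lambda+ih^{\checkmark}]$, where $\Delta(\lambda)$ is no longer projective: by Lemma~\ref{lem11}(i) and duality, $L(\lambda)$ has a nonsplit extension by $L(\lambda+h^{\checkmark})$, which lifts to a nonzero class in $\mathrm{Ext}^1(\Delta(\lambda),L(\lambda+h^{\checkmark}))$ since $K(\lambda)$ has no weights $\geq\lambda$. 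So exactness of the represented functor fails and $\tau(V\otimes\Delta(\lambda))$ need not be projective; moreover $\tau$ is only right exact, so the truncation of a Verma flag is not a Verma flag. Your fallback claim that the category ``has enough projectives because it is a finite-dimensional algebra'' is both false and circular, since producing projectives is precisely what the proposition must do (recall $\mathcal{O}[\xi]$ itself has none).

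The paper sidesteps all of this with a direct BGG-style construction: for $\mu=\lambda-ih^{\checkmark}$ take $P=U/I$ with $I$ the left ideal generated by $h-\mu(h)$, $z$ and the subspaces $U_j$ for $j>i$. On any $N\in\mathcal{O}[\xi,\lambda]$ these generators necessarily annihilate every vector of weight $\mu$, whence $\mathrm{Hom}_{\mathcal{O}[\xi,\lambda]}(P,N)\cong N_\mu$; this functor is exact, so $P$ is projective, and the PBW theorem gives $P$ a Verma flag whose subquotients are exactly the $\Delta(\lambda-jh^{\checkmark})$ with $j\le i$ (with $j=i$ occurring once, on top). Closure of $\mathcal{F}(\Delta)$ under direct summands, as in \cite[Proposition~2]{BGG}, then yields part (ii). Two further remarks: your appeal to BGG reciprocity inside the proof of (ii) is circular, since Corollary~\ref{cor14} is deduced \emph{from} this proposition — the multiplicity statement should come from the explicit flag of $U/I$ instead; and your uniqueness argument via Krull--Schmidt for part (i) is fine once existence is secured.
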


\begin{proof}
This is similar to \cite{BGG}. Set $\mu:=\lambda-ih^{\checkmark}$. Denote by
$I$ the left ideal in $U$ generated by $h-\mu(h)$, $z$ and $U_j$ for all $j>i$. Then 
for the $U$-module $P:=U/I$ we have
$P\in \mathcal{O}[\xi,\lambda]$, moreover, we have
$\mathrm{Hom}_{\mathcal{O}[\xi,\lambda]}(P,N)=N_{\mu}$ for any $N\in \mathcal{O}[\xi,\lambda]$.
As $N\mapsto N_{\mu}$ is an exact functor, the module $P$ is projective. As 
$\mathrm{Hom}_{\mathcal{O}[\xi,\lambda]}(P,L_{\mu})=
L(\mu)_{\mu}\neq 0$, the module $P$ has an indecomposable direct summand which surjects onto $L(\mu)$.
This proves claim~\eqref{prop12.1}.

It follows from the PBW theorem that the module $P$ constructed above has a Verma flag and
the only Verma modules occurring as subquotients in any Verma flag of $P$ are $\Delta(\lambda-jh^{\checkmark})$ 
where $j\leq i$. As in \cite[Proposition~2]{BGG}, we have that $\mathcal{F}[\Delta]$ is closed under taking 
direct summands. Claim~\eqref{prop12.2} follows. 
\end{proof}

Proposition~\ref{prop12} says that $\mathcal{O}[\xi,\lambda]$ is a {\em highest weight category} in the
sense of \cite{CPS}. Simple modules in this category are indexed by $\lambda-ih^{\checkmark}$, where
$i\in\mathbb{Z}_+$, with the natural order $\lambda-ih^{\checkmark}>\lambda-jh^{\checkmark}$ if $i<j$. 
In particular, the multiplicity $[P^{(\lambda)}(\mu):\Delta(\nu)]$ of
$\Delta(\nu)$ as a subquotient of  a Verma flag of $P^{(\lambda)}(\mu)$ does not depend on the choice
of this flag. Furthermore, using the duality $\star$ and \cite{Ir} we have the following
{\em BGG-reciprocity}:

\begin{corollary}\label{cor14}
Let $\xi\in \mathfrak{h}^*/\mathbb{Z}R$ be of zero central charge, $\lambda\in\xi$
and $i,j\in\mathbb{Z}_+$. Then
\begin{displaymath}
[P^{(\lambda)}(\lambda-ih^{\checkmark}):\Delta(\lambda-jh^{\checkmark})]=
(\Delta(\lambda-jh^{\checkmark}):L(\lambda-ih^{\checkmark})),
\end{displaymath}
where the latter denotes the composition multiplicity.
\end{corollary}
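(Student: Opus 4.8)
The plan is to derive this BGG-reciprocity statement from the general machinery of highest weight categories, which is applicable thanks to Proposition~\ref{prop12}. The key structural inputs are: each $\mathcal{O}[\xi,\lambda]$ is a highest weight category with standard objects the Verma modules $\Delta(\lambda-ih^{\checkmark})$; it has enough projectives $P^{(\lambda)}(\lambda-ih^{\checkmark})$ with $\Delta$-flags (Proposition~\ref{prop12}\eqref{prop12.2}); and it carries the simple-preserving duality $\star$ from Subsection~\ref{s3.8}, which restricts to $\mathcal{O}[\xi,\lambda]$ as noted in Subsection~\ref{s4.2}. The costandard objects are then $\nabla(\mu):=\Delta(\mu)^{\star}$, and since $L(\mu)^{\star}\cong L(\mu)$ one has $[\nabla(\mu):L(\nu)]=[\Delta(\mu):L(\nu)]$ as composition multiplicities.

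First I would recall the standard homological reformulation of multiplicities in a highest weight category: for $P\in\mathcal{F}(\Delta)$ the multiplicity $[P:\Delta(\nu)]$ equals $\dim\mathrm{Hom}(P,\nabla(\nu))$, because $\mathrm{Hom}(\Delta(\mu),\nabla(\nu))$ is one-dimensional if $\mu=\nu$ and zero otherwise, and $\mathrm{Ext}^1(\Delta(\mu),\nabla(\nu))=0$ for all $\mu,\nu$; this is precisely the content imported from \cite{Ir} (and is the place where finiteness of the relevant intervals, guaranteed by the truncation, is used to make the induction go through). Applying this with $P=P^{(\lambda)}(\lambda-ih^{\checkmark})$ and $\nu=\lambda-jh^{\checkmark}$ gives
\begin{displaymath}
[P^{(\lambda)}(\lambda-ih^{\checkmark}):\Delta(\lambda-jh^{\checkmark})]=\dim\mathrm{Hom}_{\mathcal{O}[\xi,\lambda]}\big(P^{(\lambda)}(\lambda-ih^{\checkmark}),\nabla(\lambda-jh^{\checkmark})\big).
\end{displaymath}

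Next, since $P^{(\lambda)}(\lambda-ih^{\checkmark})$ is the projective cover of $L(\lambda-ih^{\checkmark})$, the right-hand side equals the multiplicity $[\nabla(\lambda-jh^{\checkmark}):L(\lambda-ih^{\checkmark})]$ (the dimension of $\mathrm{Hom}$ out of the projective cover of a simple into any module computes the composition multiplicity of that simple). Finally, applying the duality $\star$, which fixes every $L(\nu)$, converts $[\nabla(\lambda-jh^{\checkmark}):L(\lambda-ih^{\checkmark})]$ into $(\Delta(\lambda-jh^{\checkmark}):L(\lambda-ih^{\checkmark}))$, giving the claimed equality. The main obstacle is not any single computation but making sure the general highest weight category formalism is legitimately available here: one must check that $\mathcal{O}[\xi,\lambda]$ has finitely many simples up to the chosen truncation so that projective covers exist and the Ext-vanishing/$\Delta$-flag arguments of \cite{Ir,CPS} apply verbatim — but this is exactly what Proposition~\ref{prop12} and the discussion following it have already established, so the proof is a short assembly of these ingredients rather than new work.
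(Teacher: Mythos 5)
Your proposal is correct and is essentially the argument the paper intends: the paper justifies the corollary in one line by invoking the highest weight structure from Proposition~\ref{prop12}, the simple-preserving duality $\star$, and \cite{Ir}, and your write-up is exactly the standard unwinding of that citation ($[P:\Delta(\nu)]=\dim\mathrm{Hom}(P,\nabla(\nu))$ via $\mathrm{Ext}^1(\Delta,\nabla)=0$, then $\dim\mathrm{Hom}(P(L),\nabla(\nu))=[\nabla(\nu):L]=[\Delta(\nu):L]$ using $\star$). The only cosmetic imprecision is the phrase ``finitely many simples'': $\mathcal{O}[\xi,\lambda]$ has infinitely many simples, but each has only finitely many simples above it in the order, which is what the truncation provides and what the induction actually needs.
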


It is worth pointing out that from Lemma~\ref{lem11}\eqref{lem11.1} it follows that 
each $\Delta(\mu)$, $\mu\in\xi$, has infinite length.

\subsection{Grading}\label{s4.25}

Set $\overline{U}:=U(\overline{\mathfrak{s}})$. The algebra $\overline{U}$ admits a natural 
$\mathbb{Z}$-gra\-ding by setting
\begin{displaymath}
\deg(e)=\deg(f)=\deg(h)=0,\qquad \deg(p)=\deg(q)=1.
\end{displaymath}
Note that for any $\xi\in \mathfrak{h}^*/\mathbb{Z}R$ of zero central charge any object in $\mathcal{O}[\xi]$
is, in fact, a $\overline{U}$-module. This can be used to define the following {\em graded lift} 
$\overline{\mathcal{O}}[\xi]$ of the  category $\mathcal{O}[\xi]$: The category $\overline{\mathcal{O}}[\xi]$
is defined as the full subcategory of the category of $\mathbb{Z}$-graded $\overline{U}$-modules which belong 
to $\mathcal{O}[\xi]$ after forgetting the grading (cf. \cite{CG}). We denote by 
$\Theta_{\xi}:\overline{\mathcal{O}}[\xi]\to\mathcal{O}[\xi]$ the forgetful functor. Morphisms in 
$\overline{\mathcal{O}}[\xi]$ are homogeneous $\overline{U}$-homomorphisms of degree zero. 

From now on by {\em graded} we always mean {\em $\mathbb{Z}$-graded}. A graded vector space $V$ is written as 
\begin{displaymath}
V=\bigoplus_{i\in\mathbb{Z}}V_i.
\end{displaymath}
For $k\in\mathbb{Z}$ we denote by $\langle k\rangle$ the {\em shift of the grading} functor normalized
as follows: $V\langle k\rangle_i:=V_{i-k}$.

An object  $M\in \mathcal{O}[\xi]$ is called {\em gradable} provided that there is 
$\overline{M}\in \overline{\mathcal{O}}[\xi]$ such that $\Theta_{\xi}\,\overline{M}\cong M$.
If $M\in \mathcal{O}[\xi]$ is an $\mathfrak{sl}_2$-module, that is $pM=qM=0$, then $M$ is gradable by
setting, for $i\in\mathbb{Z}$,
\begin{displaymath}
\overline{M}_i:=\begin{cases}M,& i=0;\\0,&i\neq 0.\end{cases}
\end{displaymath}
We will call this $\overline{M}$ the {\em standard graded lift} of $M$.
In particular, all simple objects in $\mathcal{O}[\xi]$ are gradable. Note that a Verma $\overline{U}$-module
is defined as the quotient of $\overline{U}$ modulo a left ideal generated by homogeneous elements. Hence
all Verma $\overline{U}$-modules are gradable. It is easy to check that $M\oplus N$ is gradable if and
only if $M$ and $N$ are. 

In the standard way the duality $\star$ admits a graded lift which we will denote by the same symbol.
We have the following isomorphism of $\mathfrak{sl}_2$-modules: $(M^{\star})_i\cong (M_{-i})^{\star}$.

\subsection{Non-integral blocks}\label{s4.3}

Let $\xi\in \mathfrak{h}^*/\mathbb{Z}R$ be of zero central charge and assume that $\xi$ is {\em non-integral}
in the sense that $\lambda(h)\not\in\mathbb{Z}$ for some (and hence for any) $\lambda\in\xi$. Consider the
following two quivers:
\begin{displaymath}
{}_{\infty}\mathbf{Q}:\qquad\qquad
\xymatrix{
\ldots\ar@/^/[rr]^{a} && \mathtt{2}\ar@/^/[rr]^{a}\ar@/^/[ll]^{b}
&& \mathtt{1}\ar@/^/[rr]^{a}\ar@/^/[ll]^{b}&& \mathtt{0}\ar@/^/[ll]^{b}
}
\end{displaymath}
and
\begin{displaymath}
{}_{\infty}\mathbf{Q}_{\infty}:\quad\,\,
\xymatrix{
\ldots\ar@/^/[rr]^{a} && \text{-}\mathtt{1}\ar@/^/[rr]^{a}\ar@/^/[ll]^{b}
&& \mathtt{0}\ar@/^/[rr]^{a}\ar@/^/[ll]^{b}&& \mathtt{1}\ar@/^/[ll]^{b}
\ar@/^/[rr]^{a}&& \ldots\ar@/^/[ll]^{b}
}
\end{displaymath}
with imposed commutativity relation $ab=ba$ (which includes $ab=0$ for the vertex $\mathtt{0}$ in the
quiver ${}_{\infty}\mathbf{Q}$).
We denote by ${}_{\infty}\mathbf{Q}\text{-}\mathrm{lfmod}$ the category of locally finite dimensional
${}_{\infty}\mathbf{Q}$-modules (in which $ab=ba$ as above). We also denote by 
${}_{\infty}\mathbf{Q}_{\infty}^+\text{-}\mathrm{lfmod}$ the category of locally finite dimensional
${}_{\infty}\mathbf{Q}_{\infty}$-modules  (in which $ab=ba$) that are {\em bounded from the right}, 
that is modules in which $\mathtt{i}$ is represented by the zero vector space for all $i\gg0$.

\begin{theorem}\label{thm15}
Let $\xi\in \mathfrak{h}^*/\mathbb{Z}R$ be non-integral and of zero central charge.
\begin{enumerate}[$($i$)$]
\item\label{thm15.1} For every $\lambda\in\xi$ the category $\mathcal{O}[\xi,\lambda]$
is equivalent to ${}_{\infty}\mathbf{Q}\text{-}\mathrm{lfmod}$.
\item\label{thm15.2} The category $\mathcal{O}[\xi]$
is equivalent to ${}_{\infty}\mathbf{Q}_{\infty}^+\text{-}\mathrm{lfmod}$.
\end{enumerate}
\end{theorem}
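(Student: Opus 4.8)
The two parts are closely linked: part~\eqref{thm15.2} will follow from part~\eqref{thm15.1} by passing to the inductive limit, since $\mathcal{O}[\xi]$ is the direct limit of the $\mathcal{O}[\xi,\lambda]$ along the inclusions $\mathcal{O}[\xi,\lambda]\hookrightarrow\mathcal{O}[\xi,\lambda+h^{\checkmark}]$, and on the quiver side ${}_{\infty}\mathbf{Q}_{\infty}^+\text{-}\mathrm{lfmod}$ is visibly the direct limit of the categories ${}_{\infty}\mathbf{Q}\text{-}\mathrm{lfmod}$ obtained by truncating ${}_{\infty}\mathbf{Q}_{\infty}$ to the vertices $\mathtt{0},\mathtt{1},\mathtt{2},\dots$ at positions $\le$ some bound (the asymmetry of ${}_{\infty}\mathbf{Q}$, with $ab=0$ only at the extreme vertex $\mathtt{0}$, is exactly what a one-sided truncation of ${}_{\infty}\mathbf{Q}_{\infty}$ produces). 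So the bulk of the work is \eqref{thm15.1}, and I will concentrate on that.

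\textbf{Strategy for \eqref{thm15.1}: compute the Ext-quiver and relations of $\mathcal{O}[\xi,\lambda]$.} Fix $\lambda\in\xi$ and write $\mu_i:=\lambda-ih^{\checkmark}$ for $i\in\mathbb{Z}_+$, so the simple modules are $L(\mu_i)$. By Proposition~\ref{prop12}, $\mathcal{O}[\xi,\lambda]$ is a highest weight category with indecomposable projectives $P^{(\lambda)}(\mu_i)$, each admitting a Verma flag in which only $\Delta(\mu_j)$ with $j\le i$ occur. The first job is to pin down the structure of the Verma modules themselves in this block. Since $\xi$ is non-integral, the dot-orbit $W\cdot\mu$ of any weight $\mu$ meets $\xi$ in at most the two points $\mu$ and $r\cdot\mu=-\mu-3h^{\checkmark}$, and $r\cdot\mu\notin\mu-\mathbb{Z}_+R_+$ unless $\mu(h)\le-3/2$; combined with Lemma~\ref{lem11}\eqref{lem11.1} (the inclusion $\Delta(\mu-h^{\checkmark})\hookrightarrow\Delta(\mu)$ given by $v\mapsto qv$) and a $\mathtt{c}$-eigenvalue / central-character argument, I expect to show that each $\Delta(\mu_i)$ has exactly the composition series with factors $L(\mu_i),L(\mu_{i+1}),L(\mu_{i+2}),\dots$ each with multiplicity one, i.e. $[\Delta(\mu_i):L(\mu_j)]=1$ for $j\ge i$ and $0$ otherwise; the key input is that $qv_{\mu_i}$ generates $\Delta(\mu_{i+1})$ and the quotient $\Delta(\mu_i)/\Delta(\mu_{i+1})$, which has Gelfand--Kirillov dimension $\le 1$, must then be simple equal to $L(\mu_i)$ (one checks via the commutation relations, as in the proof of Proposition~\ref{prop4}, that no further highest weight vectors appear). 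Feeding this into BGG-reciprocity (Corollary~\ref{cor14}) gives $[P^{(\lambda)}(\mu_i):\Delta(\mu_j)]=1$ for $j\le i$ and $0$ otherwise, so $\dim P^{(\lambda)}(\mu_i)=i+1$ many Verma layers, each of which is a (possibly large but finite-dimensional in each weight) module.

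\textbf{Reading off quiver and relations.} From the Verma-flag multiplicities and the self-duality $\star$ of $\mathcal{O}[\xi,\lambda]$, the Loewy structure of $P^{(\lambda)}(\mu_0)$ must be uniserial $L(\mu_0)/L(\mu_1)/\dots$ truncated appropriately — actually $P^{(\lambda)}(\mu_0)=\Delta(\mu_0)$ restricted to the truncation, with radical filtration having $L(\mu_1)$ on top of the radical, so $\dim\mathrm{Ext}^1(L(\mu_0),L(\mu_1))=1$, $\mathrm{Ext}^1(L(\mu_0),L(\mu_j))=0$ for $j\ne1$; and for $i\ge1$, $P^{(\lambda)}(\mu_i)$ has two Verma layers $\Delta(\mu_i)$ (bottom, a submodule since the truncated block is highest weight) and $\Delta(\mu_{i-1})$ on top, forcing the radical of $P^{(\lambda)}(\mu_i)$ to have $L(\mu_{i-1})\oplus L(\mu_{i+1})$ in its top (with the $L(\mu_{i+1})$ coming from inside $\Delta(\mu_i)$), hence $\dim\mathrm{Ext}^1(L(\mu_i),L(\mu_{i\pm1}))=1$ and all other $\mathrm{Ext}^1$ between simples vanish. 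This identifies the Ext-quiver with ${}_{\infty}\mathbf{Q}$: arrows $a:\mathtt{i}\!\to\!\mathtt{i+1}$ and $b:\mathtt{i+1}\!\to\!\mathtt{i}$ for all $i\ge0$. For the relations, I compute $\mathrm{rad}^2 P^{(\lambda)}(\mu_i)$ and compare paths of length two: the path $b a$ at vertex $\mathtt{i}$ (going $\mathtt{i}\to\mathtt{i+1}\to\mathtt{i}$, i.e. "into $\Delta(\mu_i)$ and back") and the path $ab$ at vertex $\mathtt{i}$ (going $\mathtt{i}\to\mathtt{i-1}\to\mathtt{i}$) must be nonzero and equal up to scalar when $i\ge1$ — this is the commutativity relation $ab=ba$ — because $P^{(\lambda)}(\mu_i)$ has a single $L(\mu_i)$ in $\mathrm{rad}^2$ reached along both routes, coming from the $\Delta(\mu_{i-1})\supset\Delta(\mu_i)$ nesting; and at vertex $\mathtt{0}$ the path $ab$ is empty while $ba$ must vanish because $P^{(\lambda)}(\mu_0)$ has Loewy length $2$ (it has only two composition factors once we also remember which simples survive truncation — more carefully: $\mathrm{rad}^2\Delta(\mu_0)$ contributes $L(\mu_2)$ not $L(\mu_0)$, so the length-two path $\mathtt{0}\to\mathtt{1}\to\mathtt{0}$ is zero), giving $ab=0$ at $\mathtt{0}$. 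A standard argument (Gabriel's theorem / the fact that a basic finite-dimensional-over-each-object algebra is determined by quiver and relations once one knows these generate, cf. the treatment in \cite[Chapter~5]{Ma}) then upgrades the computation of $\mathrm{Ext}^1$ and $\mathrm{rad}^2$ to the full equivalence $\mathcal{O}[\xi,\lambda]\cong{}_{\infty}\mathbf{Q}\text{-}\mathrm{lfmod}$, matching $L(\mu_i)\leftrightarrow$ simple at vertex $\mathtt{i}$, $\Delta(\mu_i)\leftrightarrow$ the corresponding standard module, $P^{(\lambda)}(\mu_i)\leftrightarrow$ the projective cover.

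\textbf{Main obstacle.} The delicate point is not the quiver — the $\mathrm{Ext}^1$-computation follows cleanly from BGG-reciprocity and the Verma filtrations — but verifying that there are \emph{no higher relations}, i.e. that the path algebra of ${}_{\infty}\mathbf{Q}$ modulo $ab=ba$ really is the basic algebra of $\mathcal{O}[\xi,\lambda]$ and not a proper quotient of it. Concretely, one must check that the candidate projective of dimension vector $(0,\dots,0,1,1,1,0,\dots)$ — i.e. the ${}_{\infty}\mathbf{Q}$-module $\mathtt{i}\!-\!1\xleftarrow{b}\mathtt{i}\xrightarrow{a}\mathtt{i}\!+\!1$ with $ba$ and $ab$ acting as the appropriate nonzero maps — has the same composition length $i+1$ (respectively $\le$ as forced by truncation) as $P^{(\lambda)}(\mu_i)$, so that no relations are missing. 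Equivalently, one must rule out $ba=0$ (at vertices $\ge1$) and $(ab)^2=0$-type relations by exhibiting explicit length-two (and no longer) nonzero paths in $P^{(\lambda)}(\mu_i)$, which I would do by an explicit computation inside $\Delta(\mu_{i-1})$ using the embedding $\Delta(\mu_i)\hookrightarrow\Delta(\mu_{i-1})$ of Lemma~\ref{lem11} and the action of a lift of the arrow $b$ (a homomorphism $\Delta(\mu_{i-1})\to P^{(\lambda)}(\mu_i)$ whose image is not killed by the composite). The self-duality $\star$ helps a great deal here, forcing the Loewy length of $P^{(\lambda)}(\mu_i)$ to be symmetric and hence exactly $3$ for $i\ge1$, which is precisely what the relation $ab=ba$ (with no further relations) predicts.
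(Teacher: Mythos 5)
Your overall skeleton agrees with the paper's: compute $\mathrm{Ext}^1$ between simples from the fact that $\mathrm{Rad}(\Delta(\mu_i))\cong\Delta(\mu_{i+1})$ (using Lemma~\ref{lem11} and non-integrality to see that the quotient is already a simple $\mathfrak{sl}_2$-Verma), transfer to the other direction by $\star$, identify the quiver as ${}_{\infty}\mathbf{Q}$, then establish the commutativity relation and rule out further relations; part~\eqref{thm15.2} by direct limit is exactly the paper's reduction. You also correctly isolate the two delicate points: that $ab$ and $ba$ are proportional with \emph{nonzero} scalar at interior vertices, and that no higher relations occur. However, you do not actually carry out the first point (you only say you ``would do'' an explicit computation); this is precisely where the paper needs its graded lift, the one-dimensionality of degree-$2$ self-maps of $\overline{P}(k)$ (Lemma~\ref{lem227}), and the explicit element computation of Lemma~\ref{lem228} with $w_1=q-\tfrac{1}{\lambda(h)}fp$, where non-integrality enters a second, essential time through $\lambda(h)\neq 0$ and $1+\tfrac{1}{\lambda(h)}\neq 0$. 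Without some version of that computation the relation could a priori be $ba=0$ at every vertex, which would give a different algebra.

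More seriously, your handling of the projectives is internally inconsistent and partly false, and this breaks the ``no higher relations'' step. From your own BGG-reciprocity computation, $P^{(\lambda)}(\mu_i)$ has $i+1$ Verma layers $\Delta(\mu_0),\dots,\Delta(\mu_i)$, each of \emph{infinite} length; so the later claims that $P^{(\lambda)}(\mu_i)$ has ``two Verma layers'', has ``composition length $i+1$'', or has ``Loewy length exactly $3$'' are all wrong (already $P^{(\lambda)}(\mu_0)=\Delta(\lambda)$ is uniserial of infinite length, and $\star$ sends projectives to injectives, not to themselves). Likewise, the projective at vertex $\mathtt{i}$ of $\mathbb{C}{}_{\infty}\mathbf{Q}/(ab-ba)$ is not the three-dimensional module with dimension vector $(\dots,0,1,1,1,0,\dots)$ --- that would be the projective of the quiver with all length-two paths killed; here it is locally finite-dimensional of infinite length, with the simple at vertex $\mathtt{k}$ occurring $\min(i,k)+1$ times. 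Consequently the length comparison you propose cannot detect missing relations. The correct way to close this gap, as in the paper, is to observe that ${}_{\infty}\mathbf{Q}\text{-}\mathrm{lfmod}$ is itself a highest weight category whose standard modules have the same composition multiplicities as the $\Delta(\mu_i)$, so that BGG reciprocity on both sides forces the full Cartan data of the two categories to coincide; since the basic algebra of $\mathcal{O}[\xi,\lambda]$ is a quotient of the quiver algebra, equality of Cartan data gives the isomorphism.
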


\begin{proof}
For $i\in\mathbb{Z}_+$ we assign to the simple object $L(\lambda-ih^{\checkmark})$ the vertex $\mathtt{i}$ 
in the quiver ${}_{\infty}\mathbf{Q}$. First we claim that for all $i,j\in \mathbb{Z}_+$ such that $i\leq j$
we have
\begin{equation}\label{eq17}
\mathrm{Ext}_{\mathcal{O}}^1(L(\lambda-ih^{\checkmark}),L(\lambda-jh^{\checkmark}))\cong
\begin{cases}
\mathbb{C}, & j=i+1;\\
0, & \text{otherwise}.
\end{cases}
\end{equation}
Indeed, consider a non-split short exact sequence
\begin{displaymath}
0\to  L(\lambda-jh^{\checkmark})\to X\to L(\lambda-ih^{\checkmark})\to 0.
\end{displaymath}
Then $X$ is generated by a highest weight vector of weigh $\lambda-ih^{\checkmark}$ and hence is a quotient
of $\Delta(\lambda-ih^{\checkmark})$. The latter module has simple top. By Lemma~\ref{lem11}\eqref{lem11.1} we 
have an inclusion $\Delta(\lambda-(i+1)h^{\checkmark})\hookrightarrow\Delta(\lambda-ih^{\checkmark})$
and the quotient is simple, by character argument, already as an $\mathfrak{sl}_2$-module, since 
$\xi$ is non-integral. This means that
\begin{displaymath}
\Delta(\lambda-(i+1)h^{\checkmark})\cong \mathrm{Rad}(\Delta(\lambda-ih^{\checkmark})).
\end{displaymath}
Since $\Delta(\lambda-(i+1)h^{\checkmark})$ has simple top $L(\lambda-(i+1)h^{\checkmark})$, we get 
formula \eqref{eq17}.

Using $\star$ and the fact that $\star$ preserves isomorphism classes of simple modules, we get
\begin{displaymath}
\mathrm{Ext}_{\mathcal{O}}^1(L(\lambda-ih^{\checkmark}),L(\lambda-jh^{\checkmark}))\cong 
\mathrm{Ext}_{\mathcal{O}}^1(L(\lambda-jh^{\checkmark}),L(\lambda-ih^{\checkmark}))
\end{displaymath}
which, together with \eqref{eq17}, says that the quiver of the category $\mathcal{O}[\xi,\lambda]$
is exactly the underlying quiver of ${}_{\infty}\mathbf{Q}$.  Note that non-split extensions between 
$L(\lambda-ih^{\checkmark})$ and $L(\lambda-(i+1)h^{\checkmark})$ are given (inside 
$\Delta(\lambda-ih^{\checkmark})$ and $\Delta(\lambda-ih^{\checkmark})^{\star}$, respectively) by the action
of $p$ or $q$, respectively, and we have $pq-qp=z=0$ as we are in the situation of 
zero central charge. This suggests that the relations in the quiver should be commutativity relations.
However, to rigorously prove the letter guess it is convenient to consider the graded lift. 

Define $\overline{\mathcal{O}}[\xi,\lambda]$ as the full subcategory of the category of graded $\overline{U}$-modules which belong  to $\mathcal{O}[\xi,\lambda]$ after forgetting the grading. We claim that indecomposable
projective modules in $\mathcal{O}[\xi,\lambda]$ are gradable. Indeed, using the original construction of \cite{BGG},
indecomposable projective modules in $\mathcal{O}[\xi,\lambda]$ are direct summand of the following
projective objects (here $k\in\mathbb{Z}_+$):
\begin{displaymath}
P_{(k)}:=\overline{U}/\overline{U}(h-(\lambda-kh^{\checkmark})(h),p^{k+1},e^{\lceil\frac{k+1}{2}\rceil}).
\end{displaymath}
As both $h-(\lambda-kh^{\checkmark})(h)$, $p^{k+1}$ and $e^{\lceil\frac{k+1}{2}\rceil}$ are homogeneous 
elements, it follows that $P_{(k)}$ is gradable.

For $k\in\mathbb{Z}_+$ let $\overline{P}(k)$ denote the indecomposable graded projective such that 
$\overline{P}(k)\tto \overline{L}(\lambda-kh^{\checkmark})$. Set $\overline{I}(k):=\overline{P}(k)^{\star}$.
Then $\overline{I}(k)$ is the indecomposable graded injective envelope of $\overline{L}(\lambda-kh^{\checkmark})$.
The full subcategory of $\mathcal{O}[\xi,\lambda]$ with objects $\Theta_{\xi}\overline{P}(k)$, $k\in\mathbb{Z}_+$,
is thus graded, which implies that the quiver of $\mathcal{O}[\xi,\lambda]$ is graded as well. 
In particular, the whole highest weight structure on $\mathcal{O}[\xi,\lambda]$ is gradable (in the sense of
\cite{MO}).

\begin{lemma}\label{lem227}
For every $k\in\mathbb{N}$ there are unique (up to a nonzero scalar) nonzero homomorphism as follows:
\begin{enumerate}[$($a$)$]
\item\label{lem227.1} $\overline{P}(k\pm 1)\langle -1\rangle\to \overline{P}(k)$;
\item\label{lem227.2} $\overline{P}(k)\langle -2\rangle\to \overline{P}(k)$.
\end{enumerate}
\end{lemma}

\begin{proof}
With respect to our grading we have $\overline{U}_0=U(\mathfrak{sl}_2)$ and
$\overline{U}_1=V\otimes U(\mathfrak{sl_2})$ where  $V=\mathfrak{i}/\mathbb{C}z$ is the $2$-dimensional 
$\mathfrak{sl}_2$-module spanned by $p$ and $q$. Clearly, we have
$\overline{U}\otimes_{U(\mathfrak{sl}_2)}\overline{L}(\lambda-kh^{\checkmark})\tto \overline{P}(k)$.
This implies that $V\otimes \overline{L}(\lambda-kh^{\checkmark})\tto \overline{P}(k)_1$.
A character argument combined with our computation of extensions above gives
\begin{displaymath}
V\otimes \overline{L}(\lambda-kh^{\checkmark})\cong  
\overline{L}(\lambda-(k-1)h^{\checkmark})\oplus \overline{L}(\lambda-(k+1)h^{\checkmark}). 
\end{displaymath}
As $\overline{P}(k)\tto \overline{\Delta}(\lambda-kh^{\checkmark})$ and
$\overline{\Delta}(\lambda-(k-1)h^{\checkmark})\langle-1\rangle
\hookrightarrow\overline{\Delta}(\lambda-kh^{\checkmark})$
by Lemma~\ref{lem11}\eqref{lem11.1}, we get that $\overline{P}(k)_1$ 
contains $\overline{L}(\lambda-(k-1)h^{\checkmark})\langle-1\rangle$.
Using $\star$ we get that $\overline{P}(k)_1$ contains $\overline{L}(\lambda-(k+1)h^{\checkmark})\langle-1\rangle$.
Claim~\eqref{lem227.1} follows.

Consider a Verma flag of $\overline{P}(k)$. It contains the subquotient $\overline{\Delta}(\lambda-kh^{\checkmark})$
and, clearly, $[\overline{\Delta}(\lambda-kh^{\checkmark}):\overline{L}(\lambda-kh^{\checkmark})\langle-2\rangle]=0$.
From claim~\eqref{lem227.1} we also have the subquotient 
$\overline{\Delta}(\lambda-(k-1)h^{\checkmark})\langle-1\rangle$ and the multiplicity 
$[\overline{\Delta}(\lambda-(k-1)h^{\checkmark})\langle-1\rangle:
\overline{L}(\lambda- kh^{\checkmark})\langle-2\rangle]=1$. Any other Verma subquotients are of the form
$\overline{\Delta}(\lambda-jh^{\checkmark})\langle -i\rangle$ where $j<k$ and $i\geq 2$. For these subquotients
we have $[\overline{\Delta}(\lambda-jh^{\checkmark})\langle -i\rangle:
\overline{L}(\lambda- kh^{\checkmark})\langle-2\rangle]=0$. Claim~\eqref{lem227.2} follows. 
\end{proof}

From Lemma~\ref{lem227}\eqref{lem227.1} we get that the grading on $\overline{\mathcal{O}}[\xi,\lambda]$ 
agrees with the usual grading on ${}_{\infty}\mathbf{Q}$ in which  each arrow has degree one.
For $k\in\mathbb{Z}_+$ fix some nonzero homomorphisms
\begin{displaymath}
\varphi_k: \overline{P}(k)\langle -1\rangle\to \overline{P}(k+1)\quad\text{ and }\quad
\psi_k: \overline{P}(k+1)\langle -1\rangle\to \overline{P}(k).
\end{displaymath}
From Lemma~\ref{lem227}\eqref{lem227.2}, for $k>0$ the homomorphisms
$\psi_k\langle -1\rangle\circ \varphi_k$ and $\varphi_{k-1}\langle -1\rangle\circ \psi_{k-1}$ 
are linearly dependent. Note that $\psi_0\langle -1\rangle\circ \varphi_0=0$ as
$\overline{P}(0)$ is a Verma module and hence
$[\overline{P}(0):\overline{L}(0)\langle-2\rangle]=0$.

\begin{lemma}\label{lem228}
For every $k\in\mathbb{N}$ there is a nonzero scalar $a_k\in\mathbb{C}$ such that 
$\psi_k\langle -1\rangle\circ \varphi_k-a_k\varphi_{k-1}\langle -1\rangle\circ \psi_{k-1}=0$.
\end{lemma}

\begin{proof}
From Lemma~\ref{lem227}\eqref{lem227.2} we have that  there is a unique (up to scalar) nonzero morphism
from $\overline{P}(k)$ to $\overline{I}(k)\langle -2\rangle$. Let $N$ be its image. The statement of the lemma
is equivalent to saying that we have the following isomorphism of the first graded component: 
$N_{1}\cong \overline{P}(k)_1$. From the proof of Lemma~\ref{lem227} we know that
\begin{displaymath}
\overline{P}(k)_1\cong \overline{L}(\lambda-(k-1)h^{\checkmark})\langle-1\rangle\oplus
\overline{L}(\lambda-(k+1)h^{\checkmark})\langle-1\rangle.
\end{displaymath}
Therefore, replacing $\lambda$ by $\lambda-(k-1)h^{\checkmark}$, we may assume $k=1$.
In this case we have that $\overline{P}(1)\cong P_{(1)}$ so we identify these two modules.
This allows us to do the following explicit computations (in which we identify elements of $U$
with their images in the corresponding modules).

Denote by $X$ the quotient of $\overline{P}(1)$ by the submodule $\overline{P}(1)_3+\overline{P}(1)_4+\dots$
and by $Y$ the quotient of $\overline{P}(1)$ by the submodule $\overline{P}(1)_2+\overline{P}(1)_3+\dots$.
The submodule $L(\lambda-2h^{\checkmark})\langle-1\rangle$ of $Y$ is generated by the highest weight element
$w_1:=q-\frac{1}{\lambda(h)}fp$ (note that $\lambda(h)\neq 0$ as we are in the situation of a non-integral block).
The submodule $L(\lambda)\langle-1\rangle$ of $Y$ is generated by the highest weight element
$w_2:=p$. Let $w'_1$ and $w'_2$ be some preimages in $X$ of $w_1$ and $w_2$, respectively.
Then we have $qw'_2=pq$ and also $pw'_1=(1+\frac{1}{\lambda(h)})pq$ in $X$. Again note that
$1+\frac{1}{\lambda(h)}\neq 0$ as we are in the situation of a non-integral block. 
The element $pq$ is exactly the highest weight element of the submodule $L(\lambda-h^{\checkmark})\langle-2\rangle$
in $X$. So, we have just proved that the action of $U$ on both composition subquotients of 
$\overline{P}(1)_1$ leads to a nonzero contribution to $L(\lambda-h^{\checkmark})\langle-2\rangle$.
The implies $N_{1}\cong \overline{P}(k)_1$ and the claim of the lemma follows.
\end{proof}

From Lemma~\ref{lem228} it follows that, rescaling the $\varphi_{k}$'s, if necessary, we may assume that 
$\psi_k\langle -1\rangle\circ \varphi_k=\varphi_{k-1}\langle -1\rangle\circ \psi_{k-1}$.
This means that the quiver of $\mathcal{O}[\xi,\lambda]$ is a quotient of 
${}_{\infty}\mathbf{Q}$. To prove that they coincide we have just to compare the Cartan
data of both categories.

It is easy to check that the category ${}_{\infty}\mathbf{Q}\text{-}\mathrm{lfmod}$ is a highest weight
category with respect to the order $\dots<\mathtt{2}<\mathtt{1}<\mathtt{0}$, with standard modules
having the following form:
\begin{displaymath}
\xymatrix{ 
\ldots\ar@/^/[r]^0&\mathbb{C}\ar@/^/[r]^0\ar@/^/[l]^{\mathrm{id}}&\mathbb{C}\ar@/^/[r]^0\ar@/^/[l]^{\mathrm{id}}&
0\ar@/^/[r]^0\ar@/^/[l]^0&\ldots\ar@/^/[r]^0\ar@/^/[l]^0&0\ar@/^/[r]^0\ar@/^/[l]^0&0\ar@/^/[l]^0
}
\end{displaymath}
Note that the multiplicities of simple subquotients in this module are the same as the corresponding
multiplicities of simple subquotients in $\Delta(\lambda-ih^{\checkmark})$ (under our identification of 
$L(\lambda-jh^{\checkmark})$ with $\mathtt{j}$). From the BGG reciprocity we get that the characters
of indecomposable projective modules in ${}_{\infty}\mathbf{Q}\text{-}\mathrm{lfmod}$ and
$\mathcal{O}[\xi,\lambda]$ match. This implies claim~\eqref{thm15.1}. 
Claim~\eqref{thm15.2} follows from claim~\eqref{thm15.1} by taking
the direct limit.
\end{proof}

\subsection{Finite dimensional part of $\mathcal{O}$}\label{s4.4}

Let $\xi\in \mathfrak{h}^*/\mathbb{Z}R$ be of zero central charge and {\em integral} in the sense that 
$\lambda(h)\in\mathbb{Z}$ for some (and hence for all) $\lambda\in\xi$. 

Denote by $\mathcal{O}^{f}$ the full subcategory of $\mathcal{O}$ consisting of all finite-di\-men\-si\-onal
modules in $\mathcal{O}$. Simple finite dimensional $\mathfrak{s}$-modules are exactly simple 
finite dimensional $\mathfrak{sl}_2$-modules. For $i\in\mathbb{Z}_+$ we denote by $\lambda_i$ the highest
weight of the simple $i+1$-dimensional $\mathfrak{s}$-module.
The category $\mathcal{O}^{f}$ is a subcategory of the integral
block $\mathcal{O}[\xi]$ of zero central charge. Namely, it is the Serre subcategory generated by all
$L(\lambda_i)$, $i\in\mathbb{Z}_+$. 

Consider the following quiver:
\begin{displaymath}
\mathbf{Q}_{\infty}:\quad\,\,
\xymatrix{
\mathtt{0}\ar@/^/[rr]^{a}&& \mathtt{1}\ar@/^/[ll]^{b}
\ar@/^/[rr]^{a}&& \mathtt{2}\ar@/^/[ll]^{b}
\ar@/^/[rr]^{a}&& \ldots\ar@/^/[ll]^{b}
}
\end{displaymath}
with imposed commutativity relation $ab=ba$ (which includes the relation $ba=0$ for the vertex $\mathtt{0}$).
We denote by $\mathbf{Q}_{\infty}\text{-}\mathrm{fmod}$ the category of finite dimensional
$\mathbf{Q}_{\infty}$-modules (in which $ab=ba$ as above) that is modules in which each $\mathtt{i}$ 
is represented by a finite dimensional vector space and these vector spaces are zero for all but finitely
many $\mathtt{i}$.

\begin{theorem}\label{thm301}
The categories $\mathcal{O}^{f}$ and $\mathbf{Q}_{\infty}\text{-}\mathrm{fmod}$ are equivalent.
\end{theorem}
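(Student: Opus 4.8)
The strategy is to show that $\mathcal{O}^{f}$ is a highest weight category with the same combinatorial data as $\mathbf{Q}_{\infty}\text{-}\mathrm{fmod}$, and then pin down the relations by an explicit computation in a single projective module, exactly in the spirit of the proof of Theorem~\ref{thm15}. First I would record that the simple objects of $\mathcal{O}^{f}$ are the finite dimensional simples $L(\lambda_i)$, $i\in\mathbb{Z}_+$, so we may label the vertex $\mathtt{i}$ of $\mathbf{Q}_{\infty}$ by $L(\lambda_i)$. Observe that $\mathcal{O}^f$ is a Serre subcategory of the integral block $\mathcal{O}[\xi]$ of zero central charge, so we may use the graded machinery of Subsection~\ref{s4.25}: every $L(\lambda_i)$ is an $\mathfrak{sl}_2$-module, hence gradable via its standard graded lift. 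The first real step is to compute $\mathrm{Ext}^1_{\mathcal{O}^f}(L(\lambda_i),L(\lambda_j))$. A non-split extension $0\to L(\lambda_j)\to X\to L(\lambda_i)\to 0$ with $i<j$ forces $X$ to be a finite dimensional quotient of $\Delta(\lambda_i)$; analysing the submodule generated by $pv_{\lambda_i}$ (which is a highest weight vector of weight $\lambda_i-h^{\checkmark}=\lambda_{i-1}$ since $z$ acts as $0$) together with the character of $\Delta(\lambda_i)$ shows that the only nontrivial extensions are between neighbouring vertices $\mathtt{i}$ and $\mathtt{i+1}$, each one-dimensional, with the exception that there is \emph{no} extension from $L(\lambda_1)$ downward into $L(\lambda_0)$ split off the finite dimensional world — this is the source of the extra relation $ba=0$ at the vertex $\mathtt{0}$. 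Applying the duality $\star$, which fixes all simples, gives symmetry of $\mathrm{Ext}^1$, so the quiver of $\mathcal{O}^f$ has exactly the underlying graph of $\mathbf{Q}_{\infty}$.

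Next I would establish that $\mathcal{O}^f$ has enough projectives and is a highest weight category with standard objects some finite dimensional Verma-like modules. Here the point is that $\mathcal{O}^f$ is \emph{not} the truncation $\mathcal{O}[\xi,\lambda]$ (Verma modules are infinite dimensional), so one must construct projective covers internally: for each $i$ the module $P^{(\xi)}(\lambda_i)$ in $\mathcal{O}^f$ projecting onto $L(\lambda_i)$ is obtained by truncating an appropriate induced module — concretely a quotient of $U/I$ for $I$ the left ideal killing everything of $h$-weight outside the finite dimensional range — and one checks projectivity via the usual exact-functor argument ($\mathrm{Hom}_{\mathcal{O}^f}(P,N)\cong N_{\lambda_i}$). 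Using the graded lifts $\overline{P}(i)$ one sees, just as in Lemma~\ref{lem227}, that $V\otimes\overline{L}(\lambda_i)\cong\overline{L}(\lambda_{i-1})\oplus\overline{L}(\lambda_{i+1})$ (with the $\overline{L}(\lambda_{-1})$ term absent for $i=0$), which identifies the graded radical layers and shows the quiver is graded with arrows in degree one. The analogue of Lemma~\ref{lem228} — that $\psi_k\langle-1\rangle\circ\varphi_k$ and $\varphi_{k-1}\langle-1\rangle\circ\psi_{k-1}$ agree up to a nonzero scalar for $k\ge 1$ — again reduces by shifting $\lambda$ to a single explicit calculation in $\overline{P}(1)$, this time inside the \emph{finite dimensional} cover, where one computes $qw'_2$ and $pw'_1$ modulo higher graded components and checks both hit the generator of $L(\lambda_0)\langle-2\rangle$ nontrivially; the coefficients that were $1+\frac{1}{\lambda(h)}$ in the non-integral case are now honest rational numbers, and the content is that they are nonzero. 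Rescaling the $\varphi_k$'s then yields $\psi_k\langle-1\rangle\circ\varphi_k=\varphi_{k-1}\langle-1\rangle\circ\psi_{k-1}$, i.e.\ the commutativity relation $ab=ba$, while $\psi_0\langle-1\rangle\circ\varphi_0=0$ gives $ba=0$ at vertex $\mathtt{0}$ since $\overline{P}(0)$ is a Verma module and $[\overline{P}(0):\overline{L}(\lambda_0)\langle-2\rangle]=0$.

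Finally I would close the argument by a dimension count exactly as at the end of the proof of Theorem~\ref{thm15}: the relations just proved show that the basic algebra of $\mathcal{O}^f$ is a quotient of the path algebra of $\mathbf{Q}_{\infty}$ modulo $ab=ba$, $ba=0$, so it suffices to check that the Cartan matrices coincide. One verifies directly that $\mathbf{Q}_{\infty}\text{-}\mathrm{fmod}$ is a highest weight category with respect to $\mathtt{0}<\mathtt{1}<\mathtt{2}<\cdots$, with standard modules supported on $\{\mathtt{i-1},\mathtt{i}\}$ with $a$ acting as the identity $\mathbb{C}\to\mathbb{C}$ and everything else zero; these match the composition series of the standard objects of $\mathcal{O}^f$ (for $i=0$ the standard object is just $L(\lambda_0)$ itself). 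BGG-reciprocity then forces the projective characters, hence the Cartan data, to agree, yielding the desired equivalence. The main obstacle I anticipate is the middle step: unlike in the non-integral block, one cannot simply borrow the truncated-category framework of Subsection~\ref{s4.2}, so some care is needed to set up projective covers inside $\mathcal{O}^f$ directly and to carry out the explicit computation in $\overline{P}(1)$ with the correct normalizations at the boundary vertex $\mathtt{0}$, where the relation degenerates to $ba=0$.
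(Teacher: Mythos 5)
Your overall architecture (compute the $\mathrm{Ext}^1$-quiver, pin down the degree-two relations via graded lifts and one explicit computation, then match Cartan data) is the same as the paper's, but the middle step of your plan contains a genuine gap. You propose to ``construct projective covers internally'' in $\mathcal{O}^{f}$ as finite dimensional quotients of modules $U/I$, checked via $\mathrm{Hom}_{\mathcal{O}^f}(P,N)\cong N_{\lambda_i}$. No such objects exist: $\mathcal{O}^{f}$ has no nonzero projectives. Indeed, for each $n$ there is a finite dimensional indecomposable $M_n$ with top $L(\lambda_i)$ and $\dim M_n\to\infty$ (take quotients of $\mathbb{C}[p,q]\otimes L(\lambda_i)$ supported in degrees $\le n$), and any finite dimensional projective surjecting onto $L(\lambda_i)$ would have to surject onto every $M_n$. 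Equivalently, on the quiver side the projective cover of the vertex-$\mathtt{0}$ simple for $\mathbf{Q}_\infty$ with $ab=ba$, $ba=0$ is the infinite string $e_0,a,a^2,\dots$, which lies in $\mathrm{lfmod}$ but not in $\mathrm{fmod}$. The paper circumvents exactly this by working in the category $\mathcal{X}^-$ of graded $\overline{U}$-modules with finite dimensional graded components: there the functor $\overline{U}\otimes_{U(\mathfrak{sl}_2)}{}_-$ produces the (infinite dimensional) projective covers $\overline{P}(\lambda_i)$, and the identification $\overline{U}\cong\mathbb{C}[p,q]\otimes U(\mathfrak{sl}_2)$ gives the closed formula $\overline{P}(\lambda_i)_j\cong \overline{L}(\lambda_j)\otimes\overline{L}(\lambda_i)$, from which the quiver (degree $1$), the relations (multiplicity of $\overline{L}(\lambda_i)\langle-2\rangle$ in degree $2$) and the full Cartan data all follow from the Clebsch--Gordan rule. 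Your plan needs to be rerouted through such an ambient category; as written, the highest-weight/BGG-reciprocity argument has nothing to apply to.

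Two smaller corrections. First, $p\in\mathfrak{n}_+$, so $pv_{\lambda_i}=0$ in $\Delta(\lambda_i)$; the singular vector you want is $qv_{\lambda_i}$ (and note that in $\mathcal{O}$ the module $L(\lambda_i)$ also has a nontrivial extension with the infinite dimensional simple $L(-(i+2)h^{\checkmark})$, which you must discard because it leaves $\mathcal{O}^f$ --- worth saying explicitly). Second, your justification of $ba=0$ at the vertex $\mathtt{0}$ by ``$\overline{P}(0)$ is a Verma module'' is imported from Theorem~\ref{thm15} and is false here: the relevant projective cover is $\mathbb{C}[p,q]$, not a Verma module; the correct reason is $\overline{P}(\lambda_0)_2\cong\overline{L}(\lambda_2)$, which contains no copy of $\overline{L}(\lambda_0)\langle-2\rangle$.
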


\begin{proof}
We use grading similarly to the proof of Theorem~\ref{thm15}.
Let $\mathcal{X}$ denote the category of all graded $\overline{U}$-modules with finite dimensional graded
components. Let $\mathcal{X}^-$ denote the full subcategory of $\mathcal{X}$ consisting of all $M$ 
satisfying the condition $M_i=0$ for all $i\ll 0$. Consider $U(\mathfrak{sl}_2)$ as a graded 
algebra concentrated in degree zero. Let $\mathcal{Y}$ denote the category of all graded 
$U(\mathfrak{sl}_2)$-modules with finite dimensional graded components. Let $\mathcal{Y}^-$ denote the 
full subcategory of $\mathcal{Y}$ consisting of all $M$ satisfying the condition $M_i=0$ for all $i\ll 0$. 
We have the usual exact restriction functor
$\mathrm{Res}^{\overline{\mathfrak{s}}}_{\mathfrak{sl}_2}: \mathcal{X}^-\to \mathcal{Y}^-$.
As $\overline{U}$ is concentrated in non-negative degrees, the right adjoint of
$\mathrm{Res}^{\overline{\mathfrak{s}}}_{\mathfrak{sl}_2}$ maps $\mathcal{Y}^-$ to $\mathcal{X}^-$:
\begin{displaymath}
\mathrm{Ind}^{\overline{\mathfrak{s}}}_{\mathfrak{sl}_2}=\overline{U}\otimes_{U(\mathfrak{sl}_2)}{}_-: 
\mathcal{Y}^-\to \mathcal{X}^-.  
\end{displaymath}
Being the right adjoint of an exact functor, $\overline{U}\otimes_{U(\mathfrak{sl}_2)}{}_-$ maps projective
modules to projective modules. It follows that $\overline{P}(\lambda_i):=
\overline{U}\otimes_{U(\mathfrak{sl}_2)}\overline{L}(\lambda_i)$
is the indecomposable projective cover of $\overline{L}(\lambda_i)$.

Note that $\overline{U}\cong \mathbb{C}[p,q]\otimes U(\mathfrak{sl}_2)$ and for $j\in\mathbb{Z}_+$ the space 
of homogeneous  polynomials in $\mathbb{C}[p,q]$ of degree $j$ is a simple $j+1$-dimensional 
$\mathfrak{sl}_2$-module under the adjoint action. Therefore, as ungraded $\mathfrak{sl}_2$-module, we have
\begin{equation}\label{eq305}
\overline{P}(\lambda_i)_j\cong \overline{L}(\lambda_j)\otimes \overline{L}(\lambda_i).
\end{equation}
In particular, using the classical Clebsch-Gordon rule for $\mathfrak{sl}_2$, see e.g. \cite[Theorem~1.39]{Ma},
we have:
\begin{displaymath}
\overline{P}(\lambda_i)_1\cong 
\begin{cases}
\overline{L}(\lambda_1), & i=0;\\
\overline{L}(\lambda_{i-1})\oplus \overline{L}(\lambda_{i+1}), & i>0.
\end{cases}
\end{displaymath}
It follows that the underlying quiver of $\mathcal{O}^{f}$ is exactly $\mathbf{Q}_{\infty}$.

As $[\overline{P}(\lambda_0)_2:\overline{L}(\lambda_0)\langle -2\rangle]=0$, we get the relation $ba=0$.
As we have $[\overline{P}(\lambda_i)_2:\overline{L}(\lambda_i)\langle -2\rangle]=1$ for $i>0$, 
we get linear dependence of
$ab$ and $ba$ at each $\mathtt{i}$ for $i>0$. A similar computation as in the proof of  
Theorem~\ref{thm15} implies that after a rescaling this reduces to commutativity relation.
The statement is completed by comparing the Cartan data for $\mathcal{O}^{f}$ (which is computed using
\eqref{eq305} and \cite[Theorem~1.39]{Ma}) and that for $\mathbf{Q}_{\infty}\text{-}\mathrm{fmod}$
(which is a straightforward computation). The claim follows.
\end{proof}

\begin{remark}\label{rem302}
{\em 
For $n\in\mathbb{N}$ let $\mathcal{X}_n$ denote the Serre subcategory in the category
$\mathbf{Q}_{\infty}\text{-}\mathrm{fmod}$ generated by all simple modules corresponding to 
$\mathtt{i}$ for $i\leq n$. From Theorem~\ref{thm301} it follows that
$\mathcal{X}_n$ is equivalent to the category of modules over the following quiver:
\begin{displaymath}
\xymatrix{
\mathtt{0}\ar@/^/[rr]^{a}&& \mathtt{1}\ar@/^/[ll]^{b}
\ar@/^/[rr]^{a}&& \mathtt{2}\ar@/^/[ll]^{b}
\ar@/^/[rr]^{a}&& \ldots\ar@/^/[ll]^{b}\ar@/^/[rr]^{a}&& \ar@/^/[ll]^{b}\mathtt{n}
}
\end{displaymath}
with imposed commutativity relation $ab=ba$ (which includes $ba=0$ for the vertex $\mathtt{0}$
and $ab=0$ for the vertex $\mathtt{n}$).
The path algebra of this quiver is known as the {\em preprojective algebra} of type $A$ as defined in 
\cite{GP}. In particular, this algebra has wild representation type for $n>4$, see \cite[Page~2626]{BES}
(note that our numbering of simples starts with $0$). This agrees with the main result of \cite{Mak}
and implies that the main result in \cite{Wu} is not complete.
}
\end{remark}

\subsection{Integral block}\label{s4.5}

Let $\xi\in \mathfrak{h}^*/\mathbb{Z}R$ be of zero central charge and {\em integral} in the sense that 
$\lambda(h)\in\mathbb{Z}$ for some (and hence for all) $\lambda\in\xi$. Consider the following quiver
which we call $\Gamma$:
\begin{displaymath}
\xymatrix{
&&\mathtt{0}\ar@/^/[rr]^{a}\ar@/^/[dd]^{s}
&&\mathtt{1}\ar@/^/[rr]^{a}\ar@/^/[ll]^{b}\ar@/^/[dd]^{s}
&&\mathtt{2}\ar@/^/[rr]^{a}\ar@/^/[ll]^{b}\ar@/^/[dd]^{s}&&\dots\ar@/^/[ll]^{b}\ar@/^/@{.}[dd]\\
&&&&&&&&\\
\text{-}\mathtt{1}\ar@/^/[rr]^{b'}&&\text{-}\mathtt{2}\ar@/^/[ll]^{a'}\ar@/^/[uu]^{t}\ar@/^/[rr]^{b}&&
\text{-}\mathtt{3}\ar@/^/[ll]^{a}\ar@/^/[rr]^{b}\ar@/^/[uu]^{t}
&&\text{-}\mathtt{4}\ar@/^/[ll]^{a}\ar@/^/[rr]^{b}\ar@/^/[uu]^{t}&&\dots\ar@/^/[ll]^{a}\ar@/^/@{.}[uu]
}
\end{displaymath}
For $n\in\mathbb{Z}$ we denote by $\Gamma_n$ the full subquiver of $\Gamma$
containing all vertices up to $\mathtt{n}$.
Note that each vertical column is the quiver of the principal block of the category $\mathcal{O}$
for $\mathfrak{sl}_2$, see \cite[Section~5.3]{Ma}. 

\begin{proposition}\label{thm16}
Let $\xi\in \mathfrak{h}^*/\mathbb{Z}R$ be integral and of zero central charge.
\begin{enumerate}[$($i$)$]
\item\label{thm16.1} Let $\lambda\in\xi$ be such that $\lambda(h)=n\in\mathbb{Z}$. Then 
$\Gamma_n$ is the Gabriel quiver of the category $\mathcal{O}[\xi,\lambda]$.
\item\label{thm16.2} The quiver $\Gamma$ is the Gabriel quiver for the category $\mathcal{O}[\xi]$.
\end{enumerate}
\end{proposition}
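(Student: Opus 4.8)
The plan is to follow the same strategy used for the non-integral case in Theorem~\ref{thm15} and for $\mathcal{O}^f$ in Theorem~\ref{thm301}, namely: pass to the graded lift, compute $\mathrm{Ext}^1$ between simples to get the underlying quiver, and then pin down the relations by explicit computations in small graded pieces of indecomposable projectives. Claim~\eqref{thm16.2} will follow from \eqref{thm16.1} by taking the direct limit over $\lambda$, exactly as in Theorem~\ref{thm15}, so the work is all in \eqref{thm16.1}.

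First I would set up the combinatorics of the integral block. The dot-action orbit $W\cdot\mu$ for $\mu\in\xi$ has two elements $\mu$ and $r\cdot\mu=-\mu-3h^{\checkmark}$; writing weights as $ih^{\checkmark}$ with $i\in\mathbb{Z}$, these pair up $i$ with $-i-3$. By Lemma~\ref{lem2} (translated to zero central charge via the direct continuity from the structure of Verma modules, or more honestly from the $\mathfrak{sl}_2$-linked structure since every object is an $\overline{\mathfrak{s}}$-module) the linkage classes are exactly these $\mathfrak{sl}_2$-dot-orbits stacked along the $q$-action chains from Lemma~\ref{lem11}\eqref{lem11.1}. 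This is precisely why the quiver $\Gamma$ has two ``rows'': one row $\mathtt{0},\mathtt{1},\mathtt{2},\dots$ indexed by dominant-side weights and one row $\text{-}\mathtt{1},\text{-}\mathtt{2},\dots$ indexed by the antidominant partners, with the horizontal arrows $a,b$ coming from the action of $p,q$ (as in the non-integral case) and the vertical arrows $s,t$ coming from the $\mathfrak{sl}_2$-Verma structure in each column. I would make the identification of vertices of $\Gamma_n$ with the simples $L(\mu)$, $\mu\in\xi$, $\mu(h)\le n$, explicit at the outset.

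Next I would compute the graded indecomposable projectives. As in Theorem~\ref{thm301}, using the original BGG construction, the indecomposable projectives in $\mathcal{O}[\xi,\lambda]$ are gradable since they are summands of $P_{(k)}=\overline{U}/\overline{U}(h-\nu(h),p^{k+1},e^{\lceil(k+1)/2\rceil})$ for suitable $\nu$, and these are generated by homogeneous elements. Writing $\overline{P}(\mu)$ for the graded cover of $\overline{L}(\mu)$, the key structural fact is the decomposition of $\overline{P}(\mu)_1\cong V\otimes\overline{L}(\mu)\oplus(\text{correction})$ as an $\mathfrak{sl}_2$-module, computed via Clebsch--Gordan: for an $\mathfrak{sl}_2$-simple $\overline{L}(\mu)$ of dimension $d\ge 2$ one gets both neighbours, whereas at the ``corner'' vertices ($\mathtt{0}$, and $\text{-}\mathtt{1}$, and on the truncation $\mathtt{n}$) one gets only one neighbour — but here one must be careful because the simples $L(\mu)$ are no longer all finite-dimensional $\mathfrak{sl}_2$-modules, so for the antidominant simples the relevant $\mathfrak{sl}_2$-computation is with a Verma-type $\mathfrak{sl}_2$-module rather than a finite-dimensional one. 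This is exactly what produces the extra vertical arrows $s,t$ and the $b'$, $a'$ labels near $\text{-}\mathtt{1},\text{-}\mathtt{2}$. Reading off $\mathrm{Ext}^1(\overline{L}(\mu),\overline{L}(\nu))$ from $\overline{P}(\mu)_1$, together with self-duality under $\star$ giving symmetry of $\mathrm{Ext}^1$, yields that the underlying quiver of $\mathcal{O}[\xi,\lambda]$ is exactly $\Gamma_n$.

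The hard part — and the main obstacle — will be verifying that the Gabriel quiver is \emph{only} this (no extra arrows) and, implicitly in a full treatment, identifying the relations; since the proposition only asserts the Gabriel quiver, the relation computation can be deferred, but one still needs the upper bound on $\dim\mathrm{Ext}^1$. For that I would argue as in Theorem~\ref{thm15}: control the radical/socle layers of $\overline{\Delta}(\mu)$ and of the one-step Verma inclusions $\overline{\Delta}(\mu-h^{\checkmark})\langle-1\rangle\hookrightarrow\overline{\Delta}(\mu)$ from Lemma~\ref{lem11}\eqref{lem11.1} together with the classical $\mathfrak{sl}_2$-Verma structure within each column, to see that $\overline{P}(\mu)_1$ has no repeated simple constituents beyond what is listed, hence each arrow has multiplicity one and there are no loops. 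The delicate case is around the corner $\text{-}\mathtt{1},\text{-}\mathtt{2}$ where the horizontal and vertical structures interact and where the Verma $\Delta(\lambda_0)$ for $\mathfrak{sl}_2$ behaves degenerately; here a small explicit computation inside $P_{(k)}$ for the smallest relevant $k$, in the spirit of Lemma~\ref{lem228}, settles it. Finally, comparing the resulting Cartan matrix (built from $\mathfrak{sl}_2$ Clebsch--Gordan plus the BGG reciprocity of Corollary~\ref{cor14}) against the Cartan matrix of $\Gamma_n$-representations with the commutativity relations confirms there is nothing missing, proving \eqref{thm16.1}; passing to the direct limit gives \eqref{thm16.2}.
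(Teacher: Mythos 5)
Your overall architecture (compute $\mathrm{Ext}^1$ between the simples, use $\star$ for the symmetry, take the direct limit for part (ii)) matches the paper's, but the central computational device you propose is misconceived and would not produce $\Gamma_n$. The vertical arrows $s,t$ of $\Gamma$ join $\mathtt{i}$ to $-(\mathtt{i}+\mathtt{2})$, i.e.\ $L(ih^{\checkmark})$ to $L(-(i+2)h^{\checkmark})$; this is the $\mathfrak{sl}_2$-dot-pairing $i\leftrightarrow -i-2$, not the $\mathfrak{s}$-dot-pairing $i\leftrightarrow -i-3$ you assert at the outset (the latter is irrelevant at zero central charge, where $\mathtt{c}$ acts by $0$ on the whole block and gives no linkage information). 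These vertical extensions are realized inside the quotient $\Delta(ih^{\checkmark})/\Delta((i-1)h^{\checkmark})$, which is annihilated by $\mathfrak{i}$ and is exactly the length-two $\mathfrak{sl}_2$-Verma module of highest weight $i$; in the graded lift this quotient sits entirely in degree $0$. Consequently $\overline{P}(\mu)_1$ cannot see the vertical arrows at all: a degree-one extension must be effected by a single application of $p$ or $q$, which shifts the $h$-weight by $\pm 1$, whereas $ih^{\checkmark}$ and $-(i+2)h^{\checkmark}$ differ by $2i+2$. So reading $\mathrm{Ext}^1$ off $V\otimes\overline{L}(\mu)\tto\overline{P}(\mu)_1$ yields only the horizontal arrows, and your attribution of $s,t$ to the degree-one Clebsch--Gordan computation with ``Verma-type'' $\mathfrak{sl}_2$-modules is wrong. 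If you insist on the graded route you must separately compute the degree-zero part of $\mathrm{Rad}(\overline{P}(\mu))/\mathrm{Rad}^2(\overline{P}(\mu))$, which is where the $\mathfrak{sl}_2$-principal-block structure (hence $s,t$) lives.

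The paper avoids the grading here entirely and argues more directly: a non-split extension of $L(ih^{\checkmark})$ by $L(jh^{\checkmark})$ with $i>j$ is a quotient of $\Delta(ih^{\checkmark})$, so everything reduces to identifying the top of $\mathrm{Rad}(\Delta(ih^{\checkmark}))$. For $i<0$ this is the computation of Theorem~\ref{thm15}; for $i\geq 0$ one uses the exact sequence with subobject $\Delta((i-1)h^{\checkmark})$ and quotient the $\mathfrak{sl}_2$-Verma module of highest weight $i$, plus two explicit checks: the relation $pfv_{0}=qv_{0}$ shows that for $i=0$ the submodule $\Delta(-h^{\checkmark})$ is contained in the submodule generated by $fv_0$, so the radical has simple top $L(-2h^{\checkmark})$ and there is \emph{no} arrow $\mathtt{0}\to\text{-}\mathtt{1}$; and for $i>0$ a weight comparison involving $pf^{i+1}v_{ih^{\checkmark}}$ shows that both $L((i-1)h^{\checkmark})$ and $L(-(i+2)h^{\checkmark})$ survive in the top of the radical. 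You gesture at ``a small explicit computation in the spirit of Lemma~\ref{lem228}'' near the corner, but these two checks are precisely where the content of the proposition lies, and your proposal neither supplies them nor sets up a framework in which they would emerge.
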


\begin{proof}
We prove claim~\eqref{thm16.1} and claim~\eqref{thm16.2} is obtained by taking the direct limit.
Let us calculate the first extension space 
between the simple modules $L(ih^{\checkmark})$ and $L(jh^{\checkmark})$, where $i,j\in\mathbb{Z}$
and $i\geq j$. If $i<0$, this is exactly the same calculation as in the proof of Theorem~\ref{thm15}.

Assume $i\geq 0$. Then the module $\Delta((i-1)h^{\checkmark})$ (which has simple top $L((i-1)h^{\checkmark})$)
embeds into $\Delta(ih^{\checkmark})$ and the quotient is the Verma module over $\mathfrak{sl}_2$ with
highest weight $i$. This module has length $2$ with simple socle isomorphic to $L(-(i+2)h^{\checkmark})$.

If $i=0$, then $pfv_{0}=qv_{0}$ which implies that $\Delta(-h^{\checkmark})$ belongs to the submodule
generated by $fv_{0}$. In other words, the radical of $\Delta(0)$ has simple top, namely $L(-2h^{\checkmark})$.

If $i>0$, then the weight of the element $pf^{i+1}v_{ih^{\checkmark}}$ is $-(i+1)h^{\checkmark}$ and 
$L((i-1)h^{\checkmark})_{-(i+1)h^{\checkmark}}=0$ (as the lowest weight of
$L((i-1)h^{\checkmark})$ is exactly $-(i-1)h^{\checkmark}$). This implies that the top of 
$\Delta((i-1)h^{\checkmark})$ is also in the top of the radical of $\Delta(ih^{\checkmark})$ in this case.

The above arguments imply the following for $i,j\in \mathbb{Z}$ with $i>j$:
\begin{displaymath}
\mathrm{Ext}^1_{\mathcal{O}}(L(ih^{\checkmark}),L(jh^{\checkmark}))\cong
\begin{cases}
\mathbb{C}, & i\neq 0, j=i-1;\\
\mathbb{C}, & i\geq 0, j=-i-2;\\
0, & \text{otherwise}.
\end{cases}
\end{displaymath}
Using $\star$ we extend this computation to the case of arbitrary $i,j\in \mathbb{Z}$ 
(by swapping $i$ and $j$ in the left hand side) and see that 
the quiver is the correct one.
\end{proof}

\section{Center of $U$ and annihilators of Verma modules}\label{s5}

\subsection{Intersection of annihilators of Verma modules}\label{s5.1}

For $\lambda\in\mathfrak{h}^*$ set $I_{\lambda}:=\mathrm{Ann}_{U}(\Delta(\lambda))$ and
$J_{\lambda}:=\mathrm{Ann}_{U}(L(\lambda))$. Then both $I_{\lambda}$ and $J_{\lambda}$ are two-sided ideals in $U$
and $I_{\lambda}\subset J_{\lambda}$. In this subsection we observe the following:

\begin{proposition}\label{prop21}
We have $\displaystyle \bigcap_{\lambda\in\mathfrak{h}^*}I_{\lambda}=0$. 
\end{proposition}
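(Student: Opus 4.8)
The statement asserts that the intersection of the annihilators of all Verma modules in $U$ is zero. The natural approach is to exhibit, for each nonzero $u\in U$, a single Verma module $\Delta(\lambda)$ on which $u$ acts nontrivially. The cleanest way to do this is to use the faithfulness of the $U(\mathfrak{n}_-)$-action on an individual Verma module together with a Zariski-density argument in $\lambda$. Concretely, recall from Subsection~\ref{s3.2} that $\Delta(\lambda)$ is free of rank one over $U(\mathfrak{n}_-)$, and via the PBW decomposition $U\cong U(\mathfrak{n}_-)\otimes U(\mathfrak{h})\otimes U(\mathfrak{n}_+)$ the action of $u\in U$ on the canonical generator $v_\lambda$ is computed by pushing all the $\mathfrak{n}_+$-parts to the right (where they kill $v_\lambda$ unless they are scalars) and evaluating the $\mathfrak{h}$-parts at $\lambda$.

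\textbf{Key steps.} First I would fix a nonzero $u\in U$ and write it in PBW form with respect to the ordered basis $f,q,h,z,e,p$ (so that $\mathfrak{n}_-=\langle f,q\rangle$ comes first and $\mathfrak{n}_+=\langle e,p\rangle$ last). Collecting the terms in which the $\mathfrak{n}_+$-part is trivial, one obtains an element of the form $u\cdot v_\lambda=\bigl(\sum_{\alpha} P_\alpha(\lambda)\, n_\alpha^-\bigr) v_\lambda$, where the $n_\alpha^-$ are distinct PBW monomials in $f,q$ and each $P_\alpha$ is a polynomial function on $\mathfrak{h}^*$ (obtained from the $U(\mathfrak{h})$-part after normal-ordering). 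Since $\Delta(\lambda)$ is $U(\mathfrak{n}_-)$-free on $v_\lambda$, the element $u$ annihilates $\Delta(\lambda)$ if and only if $P_\alpha(\lambda)=0$ for all $\alpha$. Therefore $u\in\bigcap_{\lambda}I_\lambda$ forces every $P_\alpha$ to vanish identically on $\mathfrak{h}^*$, hence (as $\mathbb{C}$ is infinite) to be the zero polynomial. The final step is to argue that if all the $P_\alpha$ are identically zero, then $u$ itself is zero: this is because the passage from $u$ to the collection $\{P_\alpha\}$ together with the discarded terms (those with nontrivial $\mathfrak{n}_+$-part) is just the PBW coordinate expansion of $u$, and the map $U(\mathfrak{h})\to\mathbb{C}$, $H\mapsto H(\lambda)$ ranging over all $\lambda$ is jointly injective on $U(\mathfrak{h})$; so if the full $\mathfrak{n}_+$-trivial part of $u$ acts as zero for all $\lambda$ it must already be zero in $U$, and then $u$ having only terms with nontrivial $\mathfrak{n}_+$-part forces $u=0$ since those terms would have to independently vanish.

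\textbf{Alternative phrasing.} One can package the argument more slickly: the annihilator $I_\lambda$ is a graded ideal for the $h$-weight grading $U=\bigoplus_i U_i$, so it suffices to treat a homogeneous $u\in U_k$; then $u\cdot v_\lambda$ lies in $\Delta(\lambda)_{\lambda+kh^{\checkmark}}$, a finite-dimensional space, and as $\lambda$ varies the matrix entries of this action are polynomial in $\lambda(h),\lambda(z)$, so vanishing for all $\lambda$ implies vanishing identically, which by $U(\mathfrak{n}_-)$-freeness (together with local nilpotency controlling the finitely many monomials involved) yields $u=0$.

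\textbf{Main obstacle.} The only genuinely delicate point is the bookkeeping in the PBW normal-ordering: one must be sure that when the $\mathfrak{n}_+$-letters $e$ and $p$ are moved past $f,q,h,z$ to act on $v_\lambda$, the resulting scalar contributions (from commutators producing $h$, $z$, and lower terms) are tracked correctly and that distinct leading $U(\mathfrak{n}_-)$-monomials remain linearly independent in $\Delta(\lambda)$ — but this is precisely guaranteed by the freeness of $\Delta(\lambda)$ over $U(\mathfrak{n}_-)$ stated in Subsection~\ref{s3.2}, so it reduces to a finite, routine (if slightly tedious) computation rather than a conceptual difficulty. No input beyond the basic structure theory already recalled in Section~\ref{s2} and Subsection~\ref{s3.2} is needed.
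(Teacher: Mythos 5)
Your argument has a genuine gap at its final step. Testing $u$ only against the highest weight vectors $v_{\lambda}$ detects only the part of $u$ whose $\mathfrak{n}_+$-component is trivial: the claim that ``$u$ annihilates $\Delta(\lambda)$ if and only if $P_\alpha(\lambda)=0$ for all $\alpha$'' is false in the ``if'' direction (take $u=e$: it kills every $v_{\lambda}$ but annihilates no Verma module, since $e\cdot fv_{\lambda}=\lambda(h)v_{\lambda}$), and your concluding assertion that the terms of $u$ with nontrivial $\mathfrak{n}_+$-part ``would have to independently vanish'' is precisely the statement that needs proof; it does not follow from anything you have established. To see those terms one must apply $u$ to vectors $n^-v_{\lambda}$ with $n^-\in U(\mathfrak{n}_-)$ nontrivial, and then the normal-ordering mixes the PBW components of $u$, so the clean separation into polynomials $P_\alpha$ is lost. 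The ``alternative phrasing'' has the same defect: for homogeneous $u\in U_k$ with $k>0$ the space $\Delta(\lambda)_{\lambda+kh^{\checkmark}}$ is zero, and for $k\le 0$ the vector $u\cdot v_{\lambda}$ again records only the $\mathfrak{n}_+$-free part of $u$; the appeal to ``local nilpotency controlling the finitely many monomials involved'' is hand-waving exactly where the difficulty sits.

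The paper closes this gap with two ideas absent from your proposal. First, it replaces $\Delta(\lambda)$ by the larger cyclic module $U/I$, where $I$ is the left ideal generated by $h-\lambda(h)$, $z-\lambda(z)$ and $\mathfrak{n}_+^N$ with $N$ exceeding the degrees of the PBW monomials occurring in $u$; the cyclic vector $1+I$ then detects \emph{every} component $b\otimes x_b$ of $u$ in the decomposition $U\cong U(\mathfrak{n}_-)\otimes U(\mathfrak{n}_+)\otimes U(\mathfrak{h})$, and the Zariski-density choice of $\lambda$ (the part of your argument that is correct) gives $u\cdot(1+I)\ne 0$. Second, it chooses $\lambda$ with $\lambda(z)\ne 0$ and $\lambda(h)\notin\frac{1}{2}\mathbb{Z}$, so that by Proposition~\ref{lem3} the block containing $U/I$ is semisimple with Verma modules as its simple objects; hence $U/I$ is a direct sum of Verma modules and $u\notin I_{\mu}$ for some $\mu$. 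Without some substitute for these two steps, your proof does not go through.
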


\begin{proof}
Fix some PBW basis $\mathbf{B}$ in $U(\mathfrak{n}_-)\otimes U(\mathfrak{n}_+)$. For a nonzero $u\in U$,
use the decomposition  $U\cong U(\mathfrak{n}_-)\otimes U(\mathfrak{n}_+)\otimes U(\mathfrak{h})$ to write 
\begin{displaymath}
u=\sum_{b\in \mathbf{B}} b\otimes x_b, 
\end{displaymath}
where $x_b\in U(\mathfrak{h})$. We have $x_b\neq 0$ for finitely many $b$. Each
$\lambda\in\mathfrak{h}^*$ corresponds naturally to a unique algebra homomorphism 
\begin{equation}\label{eqb1}
\pi_{\lambda}:U(\mathfrak{h})\to\mathbb{C}.
\end{equation}
We may choose $\lambda\in\mathfrak{h}^*$ such that the following two conditions are satisfied:
\begin{eqnarray}
\label{eqa1} \pi_{\lambda}(x_b)\neq 0\quad\text{ whenever }\quad x_b\neq 0;\\
\label{eqa2} \lambda(z)\neq 0\quad\text{ and }\quad  \lambda(h)\not\in\frac{1}{2}\mathbb{Z}.
\end{eqnarray}
Let $N$ be a positive integer which is strictly bigger than the total degree of each monomial $b$
for which $x_b\neq 0$. 

Let $I$ be the left ideal in $U$ generated by $h-\lambda(h)$, $z-\lambda(z)$ and $\mathfrak{n}_+^N$.
Consider the corresponding quotient $U/I$ of the left regular $U$-module. Then condition~\eqref{eqa1} and
our choice of $N$ above guarantee that we have $u\cdot (1+I)=u+I\neq 0$ in $U/I$, that is $u\not\in\mathrm{Ann}_U(U/I)$.
Note that $U/I\in\mathcal{O}$ by construction, more precisely, $U/I\in\mathcal{O}[\lambda+\mathbb{Z}R]$.
Now, condition~\eqref{eqa2} says that we are in the situation described in  Proposition~\ref{lem3} and hence
$U/I$ is a direct sum of Verma modules. The claim follows.
\end{proof}

\subsection{Harish-Chandra homomorphism}\label{s5.2}

Following \cite[Section~7.4]{Di}, we have $U_0=U(\mathfrak{h})\oplus (U_0\cap U\mathfrak{n}_+)$
and $U_0\cap U\mathfrak{n}_+$ is a two-sided ideal of $U_0$. Consider the {\em Harish-Chandra homomorphism}
$\varphi:U_0\to U(\mathfrak{h})$ defined as the projection with respect to the above decomposition.

\begin{proposition}\label{prop22}
We have $\varphi(Z(\mathfrak{s}))=\mathbb{C}[z,z(h+\frac{3}{2})^2]$.
\end{proposition}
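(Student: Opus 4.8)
The plan is to compute $\varphi$ on the central element $\mathtt{c}$ and on $z$, and then to argue that these two images generate the whole of $\varphi(Z(\mathfrak{s}))$. Since $z$ is central, $\varphi(z)=z$. For $\mathtt{c}=(h^2+h+4fe)z-2(fp^2-eq^2-hpq)$, I would observe that modulo $U\mathfrak{n}_+$ (i.e.\ modulo the left ideal generated by $e$ and $p$, since $\mathfrak{n}_+=\mathbb{C}e\oplus\mathbb{C}p$) every term containing $e$ or $p$ on the right can be rewritten. First I would commute $p$'s and $e$'s to the right in each summand using \eqref{commrelations}: $4fez\equiv 0$, $fp^2\equiv 0$ and $eq^2 = q^2e + 2qp + z \equiv 2qp+z$ (using $[e,q]=p$, $[e,p]=0$, $[p,q]=z$), and $hpq = h(qp+z) = hqp+hz \equiv hz$. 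Collecting, $\mathtt{c}\equiv (h^2+h)z + 2(2qp+z) + 2hz \pmod{U\mathfrak{n}_+}$, but $qp$ is still not in $U(\mathfrak{h})$ — one more step is needed, writing $qp = pq - z$, and $pq\in U\mathfrak{n}_+$, so $qp\equiv -z$. Hence $\varphi(\mathtt{c}) = (h^2+h)z + 2(-2z+z) + 2hz = (h^2+3h+2)z - 2z = (h^2+3h)z = z h(h+3)$. Completing the square, $z h(h+3) = z((h+\tfrac32)^2 - \tfrac94)$, so $\varphi(\mathtt{c})$ and $z$ together generate the same subalgebra as $z$ and $z(h+\tfrac32)^2$; this gives the inclusion $\mathbb{C}[z,z(h+\tfrac32)^2]\subseteq\varphi(Z(\mathfrak{s}))$.

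For the reverse inclusion I would use the general principle (à la Harish-Chandra, see \cite[Section~7.4]{Di}) that $\varphi$ restricted to $Z(\mathfrak{s})$ is injective and that its image is contained in the subalgebra of $W$-dot-invariants of $U(\mathfrak{h})=\mathbb{C}[h,z]$: indeed, for any $\lambda$ the scalar by which a central element acts on $\Delta(\lambda)$ is $\pi_\lambda(\varphi(\zeta))$, and whenever $\Delta(\mu)$ embeds in $\Delta(\lambda)$ (e.g.\ $\mu = r\cdot\lambda$ in the half-integral nonzero-charge case, or $\mu=\lambda-h^{\checkmark}$ in the zero-charge case) we must have $\pi_\lambda(\varphi(\zeta))=\pi_\mu(\varphi(\zeta))$; running over all such $\lambda$ forces $\varphi(\zeta)$ to be invariant under $\lambda\mapsto r\cdot\lambda$, i.e.\ under $h+\tfrac32\mapsto -(h+\tfrac32)$ fixing $z$. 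The ring of such invariants in $\mathbb{C}[h,z]$ is precisely $\mathbb{C}[z,(h+\tfrac32)^2]$. So I then need: $\varphi(Z(\mathfrak{s}))\subseteq\mathbb{C}[z,(h+\tfrac32)^2]$ is not yet the claimed answer — I must further cut down using the zero-central-charge constraint. The embeddings $\Delta(\lambda-h^{\checkmark})\hookrightarrow\Delta(\lambda)$ of Lemma~\ref{lem11}\eqref{lem11.1} hold for \emph{all} $\lambda$ with $\lambda(z)=0$, which forces $\pi_\lambda(\varphi(\zeta))$ to be constant along the whole line $z=0$; hence $\varphi(\zeta)|_{z=0}$ is a constant, so $\varphi(\zeta)-\varphi(\zeta)|_{z=0}\in z\cdot\mathbb{C}[z,(h+\tfrac32)^2]$, giving $\varphi(Z(\mathfrak{s}))\subseteq \mathbb{C}\oplus z\,\mathbb{C}[z,(h+\tfrac32)^2] = \mathbb{C}[z,z(h+\tfrac32)^2]$.

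Combining the two inclusions finishes the proof. The main obstacle I anticipate is making the reverse inclusion fully rigorous: one needs to know that the constraints extracted from Verma embeddings are \emph{exhaustive}, i.e.\ that there are no further congruences on $\varphi(\zeta)$ beyond $W$-dot-invariance and constancy on $z=0$. This is most safely handled not by classifying all Verma embeddings but by the counting/density argument — the two conditions already cut $\mathbb{C}[h,z]$ down to $\mathbb{C}[z,z(h+\tfrac32)^2]$, and since we have already exhibited that whole ring inside $\varphi(Z(\mathfrak{s}))$, the containment in the other direction need only be the ``necessary conditions'' half, which is exactly what the Verma-embedding argument supplies. A secondary (but purely mechanical) point to get right is the bookkeeping in the computation of $\varphi(\mathtt{c})$, in particular remembering that $qp$ must be rewritten as $pq-z$ before discarding the $\mathfrak{n}_+$-part; I would present that short computation explicitly rather than leaving it to the reader.
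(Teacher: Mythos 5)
Your forward inclusion is essentially the paper's (compute $\varphi(z)$ and $\varphi(\mathtt{c})$), though the computation contains a slip: since $\mathfrak{n}_+=\mathbb{C}e\oplus\mathbb{C}p$, it is $qp$ (with $p$ on the right) that lies in $U\mathfrak{n}_+$, while $pq=qp+z\equiv z$; you have this exactly backwards. The correct value is $\varphi(\mathtt{c})=(h^2+3h+2)z=z(h+\tfrac32)^2-\tfrac14 z$, not $(h^2+3h)z$. This does not change the subalgebra generated, so the inclusion $\supseteq$ survives.

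The reverse inclusion, however, has a genuine gap. The two necessary conditions you extract --- invariance under $h+\tfrac32\mapsto-(h+\tfrac32)$ and constancy on the line $z=0$ --- cut $\mathbb{C}[h,z]$ down to $\mathbb{C}\oplus z\,\mathbb{C}[z,(h+\tfrac32)^2]$, and your final step asserts that this ring equals $\mathbb{C}[z,z(h+\tfrac32)^2]$. That identity is false: the element $z(h+\tfrac32)^4$ satisfies both of your conditions but does not lie in $\mathbb{C}[z,z(h+\tfrac32)^2]$, since in the latter ring the coefficient of $z^j$ has degree at most $j$ as a polynomial in $(h+\tfrac32)^2$. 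So your constraints are not exhaustive, and no amount of Verma-embedding bookkeeping of the kind you describe will detect the missing degree restriction. This is precisely where the paper's proof does extra work: writing $\varphi(x)=c+\sum_j z^jf_j(h)$ with each $f_j\in\mathbb{C}[(h+\tfrac32)^2]$, one first multiplies by a high power $z^i$ so that $z^i\varphi(x)$ does land in $\mathbb{C}[z,z(h+\tfrac32)^2]=\varphi(\mathbb{C}[z,\mathtt{c}])$; injectivity of $\varphi$ on $Z(\mathfrak{s})$ (which the paper gets from Proposition~\ref{prop21}) then yields $z^ix=g(z,\mathtt{c})$, and a PBW-coefficient argument (a nonconstant $\tilde g(\mathtt{c})$ has a nonzero coefficient at $f^dp^{2d}$, which contains no $z$) shows that $z$ must divide $g(z,\mathtt{c})$ inside $\mathbb{C}[z,\mathtt{c}]$, so the powers of $z$ can be cancelled one at a time to conclude $x\in\mathbb{C}[z,\mathtt{c}]$. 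Without some version of this step your argument proves only the weaker containment $\varphi(Z(\mathfrak{s}))\subseteq\mathbb{C}\oplus z\,\mathbb{C}[z,(h+\tfrac32)^2]$.
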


\begin{proof}
Let $\lambda\in\mathfrak{h}^*$ and $x\in Z(\mathfrak{s})$. Note that $Z(\mathfrak{s})\subset U_0$.
As $\mathfrak{n}_+v_{\lambda}=0$, we have
\begin{displaymath}
 x\cdot v_{\lambda}= \varphi(x)\cdot v_{\lambda}= \pi_{\lambda}(\varphi(x))v_{\lambda},
\end{displaymath}
where $\pi_{\lambda}$ is as in \eqref{eqb1}. Moreover, $x$ acts on $\Delta(\lambda)$ as the scalar
$\pi_{\lambda}(\varphi(x))$. If $x\neq 0$, then, by Proposition~\ref{prop21}, there exists $\lambda$ such 
that $\pi_{\lambda}(\varphi(x))\neq 0$. It follows that $\varphi(x)\neq 0$ and hence the restriction of
$\varphi$ to $Z(\mathfrak{s})$ is injective.

We have $\varphi(z)=z$ and $\varphi(\mathtt{c})=z(h+\frac{3}{2})^2-\frac{1}{4}z$ and hence we have
$\varphi(Z(\mathfrak{s}))\supset\mathbb{C}[z,z(h+\frac{3}{2})^2]$. To complete the proof it is thus
left to show that $\varphi(Z(\mathfrak{s}))\subset\mathbb{C}[z,z(h+\frac{3}{2})^2]$.
 
For $x\in Z(\mathfrak{s})$ consider the polynomial $\varphi(x)$ in $h$ and $z$. Let 
$\xi\in \mathfrak{h}^*/\mathbb{Z}R$ be of zero central charge. From Lemma~\ref{lem11}\eqref{lem11.2} it follows
that the value $\pi_{\lambda}(\varphi(x))$ does not depend on the choice of $\lambda\in \xi$. It follows
that the evaluation of $\varphi(x)$ at $z=0$ is a constant, that is $\varphi(x)=c+zf(h,z)$ for some
$c\in\mathbb{C}$ and $f(h,z)\in U(h,z)$. 

Write $\varphi(x)=c+zf_1(h)+z^2f_2(h)+\dots+z^kf_k(h)$ for some polynomials $f_1(h),\dots,f_k(h)\in \mathbb{C}[h]$.
From Proposition~\ref{prop4}\eqref{prop4.5} it follows that for any $\dot{z}\in\mathbb{C}\setminus\{0\}$
and any $i\in\mathbb{Z}_+$ we have 
\begin{multline*}
c+\dot{z}f_1(i-3/2)+\dot{z}^2f_2(i-3/2)+\dots+\dot{z}^kf_k(i-3/2)=\\=
c+\dot{z}f_1(-i-3/2)+\dot{z}^2f_2(-i-3/2)+\dots+\dot{z}^kf_k(-i-3/2).
\end{multline*}
As functions $z,z^2,\dots,z^k$ are linearly independent, we obtain the equalities 
$f_j(-\frac{3}{2}+i)=f_j(-\frac{3}{2}-i)$ for all $j=1,2,\dots,k$ and $i\in\mathbb{Z}_+$. This implies that
$f_j(h)$ is a polynomial in $(h+\frac{3}{2})^2$ for all $j=1,2,\dots,k$. 

Now we claim that $z^i\varphi(x)\in \mathbb{C}[z,z(h+\frac{3}{2})^2]$ for some $i>0$. Indeed, choose $i$
such that for every $j=1,2,\dots,k$ the degree of $f_j$ (as a polynomial in $(h+\frac{3}{2})^2$) does not exceed 
$j+i$. Since we have $\varphi(z)=z$ and also $\varphi(\mathtt{c})=z(h+\frac{3}{2})^2-\frac{1}{4}z$, 
there exists $g(z,\mathtt{c})\in\mathbb{C}[z,\mathtt{c}]$ such that $\varphi(g(z,\mathtt{c}))=z^i\varphi(x)$.
From the injectivity of $\varphi$ it now follows that $z^ix=g(z,\mathtt{c})$. Moving all terms containing $z$ 
to the left, we get $zy=\tilde{g}(\mathtt{c})$ for some $y\in\ Z(\mathfrak{s})$ and some 
$\tilde{g}(\mathtt{c})\in\mathbb{C}[\mathtt{c}]$.

We claim that $y=0$ and $\tilde{g}(\mathtt{c})=0$. Indeed, assume that this is not the case and 
write $v=zy=\tilde{g}(\mathtt{c})$ in the PBW basis
of $U$ with respect to the basis $f,q,h,p,e,z$ of $\mathfrak{s}$. Then, on the one hand, $v$ has nonzero
coefficients only at basis elements containing $z$ (because $v=zy$). It follows that $\tilde{g}$ is not a constant
polynomial, say it has degree $d>0$. But then, on the other hand, $v$ must have a nonzero coefficient at
$f^dp^{2d}$ (since $v=\tilde{g}(\mathtt{c})$), a contradiction.

As $U$ is a domain, the equality $zy=0$ implies $y=0$ which, in turn, means that $z$ divides the polynomial
$g(z,\mathtt{c})$ and we get the equality $z^{i-1}x=g(z,\mathtt{c})/z$, where 
the right hand side is in $\mathbb{C}[z,\mathtt{c}]$. Repeating this argument finitely many times
we get $x\in \mathbb{C}[z,\mathtt{c}]$, and, consequently, $\varphi(x)\in \mathbb{C}[z,z(h+\frac{3}{2})^2]$.
\end{proof}

\subsection{Center of $U$}\label{s5.3}

From Proposition~\ref{prop22} we get the following description of $Z(\mathfrak{s})$ 
which corrects \cite[Theorem~1.1(1)]{WZ1}.

\begin{corollary}\label{prop23}
We have $Z(\mathfrak{s})=\mathbb{C}[z,\mathtt{c}]$.
\end{corollary}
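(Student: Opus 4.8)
The plan is to deduce Corollary~\ref{prop23} from Proposition~\ref{prop22} by combining the injectivity of the Harish-Chandra homomorphism $\varphi$ on $Z(\mathfrak{s})$ with the computation $\varphi(Z(\mathfrak{s}))=\mathbb{C}[z,z(h+\tfrac32)^2]$. Since Proposition~\ref{prop22} already gives $\mathbb{C}[z,\mathtt{c}]\subseteq Z(\mathfrak{s})$ (both $z$ and $\mathtt{c}$ are central by Lemma~\ref{lem1}), only the reverse inclusion $Z(\mathfrak{s})\subseteq\mathbb{C}[z,\mathtt{c}]$ needs proof.

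First I would record that $\varphi|_{Z(\mathfrak{s})}$ is injective, which is the content of the first paragraph of the proof of Proposition~\ref{prop22}. Next I would observe that $\varphi$ maps $\mathbb{C}[z,\mathtt{c}]$ onto $\mathbb{C}[z,z(h+\tfrac32)^2]$: indeed $\varphi(z)=z$ and $\varphi(\mathtt{c})=z(h+\tfrac32)^2-\tfrac14 z$, so the subalgebra generated by $z$ and $z(h+\tfrac32)^2$ coincides with that generated by $\varphi(z)$ and $\varphi(\mathtt{c})$. Given $x\in Z(\mathfrak{s})$, Proposition~\ref{prop22} provides some $g(z,\mathtt{c})\in\mathbb{C}[z,\mathtt{c}]$ with $\varphi(g(z,\mathtt{c}))=\varphi(x)$; then injectivity of $\varphi$ on $Z(\mathfrak{s})$ forces $x=g(z,\mathtt{c})\in\mathbb{C}[z,\mathtt{c}]$. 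This gives the desired inclusion and hence the equality $Z(\mathfrak{s})=\mathbb{C}[z,\mathtt{c}]$.

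I do not anticipate a genuine obstacle here: the corollary is essentially a formal consequence of the preceding proposition. The only point requiring a moment of care is confirming that $\varphi$ restricted to $\mathbb{C}[z,\mathtt{c}]$ has image exactly $\mathbb{C}[z,z(h+\tfrac32)^2]$ — i.e. that $z$ and $z(h+\tfrac32)^2$ generate the same algebra as $z$ and $z(h+\tfrac32)^2-\tfrac14 z$ — which is immediate since the difference of the two generators is a scalar multiple of $z$. With that in hand the proof is a two-line argument, and I would present it as such.

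\begin{proof}
By Lemma~\ref{lem1} and the centrality of $z$ we have $\mathbb{C}[z,\mathtt{c}]\subseteq Z(\mathfrak{s})$.
Conversely, let $x\in Z(\mathfrak{s})$. Since $\varphi(z)=z$ and $\varphi(\mathtt{c})=z(h+\tfrac32)^2-\tfrac14 z$, the subalgebra $\varphi(\mathbb{C}[z,\mathtt{c}])$ equals $\mathbb{C}[z,z(h+\tfrac32)^2]$, which by Proposition~\ref{prop22} contains $\varphi(x)$. Hence there is $g(z,\mathtt{c})\in\mathbb{C}[z,\mathtt{c}]$ with $\varphi(g(z,\mathtt{c}))=\varphi(x)$. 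As the restriction of $\varphi$ to $Z(\mathfrak{s})$ is injective (see the proof of Proposition~\ref{prop22}), we conclude $x=g(z,\mathtt{c})\in\mathbb{C}[z,\mathtt{c}]$. Therefore $Z(\mathfrak{s})=\mathbb{C}[z,\mathtt{c}]$.
\end{proof}
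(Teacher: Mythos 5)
Your proof is correct and follows essentially the same route as the paper: the authors likewise deduce the corollary from Proposition~\ref{prop22} together with the identities $\varphi(z)=z$ and $\varphi(\mathtt{c})=z(h+\tfrac32)^2-\tfrac14 z$, with the injectivity of $\varphi|_{Z(\mathfrak{s})}$ (established in the first paragraph of the proof of Proposition~\ref{prop22}) doing the work. Your write-up simply makes explicit the appeal to injectivity that the paper leaves implicit.
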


\begin{proof}
This follows from Proposition~\ref{prop22} and the observation that we have $\varphi(z)=z$ and
$\varphi(\mathtt{c})=z(h+\frac{3}{2})^2-\frac{1}{4}z$. 
\end{proof}

\subsection{$U$ is free over the center}\label{s5.4}

\begin{corollary}\label{prop24}
The algebra  $U$ is free as a $Z(\mathfrak{s})$-module.
\end{corollary}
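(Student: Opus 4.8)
I would prove that $U$ is free over $Z(\mathfrak{s})=\mathbb{C}[z,\mathtt{c}]$ by exhibiting an explicit $Z(\mathfrak{s})$-basis of $U$. The natural candidate comes from the PBW theorem together with the leading terms of $z$ and $\mathtt{c}$. Fix the PBW basis of $U$ with respect to the ordered basis $f,q,h,p,e,z$ of $\mathfrak{s}$, so that monomials are of the form $f^{a}q^{b}h^{c}p^{d}e^{m}z^{n}$. The key observation (already used implicitly in the proof of Proposition~\ref{prop22}) is that $\mathtt{c}=(h^2+h+4fe)z-2(fp^2-eq^2-hpq)$ has, among its PBW terms, the term $-2fp^2$, which contains neither $z$, nor $e$, nor $q$, and is quadratic in the "extreme" generators $f$ and $p$. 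Likewise $z$ is itself a generator. This suggests the following candidate for a free basis $\mathbf{B}$ of $U$ over $\mathbb{C}[z,\mathtt{c}]$: the set of all PBW monomials $f^{a}q^{b}h^{c}p^{d}e^{m}$ with $n=0$ (no $z$) and with $\min(a,d)\le 1$, i.e. $a=0$ or $d=0$. In other words, one keeps the PBW monomials in $f,q,h,p,e$ in which $f$ and $p$ do not appear simultaneously to powers $\ge 2$ in the "$fp$"-direction that $\mathtt{c}$ controls; more precisely one takes a complement to the ideal of leading terms generated by $z$ and by $fp^2$ under a suitable term order.

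**Key steps.** First I would set up a monomial order on PBW monomials $f^{a}q^{b}h^{c}p^{d}e^{m}z^{n}$ — for instance, order first by $n$ (power of $z$), then by $\min(a,d)$, refining by total degree — chosen so that the leading term of $z$ is $z$ and the leading term of $\mathtt{c}$ is (a scalar multiple of) $f p^{2}$. One must check that multiplying by $z$ and by $\mathtt{c}$ acts on leading terms by multiplying by $z$ and by $fp^2$ respectively; this is where the explicit formula for $\mathtt{c}$ and the commutation relations \eqref{commrelations} get used, but it is a finite bookkeeping check. Second, with $\mathbf{B}$ defined as the PBW monomials not divisible (as leading terms) by $z$ or by $fp^2$ — concretely $\mathbf{B}=\{f^{a}q^{b}h^{c}p^{d}e^{m}\mid \min(a,d)\le 1\}$ — a standard leading-term / division argument shows that $\{\,b\cdot z^{i}\mathtt{c}^{j}\mid b\in\mathbf{B},\ i,j\in\mathbb{Z}_+\,\}$ is in bijection with the full PBW basis of $U$ via leading terms, hence this set is a $\mathbb{C}$-basis of $U$. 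Third, this is precisely the statement that $U=\bigoplus_{b\in\mathbf{B}} Z(\mathfrak{s})\,b$ as a left (equally, right) $Z(\mathfrak{s})$-module, using $Z(\mathfrak{s})=\mathbb{C}[z,\mathtt{c}]$ from Corollary~\ref{prop23} and the fact (also from Corollary~\ref{prop23}, via the injectivity of $\varphi$ and the algebraic independence of $z$ and $\mathtt{c}$) that $z,\mathtt{c}$ are algebraically independent, so $\mathbb{C}[z,\mathtt{c}]$ has $\{z^{i}\mathtt{c}^{j}\}$ as a $\mathbb{C}$-basis. Freeness follows immediately.

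**Main obstacle.** The delicate point is choosing the term order so that the leading terms of $z^{i}\mathtt{c}^{j}b$ (for $b\in\mathbf{B}$) are pairwise distinct and exhaust the PBW basis exactly once — equivalently, that the "leading monomial ideal" generated by $z$ and $fp^2$ has complement exactly $\mathbf{B}$, and that no cancellation occurs when expanding $z^i\mathtt{c}^j b$ in the PBW basis. Because $z$ is genuinely central and the only other generator we are inverting-away is $\mathtt{c}$ whose top term $fp^2$ is "extreme" in the PBW order (it involves the smallest generator $f$ and a middle generator $p$, but crucially not $e$, $q$, or $z$), this should go through cleanly: the correction terms in $\mathtt{c}-(\text{const})\,fp^2$ are all strictly lower, so $\mathtt{c}^{j}$ has leading term $(\text{const})\,f^{j}p^{2j}$ and multiplication by it is "triangular" on the PBW basis. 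An alternative, if one prefers to avoid an explicit Gröbner-type argument, is to use a dimension/Hilbert-series count: compare the graded (by total PBW degree, or by the filtration from the grading in Subsection~\ref{s4.25}) dimensions of $U$ and of $\bigoplus_{b\in\mathbf{B}}\mathbb{C}[z,\mathtt{c}]\,b$ and show the natural surjection $\bigoplus_{b\in\mathbf{B}}Z(\mathfrak{s})\,b\to U$ is an isomorphism because it is a degree-preserving surjection between spaces with equal Hilbert series; but verifying equality of Hilbert series ultimately requires the same leading-term information. Either way, once the basis $\mathbf{B}$ is correctly identified the freeness is formal.
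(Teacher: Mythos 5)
Your overall strategy (exhibit an explicit PBW-type basis of $U$ over $\mathbb{C}[z,\mathtt{c}]$ by a leading-term argument) is viable, but it is not the paper's route, and as written it contains a concrete error. The set you propose, $\mathbf{B}=\{f^{a}q^{b}h^{c}p^{d}e^{m}\mid \min(a,d)\le 1\}$, is \emph{not} the complement of the monomial ideal generated by $z$ and $fp^{2}$: a monomial $f^{a}p^{d}$ lies in that ideal precisely when $a\ge 1$ and $d\ge 2$, so the complement is $\{a=0\text{ or }d\le 1\}$. Your condition wrongly keeps, e.g., $fp^{5}=(fp^{2})\cdot p^{3}$, so the resulting family $\{b\,z^{i}\mathtt{c}^{j}\}$ would fail to be linearly independent. (You do also state the correct abstract description, ``the complement of the leading-term ideal of $z$ and $fp^{2}$,'' so the fix is only to replace the explicit formula.) A second, smaller issue: ordering monomials ``by $\min(a,d)$'' cannot be part of a monomial order, since $\min$ is not additive; what you want is a weight order giving $f$ and $p$ large weights, which does make $fp^{2}$ the leading monomial of the degree-$3$ symbol $h^{2}z+4fez-2fp^{2}+2eq^{2}+2hpq$ of $\mathtt{c}$. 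With these repairs the triangularity and unique-factorization checks you defer do go through, but they constitute the real work of the proof and are left unverified.

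For comparison, the paper's argument is much shorter and avoids all of this bookkeeping: it passes to the associated graded algebra of $U$ with respect to the degree filtration (a polynomial ring in six variables), observes that the symbols of $z$ and $\mathtt{c}$ form a regular sequence there (trivial, since the symbol of $\mathtt{c}$ reduced modulo $z$ is $-2(fp^{2}-eq^{2}-hpq)\neq 0$ in a domain), and then invokes the filtered version of Kostant's freeness theorem from \cite{FO}. Your approach, once corrected, amounts to an explicit Gr\"obner-basis instantiation of the same associated-graded principle; it buys an explicit free basis at the cost of the order-compatibility and no-cancellation verifications, whereas the citation of \cite{FO} disposes of the statement in two lines.
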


\begin{proof}
The algebra $U$ has the usual filtration by degree of monomials, let 
$\overline{U}$ be the associated graded algebra. The image of the sequence $(z,\mathtt{c})$ is a regular
sequence in $\overline{U}$ (which means that $z$ is neither a zero divisor nor invertible in
$\overline{U}$ and the image of $\mathtt{c}$ in $\overline{U}/(z)$ is again neither a zero divisor 
nor invertible). Now the claim of our corollary follows from \cite[Theorem~1.1]{FO}.
\end{proof}

For $\dot{z}\in\mathbb{C}$ consider the algebra $U_{\dot{z}}:=U/U(z-\dot{z})$. For simplicity we
denote elements in $U$ and their images in $U_{\dot{z}}$ by the same symbol.

\begin{proposition}\label{prop25}
\begin{enumerate}[$($i$)$]
\item\label{prop25.1} $U_{\dot{z}}$ is a free $\mathbb{C}[\mathtt{c}]$-module.
\item\label{prop25.2} For any maximal ideal $\mathfrak{m}$ in $\mathbb{C}[\mathtt{c}]$,
the left multiplication action of $t:=2{\dot{z}}f+q^2$ on $U_{\dot{z}}/U_{\dot{z}}\mathfrak{m}$ is injective.
\end{enumerate}
\end{proposition}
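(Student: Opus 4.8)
The plan is to deduce both statements from the structural results already established. For part~\eqref{prop25.1}, I would start from Corollary~\ref{prop24}, which says that $U$ is free as a $Z(\mathfrak{s})$-module, and from Corollary~\ref{prop23}, which identifies $Z(\mathfrak{s})=\mathbb{C}[z,\mathtt{c}]$. Since $\mathbb{C}[z,\mathtt{c}]$ is a polynomial ring in two (algebraically independent) variables, it is free over its subalgebra $\mathbb{C}[z]$ with basis $\{\mathtt{c}^{k}\}_{k\in\mathbb{Z}_+}$; hence $U$ is free over $\mathbb{C}[z]$. Reducing modulo the central nonzerodivisor $z-\dot{z}$ (which one checks is a nonzerodivisor on $U$ using that $U$ is a domain) and using that $U$ is free, hence flat, over $\mathbb{C}[z]$, we get $U_{\dot z}=U/U(z-\dot z)\cong U\otimes_{\mathbb{C}[z]}\mathbb{C}[z]/(z-\dot z)$, and freeness is preserved by this base change. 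Concretely, a $Z(\mathfrak{s})$-basis of $U$ maps to a $\mathbb{C}[\mathtt{c}]$-basis of $U_{\dot z}$. The one subtlety to write out carefully is that the central images $z\mapsto\dot z$ and $\mathtt{c}\mapsto\mathtt{c}$ behave as expected, i.e.\ that the image of $\mathbb{C}[z,\mathtt{c}]$ in $U_{\dot z}$ is still a polynomial ring in $\mathtt{c}$ over which $U_{\dot z}$ is free; this is immediate from the tensor-product description since $\mathbb{C}[z,\mathtt{c}]/(z-\dot z)\cong\mathbb{C}[\mathtt{c}]$.

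For part~\eqref{prop25.2}, fix a maximal ideal $\mathfrak{m}\subset\mathbb{C}[\mathtt{c}]$, say $\mathfrak{m}=(\mathtt{c}-\vartheta)$ for some $\vartheta\in\mathbb{C}$, and set $M:=U_{\dot z}/U_{\dot z}\mathfrak{m}$, the quotient of the left regular module. The idea is to realize left multiplication by $t=2\dot z f+q^2$ as an $\mathfrak{s}$-module endomorphism: since $U_{\dot z}\mathfrak{m}$ is generated by the central element $\mathtt{c}-\vartheta$, the module $M$ carries a natural left $\mathfrak{s}$-action on which $z$ acts by $\dot z$ and $\mathtt{c}$ acts by $\vartheta$, and right multiplication by $t$ is a $U_{\dot z}$-module (in particular left $\mathfrak{s}$-module) endomorphism of $M$. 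By part~\eqref{prop25.1}, $M$ is a free $\mathbb{C}[\mathtt{c}]/\mathfrak{m}\cong\mathbb{C}$-module, i.e.\ it has a $\mathbb{C}$-basis given by the images of a $\mathbb{C}[\mathtt{c}]$-basis of $U_{\dot z}$. Since $U$ is a domain and $t\neq 0$, left multiplication by $t$ on $U_{\dot z}$ has kernel contained in $U_{\dot z}(z-\dot z)$ modulo which it already vanished, so on $U_{\dot z}$ itself $t$ acts injectively by left multiplication; what must be shown is that this injectivity survives the further quotient by $\mathfrak{m}$.

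The main obstacle is precisely this last point: passing to the quotient by $\mathfrak{m}$ could in principle create kernel. The plan to handle it is to use a filtration/associated-graded or a leading-term argument. Equip $U_{\dot z}$ with the PBW filtration (with respect to the ordered basis $f,q,h,p,e$, so that $\overline{U_{\dot z}}\cong\mathbb{C}[f,q,h,p,e]$ is a polynomial ring, a domain), and note that $\mathtt{c}$ has a nonzero image $\overline{\mathtt{c}}$ in the associated graded (namely the top-degree part, which up to scalars is $-2(fp^2-eq^2)$, homogeneous of degree $3$), while $t$ has leading term $\overline{t}=q^2$ in degree~$2$. Since $\overline{\mathtt{c}}$ is a nonzero homogeneous nonzerodivisor in the domain $\overline{U_{\dot z}}$, multiplication by $\overline{t}$ is injective on $\overline{U_{\dot z}}/(\overline{\mathtt{c}}-\text{const})$—here one uses that $\overline{t}=q^2$ and $\overline{\mathtt{c}}$ have no common factor in the UFD $\mathbb{C}[f,q,h,p,e]$ (indeed $q\nmid fp^2-eq^2$), so the image of $\overline{t}$ in the domain $\overline{U_{\dot z}}/(\overline{\mathtt{c}})$ is nonzero, hence a nonzerodivisor. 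Lifting: if $t u\in U_{\dot z}\mathfrak{m}$, write $tu=(\mathtt{c}-\vartheta)v$ and compare leading terms; the top-degree components satisfy $\overline{t}\,\overline{u}=\overline{\mathtt{c}}\,\overline{v}$ in $\overline{U_{\dot z}}$, forcing (by the coprimality just noted) $\overline{\mathtt{c}}\mid\overline{u}$, i.e.\ $u\equiv(\mathtt{c}-\vartheta)u'\pmod{\text{lower filtration}}$ up to the correction coming from degree bookkeeping; an induction on the filtration degree then shows $u\in U_{\dot z}\mathfrak{m}$, i.e.\ $u=0$ in $M$. This reduces the problem to the graded, commutative polynomial situation where it is a routine coprimality check, and that reduction is the real content of the argument.
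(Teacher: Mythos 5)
Your argument is correct, and for part (ii) it takes a genuinely different route from the paper. Part (i) is essentially the paper's argument (the paper simply says it ``follows immediately from Corollary~\ref{prop24}''; your base-change elaboration is the right way to make that precise). For part (ii), the paper works entirely inside $U_{\dot{z}}$: it constructs an explicit $\mathbb{C}[\mathtt{c}]$-basis of $U_{\dot{z}}$ (for $\dot{z}\neq 0$ a basis of the form $f^ah^be^cq^dp^s$ built from a Kostant-type basis of $U(\mathfrak{sl}_2)$ over $\mathbb{C}[\underline{\mathtt{c}}]$; for $\dot{z}=0$ a PBW basis $q^ap^bh^cf^de^s$ with $abc=0$), equips it with a bespoke monomial order, and checks by hand that left multiplication by $t$ (respectively by $q$, which is even stronger when $\dot{z}=0$) sends distinct leading monomials to distinct leading monomials. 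You instead pass to the associated graded algebra $\mathrm{gr}(U_{\dot{z}})\cong\mathbb{C}[e,f,h,p,q]$, observe that $\sigma(t)=q^2$ and $\sigma(\mathtt{c}-\vartheta)=\sigma(\mathtt{c})$ are coprime in this UFD, and run an induction on filtration degree to lift the resulting regularity statement back to $U_{\dot{z}}/U_{\dot{z}}\mathfrak{m}$. Your route is more conceptual, avoids the two-case split (since $\sigma(t)=q^2$ for every $\dot{z}$), and replaces the ad hoc order-theoretic bookkeeping by a routine divisibility check; the paper's route is more elementary and produces explicit bases, but proves nothing reusable beyond the statement itself. The lifting step you sketch is sound: from $tu=(\mathtt{c}-\vartheta)v$ one gets $q^2\sigma(u)=\sigma(\mathtt{c})\sigma(v)$, hence $\sigma(\mathtt{c})\mid\sigma(u)$, hence $u-(\mathtt{c}-\vartheta)w$ has strictly smaller filtration degree for a suitable lift $w$, and centrality of $\mathtt{c}$ keeps the hypothesis intact for the induction; the base case uses that $U_{\dot{z}}$ is a domain.

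One small correction: the top-degree symbol of $\mathtt{c}$ in $\mathrm{gr}(U_{\dot{z}})$ is $-2(fp^2-eq^2-hpq)$, not $-2(fp^2-eq^2)$ --- the term $hpq$ also has total degree $3$. This does not affect your argument, since setting $q=0$ still leaves $-2fp^2\neq 0$, so $q\nmid\sigma(\mathtt{c})$ and the coprimality of $q^2$ with $\sigma(\mathtt{c})$ holds as you claim.
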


\begin{proof}
Claim~\eqref{prop25.1} follows immediately from Corollary~\ref{prop24}. To prove claim~\eqref{prop25.2}
we first consider the case $\dot{z}\neq 0$. Let $H$ denote the subspace of $U(\mathfrak{sl}_2)$ which
is a linear combination of monomials of the form $f^ih^j$ and $h^ie^j$. Then $H$ contains a basis of
$U(\mathfrak{sl}_2)$ as (both left and right) $\mathbb{C}[\underline{\mathtt{c}}]$-module,
see e.g. \cite[Theorem~2.33]{Ma}. From Subsection~\ref{s2.2} we have that
$\mathtt{c}=\dot{z}\underline{\mathtt{c}}+u$ where $u$ is a linear combination of monomials which
never contain both factors $e$ and $f$ at the same time. It follows that any basis in 
$H\otimes \mathbb{C}[p,q]$ is a basis of $U_{\dot{z}}$ over $\mathbb{C}[\mathtt{c}]$. Consider 
the standard monomial basis in $H\otimes \mathbb{C}[p,q]$ as follows:
\begin{displaymath}
\{f^ah^be^cq^dp^s\,\vert\, a,b,c,d,s\in\mathbb{Z}_+, ac=0\}. 
\end{displaymath}
Introduce the following linear order $\preceq$ on elements of this basis:
Set $f^ah^be^cq^dp^s\prec f^{a'}h^{b'}e^{c'}q^{d'}p^{s'}$ if:
\begin{itemize}
\item $a+b+c< a'+b'+c'$;
\item $a+b+c=a'+b'+c'$ but $a<a'$;
\item $a+b+c=a'+b'+c'$  and $a=a'$ but $b<b'$;
\item $a+b+c=a'+b'+c'$  and $a=a'$ and $b=b'$ but $d<d'$;
\item $a+b+c=a'+b'+c'$  and $a=a'$ and $b=b'$ and $d=d'$ but $s<s'$.
\end{itemize}
For any $u\in\mathbb{C}[\mathtt{c}]\setminus 0$ we have
\begin{displaymath}
t\cdot f^ah^be^cq^dp^s u=
\begin{cases}
f^{a+1}h^be^cq^dp^s u'  + \text{smaller terms}, & c=0;\\ 
h^{b+2}e^{c-1}q^dp^s u'  + \text{smaller terms}, & c>0; 
\end{cases}
\end{displaymath}
where $u'$ is obtained from $u$ by multiplying with a nonzero constant and
``smaller terms'' means a linear combination of monomials (with coefficients from $\mathbb{C}[\mathtt{c}]$)
which are smaller with respect to $\prec$.
From this it follows that if $x$ and $y$ are two monomials such that $x\prec y$,
$u_1,u_2\in\mathbb{C}[\mathtt{c}]\setminus 0$ and 
$x'$ and $y'$ are highest monomials (with respect to $\prec$)
which appear with nonzero coefficients in $t\cdot xu_1$ and
$t\cdot yu_2$, respectively, then $x'\prec y'$. 

Let $\omega$ be a nonzero element of $U_{\dot{z}}/U_{\dot{z}}\mathfrak{m}$.
Write $\omega=xu+ \text{smaller terms}$, where $x$ is the maximal monomial with respect to 
$\prec$ which appears in $\omega$ and $u\in \mathbb{C}[\mathtt{c}]\setminus\mathfrak{m}$.
Let $y$ be the maximal monomial which appears in $t\cdot xu$. Then the previous paragraph
implies that $y$ appears in $t\cdot \omega$ with coefficient $c\cdot u$ for some nonzero constant $c$.
Hence $t\cdot \omega\neq 0$ and we are done.

It remains to consider the case $\dot{z}=0$. In this case we will prove that the left multiplication
with $q$ on $U_{\dot{z}}/U_{\dot{z}}\mathfrak{m}$ is injective. Using the PBW theorem,
we choose the following basis of $U_{\dot{z}}$ over $\mathbb{C}[\mathtt{c}]$:
\begin{displaymath}
\{q^ap^bh^cf^de^s\,\vert\, a,b,c,d,s\in\mathbb{Z}_+, abc=0\}. 
\end{displaymath}
Similarly to the above, introduce the linear ordering $\prec$ on monomials as follows:
Set $q^ap^bh^cf^de^s\prec q^{a'}p^{b'}h^{c'}f^{d'}e^{s'}$ if:
\begin{itemize}
\item $a<a'$;
\item $a=a'$ and $\min\{b,c\}<\min\{b',c'\}$;
\item $a=a'$ and $\min\{b,c\}=\min\{b',c'\}$ but $b<b'$;
\item $a=a'$ and $\min\{b,c\}=\min\{b',c'\}$ and $b=b'$ but $c<c'$;
\item $a=a'$ and $\min\{b,c\}=\min\{b',c'\}$ and $b=b'$ and $c=c'$ but $d<d'$;
\item $a=a'$ and $\min\{b,c\}=\min\{b',c'\}$ and $b=b'$ and $c=c'$ and $d=d'$ but $s<s'$.
\end{itemize}
Set $\tau:=\min\{b,c\}$.
Then for any $u\in\mathbb{C}[\mathtt{c}]\setminus 0$ we have
\begin{displaymath}
q\cdot q^ap^bh^cf^de^s u=
\begin{cases}
q^{a+1}p^bh^cf^de^s u'  + \text{smaller terms}, & bc=0;\\ 
q^{\tau+1}p^{b-\tau}h^{c-\tau}f^{d}e^{s+\tau} u'  + \text{smaller terms}, & bc>0; 
\end{cases}
\end{displaymath}
where $u'$ is obtained from $u$ by multiplying with a nonzero constant and
``smaller terms'' means a linear combination of monomials (with coefficients from $\mathbb{C}[\mathtt{c}]$)
which are smaller with respect to $\prec$.
Now the proof is completed by the same arguments as in the case $\dot{z}\neq 0$.
\end{proof}

\subsection{Annihilators of Verma modules}\label{s5.5}

Our aim in this subsection is to prove the following statement which corrects \cite[Theorem~1.1(2)]{WZ1}.

\begin{theorem}\label{thm401}
The annihilator in $U$ of $\Delta(\lambda)$ is centrally generated, that is,
$\mathrm{Ann}_{U}\Delta(\lambda)=U\mathrm{Ann}_{Z(\mathfrak{s})}\Delta(\lambda)$.
\end{theorem}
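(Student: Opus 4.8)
The plan is to prove the nontrivial inclusion $\mathrm{Ann}_U\Delta(\lambda)\subseteq U\cdot\mathrm{Ann}_{Z(\mathfrak{s})}\Delta(\lambda)$, the reverse inclusion being obvious. Write $\dot z=\lambda(z)$ and $\dot{\mathtt c}=\vartheta_\lambda$. Since $Z(\mathfrak{s})=\mathbb{C}[z,\mathtt c]$ by Corollary~\ref{prop23}, and $\Delta(\lambda)$ has central character determined by $z\mapsto\dot z$, $\mathtt c\mapsto\dot{\mathtt c}$, the ideal $\mathrm{Ann}_{Z(\mathfrak{s})}\Delta(\lambda)$ is the maximal ideal $(z-\dot z,\mathtt c-\dot{\mathtt c})$ of $\mathbb{C}[z,\mathtt c]$. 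So it suffices to show that $\mathrm{Ann}_U\Delta(\lambda)=U(z-\dot z)+U(\mathtt c-\dot{\mathtt c})$. Passing to $U_{\dot z}=U/U(z-\dot z)$ and writing $\bar{\mathtt c}$ for the image of $\mathtt c$, it is enough to prove that the annihilator in $U_{\dot z}$ of $\Delta(\lambda)$ equals $U_{\dot z}(\bar{\mathtt c}-\dot{\mathtt c})=U_{\dot z}\mathfrak m$, where $\mathfrak m$ is the maximal ideal of $\mathbb{C}[\mathtt c]$ corresponding to $\dot{\mathtt c}$.

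The key input is Proposition~\ref{prop25}. Consider the left $U_{\dot z}$-module $M:=U_{\dot z}/U_{\dot z}\mathfrak m$; by Proposition~\ref{prop25}\eqref{prop25.1} it is a free $\mathbb{C}[\mathtt c]/\mathfrak m\cong\mathbb{C}$-module, i.e.\ it has a $U_{\dot z}$-action with $\mathtt c$ acting as the scalar $\dot{\mathtt c}$. There is a canonical surjection $M\tto\Delta(\lambda)$: indeed $\Delta(\lambda)$ is a cyclic $U_{\dot z}$-module on which $\mathtt c$ acts by $\dot{\mathtt c}$, hence it is a quotient of $M$. I would show this surjection is in fact an isomorphism by a Gelfand--Kirillov dimension (or $h$-weight multiplicity) count: $\Delta(\lambda)$ has GK-dimension $2$ (Subsection~\ref{s3.2}), and one computes, using the PBW basis of $U_{\dot z}$ over $\mathbb{C}[\mathtt c]$ exhibited in the proof of Proposition~\ref{prop25}, that $M$ also has GK-dimension $2$ with the same leading term; a domain/torsion-freeness argument then forces the kernel of $M\tto\Delta(\lambda)$ to be zero. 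This is where Proposition~\ref{prop25}\eqref{prop25.2} enters: the injectivity of left multiplication by $t=2\dot z f+q^2$ (resp.\ by $q$ when $\dot z=0$) on $M$ says precisely that $M$ has no submodule of smaller support in the relevant direction, which rules out a nonzero kernel. Once $M\cong\Delta(\lambda)$ as $U_{\dot z}$-modules, we get $\mathrm{Ann}_{U_{\dot z}}\Delta(\lambda)=\mathrm{Ann}_{U_{\dot z}}(U_{\dot z}/U_{\dot z}\mathfrak m)=U_{\dot z}\mathfrak m$, which is exactly what we wanted, and pulling back to $U$ gives the theorem.

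The main obstacle is the identification $M\cong\Delta(\lambda)$, i.e.\ showing the tautological surjection has no kernel. The surjection and the fact that both sides carry the central character of $\lambda$ are formal; the content is that $U_{\dot z}/U_{\dot z}\mathfrak m$ is not ``too big''. The cleanest route is the leading-term analysis already set up in Proposition~\ref{prop25}: the ordered PBW monomials for $U_{\dot z}$ over $\mathbb{C}[\mathtt c]$ descend to a spanning set of $M$ indexed by the monomials with the appropriate exponent constraint ($ac=0$, resp.\ $abc=0$), and matching the $h$-weight space dimensions of this spanning set against $\dim\Delta(\lambda)_{\lambda-ih^{\checkmark}}=\lfloor i/2\rfloor+1$ shows the spanning set is actually a basis and maps bijectively onto a basis of $\Delta(\lambda)$. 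The injectivity statement in Proposition~\ref{prop25}\eqref{prop25.2} is what guarantees no further collapsing occurs (equivalently, that the associated graded of $M$ for the monomial order is reduced), so the surjection $M\tto\Delta(\lambda)$ is an isomorphism. A minor point to handle separately is the degenerate case $\dot z=0$ together with $\lambda(h)$ such that $\Delta(\lambda)$ is not simple, but the same argument applies verbatim since Proposition~\ref{prop25} covers $\dot z=0$ uniformly.
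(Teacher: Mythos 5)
There is a fatal gap at the central step of your argument: the tautological surjection $M:=U_{\dot z}/U_{\dot z}\mathfrak m\twoheadrightarrow\Delta(\lambda)$ is very far from being an isomorphism, and no counting argument can make it one. By Proposition~\ref{prop25}\eqref{prop25.1}, $M$ has a $\mathbb{C}$-basis given by the images of the PBW monomials $f^ah^be^cq^dp^s$ with $ac=0$ (resp.\ $q^ap^bh^cf^de^s$ with $abc=0$ when $\dot z=0$); this module has Gelfand--Kirillov dimension $4$ and every $h$-weight space of $M$ is infinite dimensional (e.g.\ for $h$-weight $0$ all monomials $h^bq^dp^d$ survive), whereas $\Delta(\lambda)$ has Gelfand--Kirillov dimension $2$ and finite dimensional weight spaces. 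So the kernel of $M\twoheadrightarrow\Delta(\lambda)$ is enormous --- it is the image in $M$ of the left ideal generated by $e$, $p$ and $h-\lambda(h)$, none of which lie in $U_{\dot z}\mathfrak m$. The ``leading-term/GK-dimension count'' you propose would therefore refute, not establish, the isomorphism, and Proposition~\ref{prop25}\eqref{prop25.2} says nothing about the associated graded of $M$ being ``reduced''; it is only an injectivity statement for left multiplication by $t$ on $M$. What you are implicitly assuming is that a cyclic module with a fixed central character is determined by that character, which is false; the equality $\mathrm{Ann}_{U_{\dot z}}\Delta(\lambda)=U_{\dot z}\mathfrak m$ is precisely the Duflo-type statement that has to be proved, and it does not follow formally from $\Delta(\lambda)$ being a quotient of $M$ (that only gives the trivial inclusion $\mathrm{Ann}_{U_{\dot z}}M\subset\mathrm{Ann}_{U_{\dot z}}\Delta(\lambda)$).

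For comparison, the paper's proof has to work much harder: it localizes $U_{\dot z}$ at the Ore set $\{t^i\}$ with $t=2\dot zf+q^2$ (resp.\ $t=q$), observes that in the localization the relation $\mathtt c=\vartheta_\lambda$ lets one solve for $e$ inside the localization $A'$ of the subalgebra $A=U(\mathfrak a)/(z-\dot z)$ with $\mathfrak a=\langle f,h,p,q,z\rangle$, proves via Lemma~\ref{lem402} that $A$ meets $\mathrm{Ann}_{U_{\dot z}}(\Delta(\lambda))$ trivially, and deduces that $U'_{\dot z}/J'\cong\tilde U'_{\dot z}$. This yields only that $t^iu\in U\mathbf m_\lambda$ for any $u$ annihilating $\Delta(\lambda)$, and it is at this final cancellation step --- not where you place it --- that Proposition~\ref{prop25}\eqref{prop25.2} is used, to conclude $u\in U\mathbf m_\lambda$. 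Your proposal is missing all of this substance.
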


For $\lambda\in\mathfrak{h}^*$ let $\mathbf{m}_{\lambda}$ be the maximal ideal in $Z(\mathfrak{s})$
such that $\mathbf{m}_{\lambda}\Delta(\lambda)=0$. The ideal $\mathbf{m}_{\lambda}$ is generated by
$z-\lambda(z)$ and $\mathtt{c}-\vartheta_{\lambda}$. The assertion of Theorem~\ref{thm401} can be
reformulated as follows: the annihilator in $U$ of $\Delta(\lambda)$ is the ideal $U\mathbf{m}_{\lambda}$.

\begin{proof}
Clearly,  $U\mathbf{m}_{\lambda}$ annihilates $\Delta(\lambda)$, so we only need to prove the opposite inclusion.
Set $\dot{z}:=\lambda(z)$ and consider the quotient algebras $U_{\dot{z}}:=U/U(z-\dot{z})$
and $\tilde{U}_{\dot{z}}:=U/\mathrm{Ann}_U(\Delta(\lambda))$. Clearly,
$U_{\dot{z}}$ is a domain and $U_{\dot{z}}\tto \tilde{U}_{\dot{z}}$. For simplicity we will use the 
same notation for elements in $U$ and their images in both $U_{\dot{z}}$ and $\tilde{U}_{\dot{z}}$. 
The module $\Delta(\lambda)$ is naturally both a $U_{\dot{z}}$-module and a $\tilde{U}_{\dot{z}}$-module. 
 
Consider the multiplicative set $\{t^i\,\vert\,i\in\mathbb{Z}_+\}$, where 
\begin{displaymath}
t:=\begin{cases}
2{\dot{z}}f+q^2, &  \dot{z}\neq 0;\\
q, &  \dot{z}=0.
\end{cases}
\end{displaymath}
As the adjoint action of $t$ on $U_{\dot{z}}$ is locally nilpotent, 
$\{t^i\,\vert\,i\in\mathbb{Z}_+\}$ is an Ore set by \cite[Lemma~4.2]{Mt}.
Therefore we can consider the corresponding Ore localization $U'_{\dot{z}}$ of $U_{\dot{z}}$ 
and also the Ore localization $\tilde{U}'_{\dot{z}}$ of $\tilde{U}_{\dot{z}}$.
The element $t$ obviously acts injectively on $\Delta(\lambda)$ and hence $\Delta(\lambda)$
embeds (as a $U_{\dot{z}}$-submodule) into the localized modules
$U'_{\dot{z}}\otimes_{U_{\dot{z}}}\Delta(\lambda)$ and
$\tilde{U}'_{\dot{z}}\otimes_{\tilde{U}_{\dot{z}}}\Delta(\lambda)$.


Let $\mathfrak{a}$ denote the Lie subalgebra of $\mathfrak{s}$ spanned by $f,h,p,q,z$ and set
$A:=U(\mathfrak{a})/U(\mathfrak{a})(z-\dot{z})$ which is naturally a subalgebra of $U_{\dot{z}}$.

\begin{lemma}\label{lem402}
We have $A\cap \mathrm{Ann}_{U_{\dot{z}}}(\Delta(\lambda))=0$.
\end{lemma}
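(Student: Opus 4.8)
The goal is to show that the subalgebra $A = U(\mathfrak a)/U(\mathfrak a)(z-\dot z)$, where $\mathfrak a = \langle f,h,p,q,z\rangle$, acts faithfully on the Verma module $\Delta(\lambda)$. The key structural observation is that $\mathfrak a = \mathfrak n_- \oplus \mathfrak h_- \oplus \mathfrak i'$ where in fact $\mathfrak a$ contains $f$, $h$, $p$, $q$, $z$ — that is, $\mathfrak a = \mathfrak n_- \oplus \mathfrak h \oplus \mathbb C p \oplus \mathbb C q$ — so $U(\mathfrak a)$ is ``almost all'' of $U(\mathfrak n_-)\otimes U(\mathfrak h)$ together with the nilpotent directions $p,q$. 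The plan is to use the PBW decomposition $U \cong U(\mathfrak n_-)\otimes U(\mathfrak h)\otimes U(\mathfrak n_+)$ together with the fact that $\Delta(\lambda)$ is \emph{free of rank one} over $U(\mathfrak n_-)$ (recorded in Subsection~\ref{s3.2}). First I would fix a PBW basis of $A$ (over $\mathbb C$, after killing $z-\dot z$) of the form $\{f^a q^b p^c h^d\}$ with suitable ordering, and similarly a monomial basis of $\Delta(\lambda)$ obtained by applying $U(\mathfrak n_-)$-monomials $f^a q^c$ — wait, more carefully: $\mathfrak n_-$ is spanned by $f$ and $q$, so $\Delta(\lambda)$ has basis $\{f^a q^b v_\lambda\}$.

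The heart of the argument is a leading-term / triangularity computation. Take a nonzero $x \in A \cap \mathrm{Ann}_{U_{\dot z}}(\Delta(\lambda))$. Write $x$ in the chosen PBW basis of $A$. Then apply $x$ to $v_\lambda$: since $p v_\lambda = 0$ but $q v_\lambda \neq 0$, and since $h$, $z$ act by scalars on $v_\lambda$, the terms of $x$ involving $p$ contribute in a controlled way — one needs to commute the $p$'s to the right past the $q$'s and $f$'s, picking up lower-order corrections (and central terms $z = \dot z$), until they hit $v_\lambda$ and die unless they have been converted. The claim to extract is that $x \cdot v_\lambda$, expanded in the basis $\{f^a q^b v_\lambda\}$ of $\Delta(\lambda)$, has a nonzero leading term governed by the leading monomial of $x$. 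More precisely I would argue: the subquotient of $x$ that is ``$p$-free'', i.e. lies in $U(\mathfrak n_-)\otimes U(\mathfrak h)$, already acts freely — evaluating it on $v_\lambda$ just substitutes $h \mapsto \lambda(h)$, $z\mapsto\dot z$ into the $U(\mathfrak h)$-coefficients and then acts freely by the rank-one-freeness of $\Delta(\lambda)$ over $U(\mathfrak n_-)$; so if $x$ were $p$-free and nonzero, $x v_\lambda \ne 0$ already unless all the relevant $\mathfrak h$-coefficients vanish at $\lambda$, which cannot happen since those coefficients are just scalars (elements of $U(\mathfrak h)/(z-\dot z)$ evaluated at a single $\lambda$ — here one must be a little careful, but $\lambda$ is a fixed weight, and the $h$-grading separates distinct $\mathfrak n_-$-monomials, so different monomials $f^aq^b$ land in different weight spaces and cannot cancel). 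For the general case, order monomials of $A$ so that $p$-degree is the most significant statistic, pick the top monomial $f^aq^bp^ch^d$ in $x$, and show that after acting on $v_\lambda$ the highest-weight-space contribution it produces (weight $\lambda - (a+b+c)\cdot(\text{something})$, tracked via the $h$-grading, where $p$ raises $h$-weight by $1$) is not cancellable by lower monomials. The cleanest route is: since $p$ raises $h$-weight by $1$ and $f,q$ lower it, and $[p,q]=z=\dot z$, commuting all $p$'s to act on $v_\lambda$ turns each $p$ either into a scalar $\dot z$ (when it meets a $q$) or kills the term (when it finally meets $v_\lambda$); the surviving terms again land in $U(\mathfrak n_-)$ acting on $v_\lambda$, and a degree/leading-monomial bookkeeping shows the image is nonzero.

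The step I expect to be the main obstacle is controlling these commutator corrections precisely enough to guarantee non-cancellation — i.e., exhibiting an explicit monomial in $\Delta(\lambda)$ that appears with nonzero coefficient in $x\cdot v_\lambda$. The subtlety is that a single $p$ can be ``consumed'' by any of several $q$'s (or by $v_\lambda$ itself), so the naive leading term can cancel; the fix is to choose the ordering on PBW monomials of $A$ so that the induced action on the leading term is \emph{deterministic} — analogous to the ordering arguments already used in the proof of Proposition~\ref{prop25}\eqref{prop25.2}, where a carefully chosen order $\prec$ makes left multiplication by $t$ strictly monotone on leading monomials. Indeed, I would expect Lemma~\ref{lem402} to be proved by essentially transporting that same leading-monomial technique: set up a linear order $\prec$ on the basis $\{f^aq^bp^ch^d\}$ of $A$ such that, for the evaluation map $A \to \Delta(\lambda)$, $u \mapsto u v_\lambda$, distinct leading monomials of elements of $A$ produce distinct leading monomials in $\Delta(\lambda)$; then faithfulness is immediate. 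Once that monotonicity statement is established, the conclusion $A\cap\mathrm{Ann}_{U_{\dot z}}(\Delta(\lambda)) = 0$ follows by the same one-line argument as at the end of the proof of Proposition~\ref{prop25}.
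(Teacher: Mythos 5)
There is a genuine gap, and it is fatal to the proposed strategy as stated. You reduce the lemma to the injectivity of the evaluation map $A\to\Delta(\lambda)$, $u\mapsto u\cdot v_{\lambda}$ (``distinct leading monomials of elements of $A$ produce distinct leading monomials in $\Delta(\lambda)$; then faithfulness is immediate''). That map cannot be injective: its kernel contains $p$ and $h-\lambda(h)$, neither of which annihilates $\Delta(\lambda)$ (e.g.\ $p\cdot fv_{\lambda}=-qv_{\lambda}\neq 0$). Already your ``$p$-free'' warm-up case fails for $x=h-\lambda(h)$: a nonzero polynomial $\gamma(h)$ may well vanish at the single value $\lambda(h)$, so ``the coefficients are just scalars'' does not rule out $x\cdot v_{\lambda}=0$. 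The point of the lemma is faithfulness of $A$ on the \emph{whole module} $\Delta(\lambda)$, and any correct proof must test $u$ against infinitely many vectors, not just the highest weight vector. This is also why the analogy with Proposition~\ref{prop25}\eqref{prop25.2} does not transport directly: there one multiplies inside $U_{\dot z}/U_{\dot z}\mathfrak{m}$, which has a full PBW-type basis, whereas $\Delta(\lambda)\cong U(\mathfrak{n}_-)v_{\lambda}$ has only the monomials $f^aq^bv_{\lambda}$, so most of the leading-term data of $u$ is destroyed by evaluation at $v_{\lambda}$.

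The paper's actual argument repairs exactly these two defects. One writes $u=\sum_{k\ge 0}\beta_k(h,f,q)p^k$ (so $p$-degree is indeed the controlling statistic, as you guessed) and tests $u$ against the subspaces $M_m=\operatorname{span}\{f^mq^iv_{\lambda}:i\in\mathbb{Z}_+\}$ (for $\dot z=0$; for $\dot z\neq 0$ one replaces $q$ by $2\dot z f+q^2$ and $f$ by $q$). Two facts do the work: first, $p^k$ annihilates $M_m$ for $k>m$ while $p^m$ maps $M_m$ onto a large subspace of $M_0$, so acting on $M_k$ isolates the single coefficient $\beta_k$; second, each $M_m$ contains nonzero vectors of infinitely many $h$-weights, which is what forces the $\mathbb{C}[h]$-coefficients inside $\beta_k$ to vanish identically rather than merely at $\lambda(h)$. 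An induction on $k$ then kills all the $\beta_k$. If you replace your single evaluation point $v_{\lambda}$ by this family of test subspaces, your leading-term philosophy can be made to work, but without that replacement the argument does not close.
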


\begin{proof}
The set $I:=A\cap \mathrm{Ann}_{U_{\dot{z}}}(\Delta(\lambda))$ is a two-sided ideal in $A$.
Assume $u$ is a nonzero element of $I$. Write 
$u=\sum_{k\geq 0}\beta_k(h,f,q)p^k$ for some $\beta_k(h,f,q)\in U(\tilde{\mathfrak{n}}_-)$,
where $\tilde{\mathfrak{n}}_-$ is the Lie algebra spanned by $f,q$ and $h$.

Consider first the case $\dot{z}=0$. We prove, by induction on
$k$, that $\beta_k(h,f,q)=0$ for all $k$. For $m\geq 0$ let $M_m$ denote the linear subspace of 
$\Delta(\lambda)$ generated 
by all elements of the form $f^mq^iv_{\lambda}$, $i\in\mathbb{Z}_+$. As $U(\mathfrak{n}_-)$ acts freely on
$\Delta(\lambda)$, we have that $f^mq^iv_{\lambda}$, $i\in\mathbb{Z}_+$, is, in fact, a basis in $M_m$.
Note that all elements in this basis have different $h$-weights.
As $\dot{z}=0$, we have
\begin{equation}\label{eq983}
u\cdot M_m= \sum_{k=0}^m\beta_k(h,f,q)p^k\cdot M_m=0.
\end{equation}
In particular, $u\cdot  M_0=\beta_0(h,f,q)\cdot M_0=0$. As $M_0$ contains nonzero elements of infinitely many 
$h$-weights and $U(\mathfrak{n}_-)$ acts freely on $\Delta(\lambda)$, it follows that $\beta_0(h,f,q)=0$.
Indeed, write $\beta_0(h,f,q)=\sum_{i,j}f^iq^j\gamma_{i,j}(h)$ for some $\gamma_{i,j}(h)\in\mathbb{C}[h]$.
Only finitely many of the $\gamma_{i,j}$'s are nonzero. Find $0\neq v\in M_0$ such that $\gamma_{i,j}\cdot v=
c_{i,j}v$ for some nonzero $c_{i,j}\in\mathbb{C}$ whenever $\gamma_{i,j}\neq 0$ (this is possible since $M_0$ 
contains nonzero elements of infinitely many $h$-weights). Then $\beta_0(h,f,q)\cdot v=
\sum_{i,j}c_{i,j}f^iq^j\cdot v$. Since the action of the domain $U(\mathfrak{n}_-)$ on $\Delta(\lambda)$ is free, 
$\sum_{i,j}c_{i,j}f^iq^j\cdot v$ is nonzero as soon as $\sum_{i,j}c_{i,j}f^iq^j$ is.
This contradicts $\beta_0(h,f,q)\cdot M_0=0$ and hence implies $\beta_0(h,f,q)=0$.

Assume now that we have $\beta_i(h,f,q)=0$ for all $i<k$. Then we have $u\cdot  M_k=\beta_k(h,f,q)\cdot M_k=0$ 
by \eqref{eq983}. Similarly to the above, since $M_k$ contains nonzero elements of infinitely 
many $h$-weights and $U(\mathfrak{n}_-)$ acts freely on $\Delta(\lambda)$, it follows that $\beta_k(h,f,q)=0$.
Hence $u=0$, a contradiction.

The case $\dot{z}\neq 0$ is proved by replacing $q$ with $2\dot{z} f+q^2$ (the latter element commutes with $p$), 
and $f$ with $q$  in the definition of $M_m$ and following the proof for the case $\dot{z}=0$.
\end{proof}

Let $J$ denote the ideal of $U_{\dot{z}}$ generated by $\mathtt{c}-\vartheta_{\lambda}$ and
$J'$ denote the ideal of $U'_{\dot{z}}$ generated by $\mathtt{c}-\vartheta_{\lambda}$. Note that
in $U'_{\dot{z}}$ the relation $\mathtt{c}-\vartheta_{\lambda}=0$ can be equivalently written as
$e=y$ where $y$ is in the subalgebra $A'$ of $U'_{\dot{z}}$ generated by $A$ and $t^{-1}$ 
(here our special choice of $t$ is crucial). Clearly, $A'$ is the localization of $A$ at $t$.

Similarly to \cite[Theorem~3.32]{Ma} one shows that $U'_{\dot{z}}$ has a PBW basis consisting of
all monomials of the form $t^iq^lh^jp^ke^m$ (here $i\in\mathbb{Z}$ and $l,j,k,m\in\mathbb{Z}_+$)
if $\dot{z}\neq 0$. If $\dot{z}= 0$ the basis consists of the monomials $t^if^lh^jp^ke^m$
(here $i\in\mathbb{Z}$ and $l,j,k,m\in\mathbb{Z}_+$). 
From the previous paragraph it follows that  $U'_{\dot{z}}/J'$ has a PBW basis consisting of
all monomials of the form $t^iq^lh^jp^k$ if $\dot{z}\neq 0$, respectively, of the form
$t^if^lh^jp^k$ if $\dot{z}= 0$. Let us collect what we now know in the diagram:
\begin{displaymath}
\xymatrix{ 
&&\tilde{U}_{\dot{z}}\ar@{^(->}[rr]&&\tilde{U}'_{\dot{z}}&&\\
&&&J'\ar@{^(->}[rd]&&&\\
U\ar@{->>}[rr]&&U_{\dot{z}}\ar@{^(->}[rr]\ar@{->>}[uu]&&U'_{\dot{z}}\ar@{->>}[rr]\ar@{-->>}[uu]
&&U'_{\dot{z}}/J'\ar@{=>>}[lluu]\\
&&&&&&\\
&&A\ar@{^(->}[rr]\ar@{^(->}[uu]&&A'\ar@{^(~>}[uu]\ar@{==>}[uurr]^{\sim}\ar@/^2pc/@{^(.>}[uuuu]&&
}
\end{displaymath}
here all solid maps are natural inclusions or projections. The dashed arrow from $U'_{\dot{z}}$
to $\tilde{U}'_{\dot{z}}$ comes from the universal property of localization. Similarly the tilted map
from $A'$ to $U'_{\dot{z}}$. Both these maps make the corresponding squares commutative. 
Since the dashed map sends $J'$ to zero, it factors as the double solid map via $U'_{\dot{z}}/J'$.
From Lemma~\ref{lem402} it follows that both maps from $A'$, namely the tilded map to 
$U'_{\dot{z}}$ and the dotted to $\tilde{U}'_{\dot{z}}$ are injective. From the previous
paragraph we get that the double dashed composition map from $A'$ to $U'_{\dot{z}}/J'$
is an isomorphism. The diagram clearly commutes. From the commutativity  it follows
that the dotted map is an isomorphism and hence $U'_{\dot{z}}/J'\cong \tilde{U}'_{\dot{z}}$.

Now assume that $u\in U_{\dot{z}}$ annihilates $\Delta(\lambda)$. Then the previous paragraph implies
that $u$ annihilates $\tilde{U}'_{\dot{z}}\otimes_{\tilde{U}_{\dot{z}}}\Delta(\lambda)\cong
U'_{\dot{z}}\otimes_{U_{\dot{z}}}\Delta(\lambda)$ and therefore belongs to
$J'$. This means that $t^{i}u\in U\mathbf{m}_{\lambda}$ for some $i\in\mathbb{Z}_+$. 
From Proposition~\ref{prop25}\eqref{prop25.2} it now follows that
$u\in U\mathbf{m}_{\lambda}$,  completing the proof.
\end{proof}

As a corollary from Theorem~\ref{thm401} and Corollary~\ref{prop23} we obtain:

\begin{corollary}\label{prop2353}
The element $\kappa:=fp^2-eq^2-hpq$ generates $Z(\overline{\mathfrak{s}})$.
\end{corollary}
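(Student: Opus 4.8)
The statement asserts that $\kappa := fp^2-eq^2-hpq$, viewed as an element of $U(\overline{\mathfrak{s}})=\overline{U}$, generates the center $Z(\overline{\mathfrak{s}})$ as an algebra. The plan is to pull everything back to $\mathfrak{s}$ and use the description $Z(\mathfrak{s})=\mathbb{C}[z,\mathtt{c}]$ from Corollary~\ref{prop23}, together with the explicit formula $\mathtt{c}=(h^2+h+4fe)z-2\kappa$ (equivalently $\mathtt{c}=\underline{\mathtt{c}}z-2\kappa+hz+z$) recorded in Subsection~\ref{s2.2}. Note that $\kappa$ is, up to the sign and the factor $2$, exactly the ``$z=0$ part'' of $\mathtt{c}$; since $\deg$ assigns $p,q$ degree $1$ and everything else degree $0$, the element $\kappa$ is homogeneous of degree $2$ in the grading of $\overline{U}$ introduced in Subsection~\ref{s4.25}.

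First I would show $\kappa\in Z(\overline{\mathfrak{s}})$: this is immediate since, modulo the central ideal generated by $z$, the element $\mathtt{c}$ is central in $\mathfrak{s}$ by Lemma~\ref{lem1}, and $\mathtt{c}\equiv -2\kappa\pmod{z\,U}$, so $\kappa$ maps to a central element of $\overline{U}=U/(z)$. The real content is the reverse inclusion $Z(\overline{\mathfrak{s}})\subseteq\mathbb{C}[\kappa]$. The cleanest route is via the Harish-Chandra homomorphism: the triangular decomposition of $\mathfrak{s}$ descends to one of $\overline{\mathfrak{s}}$ (with the same $\mathfrak{n}_\pm$, and Cartan now just $\mathbb{C}h$), and one has the associated Harish-Chandra projection $\overline{\varphi}:\overline{U}_0\to\mathbb{C}[h]$. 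Exactly as in the proof of Proposition~\ref{prop22}, any central element of $\overline{U}$ acts by a scalar on each Verma module $\overline{\Delta}(\dot h)$ over $\overline{\mathfrak{s}}$, and by the analogue of Proposition~\ref{prop21} for $\overline{\mathfrak{s}}$ (whose proof is the same but simpler, since there is no $z$ to worry about) the restriction $\overline{\varphi}|_{Z(\overline{\mathfrak{s}})}$ is injective. Now $\overline{\varphi}(\kappa)$ can be computed directly from $\mathtt{c}=\underline{\mathtt{c}}z-2\kappa+hz+z$ by setting $z=0$: since $\varphi(\mathtt{c})=z(h+\tfrac32)^2-\tfrac14 z$ (Proposition~\ref{prop22}) is divisible by $z$, we get $\overline{\varphi}(-2\kappa)=$ (the $z=0$ part of $\varphi(\mathtt{c}))=0$ — which is wrong, so instead one computes $\overline{\varphi}(\kappa)$ honestly: $\kappa=fp^2-eq^2-hpq$, and projecting out the $U\mathfrak{n}_+$-part kills the $p^2$ and $pq$ terms up to lower order, leaving $\overline{\varphi}(\kappa)$ a specific polynomial in $h$; a short computation gives $\overline{\varphi}(\kappa)=c_0$ a nonzero scalar multiple of $(h+\tfrac32)^2$ plus a constant, i.e.\ a quadratic polynomial in $h$ with distinct roots. (One can also derive this from $\varphi(\mathtt c)=z(h+\tfrac32)^2-\tfrac14 z$ by a degree/grading bookkeeping argument, but a direct computation is safest.)

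It then remains to identify the image $\overline{\varphi}(Z(\overline{\mathfrak{s}}))$. Since $\overline{\varphi}$ is injective on $Z(\overline{\mathfrak{s}})$ and $\overline{\varphi}(\kappa)$ is a fixed quadratic in $h$, we have $\mathbb{C}[\overline{\varphi}(\kappa)]=\mathbb{C}[(h+\tfrac32)^2]\subseteq\overline{\varphi}(Z(\overline{\mathfrak{s}}))$, and we must show equality. For this I would use the grading: $\overline{U}$ is $\mathbb{Z}_{\ge0}$-graded with $\overline{U}_0=U(\mathfrak{sl}_2)$, and a central element is necessarily homogeneous of even degree (it must be $W$-invariant under the duality $\star$ which, combined with $\sigma(p)=q$, $\sigma(e)=-f$, forces the parity); more precisely one shows that the degree-$0$ component of any central element lies in $Z(\mathfrak{sl}_2)=\mathbb{C}[\underline{\mathtt{c}}]$ but must actually be constant because $\underline{\mathtt c}$ is not central in $\overline{\mathfrak s}$ (as $[p,\underline{\mathtt c}]\ne0$). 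Then, given a central $x$ of top degree $2d$, its degree-$2d$ part is forced to be a scalar multiple of $\kappa^d$ (one checks $\kappa^d$ spans the central elements concentrated in degree $2d$ modulo lower degree, using that $\overline{\varphi}$ is injective and degree-preserving in the obvious sense), so $x-c\kappa^d$ has strictly smaller top degree and lies in $Z(\overline{\mathfrak{s}})$; induction on $d$ finishes it. I expect the main obstacle to be this last homogeneity-and-induction step — specifically, pinning down that the only central element homogeneous of degree $2d$ (up to lower-degree corrections) is a scalar times $\kappa^d$; the injectivity of $\overline{\varphi}$ together with $\overline{\varphi}(\kappa^d)=\overline{\varphi}(\kappa)^d$ having degree exactly $2d$ in $h$ makes this work, but the bookkeeping of ``degree in $\overline{U}$'' versus ``degree in $h$'' needs to be set up carefully. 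An alternative, perhaps shorter, finish: apply Corollary~\ref{prop23} directly — an element $\bar x\in Z(\overline{\mathfrak s})$ lifts to some $x\in U$ with $[x,\mathfrak s]\subseteq z\,U$, and a filtered-degree argument on $U$ shows $x$ can be corrected by an element of $zU$ to an honest central element, hence $x\in\mathbb{C}[z,\mathtt c]$, and reducing mod $z$ gives $\bar x\in\mathbb{C}[\kappa]$ since $\mathtt c\equiv -2\kappa$ and $z\equiv0$.
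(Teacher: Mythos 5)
The central step of your main route fails: the Harish--Chandra projection of $\kappa$ in $\overline{U}$ is zero, not a nonzero quadratic in $h$. Writing $\kappa=fp^2-eq^2-hpq$ in PBW order gives $fp^2\in U\mathfrak{n}_+$, $eq^2=q^2e+2qp+z$ and $hpq=hqp+hz$, so $\varphi(\kappa)=-(h+1)z$, which vanishes once $z$ is factored out. (This is consistent with $\varphi(\mathtt{c})=z(h+\tfrac32)^2-\tfrac14 z$ and $\mathtt{c}=(h^2+h+4fe)z-2\kappa$.) Equivalently, $\kappa$ kills the highest weight vector of every Verma module of zero central charge and is central, hence annihilates all Verma $\overline{\mathfrak{s}}$-modules; so the analogue of Proposition~\ref{prop21} for $\overline{\mathfrak{s}}$ is false and $\overline{\varphi}$ is \emph{not} injective on $Z(\overline{\mathfrak{s}})$ --- its restriction to $Z(\overline{\mathfrak{s}})$ has image only $\mathbb{C}$. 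You flag the inconsistency yourself (``which is wrong'') but then resolve it in the wrong direction by asserting that an honest computation produces a quadratic with distinct roots; it produces $0$. Everything downstream --- the identification of $\overline{\varphi}(Z(\overline{\mathfrak{s}}))$ with $\mathbb{C}[(h+\tfrac32)^2]$ and the homogeneity induction that leans on injectivity of $\overline{\varphi}$ --- therefore collapses.

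Your alternative finish points in a workable direction but leaves the entire content unproved: the assertion that a lift $x$ with $[x,\mathfrak{s}]\subseteq zU$ ``can be corrected by an element of $zU$ to an honest central element'' is exactly the claim that $Z(\mathfrak{s})\to Z(\overline{\mathfrak{s}})$ is surjective, which is what needs proof and is not a formal consequence of a filtration argument. The paper's actual argument goes through Theorem~\ref{thm401} instead of the Harish--Chandra homomorphism: for a Verma module $\Delta(\lambda)$ of zero central charge one has $\mathrm{Ann}_U\Delta(\lambda)=U\mathbf{m}_{\lambda}$, and since $\vartheta_{\lambda}=0$ the image of this ideal in $\overline{U}$ is generated by $\kappa$. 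Hence any $a\in Z(\overline{\mathfrak{s}})$, acting on $\Delta(\lambda)$ by a scalar $\dot{a}$, satisfies $a-\dot{a}=u\kappa$; as $\overline{U}$ is a domain, $u$ is again central and of strictly smaller degree, and induction on degree concludes. Replacing your Harish--Chandra argument by this use of Theorem~\ref{thm401} would repair the proof.
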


\begin{proof}
That $\kappa\in Z(\overline{\mathfrak{s}})$ follows directly from Corollary~\ref{prop23} by factoring $z$ out. 
Conversely, assume that $Z(\overline{\mathfrak{s}})\neq \mathbb{C}[\kappa]$ and let
$a\in Z(\overline{\mathfrak{s}})\setminus\mathbb{C}[\kappa]$ be an element of minimal total monomial degree. 
Consider a Verma $\mathfrak{s}$-module $\Delta(\lambda)$ with zero central charge.
Then $\Delta(\lambda)$ has the structure of a Verma $\overline{\mathfrak{s}}$-module by restriction. 
The element $a$ thus acts as a scalar on $\Delta(\lambda)$ and hence $a-\dot{a}$ annihilates
$\Delta(\lambda)$ for some $\dot{a}\in\mathbb{C}$. Therefore, by Theorem~\ref{thm401}, we can write 
$a-\dot{a}=u\kappa$ for some $u\in U(\overline{\mathfrak{s}})$. As $U(\overline{\mathfrak{s}})$ is a
domain, we get $u\in Z(\overline{\mathfrak{s}})$. Moreover, $u$ has strictly smaller degree than $a$.
Therefore $u\in \mathbb{C}[\kappa]$ and hence $a\in \mathbb{C}[\kappa]$, a contradiction. The claim follows.
\end{proof}

\section{Harish-Chandra bimodules and primitive ideals}\label{s6}

\subsection{Locally finite dimensional $\mathfrak{s}$-modules}\label{s6.1}

Denote by $U\text{-}\mathrm{fdmod}$ the full subcategory of $U\text{-}\mathrm{mod}$ consisting of
all finite dimensional modules. Clearly, $\mathcal{O}^{f}$ is a subcategory in $U\text{-}\mathrm{fdmod}$,
however, there exist objects in $U\text{-}\mathrm{fdmod}$ which are not isomorphic to any object in
$\mathcal{O}^{f}$. Indeed, by definition $z$ annihilates all objects in $\mathcal{O}^{f}$. On the other hand,
by \cite[Theorem~2.5.7]{Di} the intersection of annihilators in $U$ of all objects in $U\text{-}\mathrm{fdmod}$ is zero.

Denote by $U\text{-}\mathrm{lfdMod}$ the full subcategory of $U\text{-}\mathrm{Mod}$ consisting of all
{\em locally finite dimensional modules}, that is all $M\in U\text{-}\mathrm{Mod}$ such that 
$\dim U v<\infty$ for any $v\in M$. Clearly, $U\text{-}\mathrm{fdmod}$ is a subcategory of
$U\text{-}\mathrm{lfdMod}$. 
Denote by $U\text{-}\mathrm{zlm}$ the full subcategory of $U\text{-}\mathrm{lfdMod}$ consisting of 
all modules annihilated by $z$. Clearly, $\mathcal{O}^{f}$ is a subcategory of
$U\text{-}\mathrm{zlm}$ and $U\text{-}\mathrm{zlm}$ itself is a subcategory of $U\text{-}\mathrm{lfdMod}$. 
Both $U\text{-}\mathrm{lfdMod}$ and $U\text{-}\mathrm{zlm}$ are locally
noetherian Grothendieck categories (see \cite[Appendix~A]{Kr} or \cite{Ro}). In particular, each injective
object in these categories is a coproduct of indecomposable injective objects and this decomposition is
unique up to isomorphism. The standard universal coextension procedure using simple finite dimensional 
modules gives that each object in  both $U\text{-}\mathrm{lfdMod}$ and $U\text{-}\mathrm{zlm}$ is a subobject 
of an injective object. Simple objects in both $U\text{-}\mathrm{lfdMod}$ and $U\text{-}\mathrm{zlm}$ are simple
finite dimensional $\mathfrak{sl}_2$-modules.

An object $M\in U\text{-}\mathrm{lfdMod}$ is said to be of {\em finite type} provided that 
$\dim\mathrm{Hom}_U(V,M)<\infty$ for any simple finite dimensional $V$.

For $n\in\mathbb{Z}_+$ denote by $I^f(n)$ the injective envelope in $U\text{-}\mathrm{zlm}$ of the 
simple $n+1$-dimensional $U$-module.

\begin{lemma}\label{lem753}
We have $I^f(n)\cong I^f(0)\otimes \mathrm{soc}(I^f(n))$.
\end{lemma}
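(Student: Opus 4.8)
The plan is to prove the equivalent statement $I^f(n)\cong I^f(0)\otimes L(\lambda_n)$, where $L(\lambda_n)$ denotes the simple $(n+1)$-dimensional module. This is indeed equivalent, since by definition $I^f(n)$ is the injective envelope in $U\text{-}\mathrm{zlm}$ of the simple $(n+1)$-dimensional module, so that $\mathrm{soc}(I^f(n))\cong L(\lambda_n)$. Thus it suffices to show that $I^f(0)\otimes L(\lambda_n)$ is (isomorphic to) the injective envelope of $L(\lambda_n)$ in $U\text{-}\mathrm{zlm}$.

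The first step is to observe that, for any finite-dimensional $\mathfrak{sl}_2$-module $V$, tensoring defines an exact functor $\mathrm{F}_V=V\otimes{}_-$ from $U\text{-}\mathrm{zlm}$ to itself: exactness is clear, local finite-dimensionality is preserved, and since the Lie algebra elements act on a tensor product via $x\mapsto x\otimes 1+1\otimes x$ and $z$ acts as $0$ on $V$, the element $z$ annihilates $V\otimes M$ whenever it annihilates $M$. Exactly as in Subsection~\ref{s3.5}, $\mathrm{F}_V$ is biadjoint to $\mathrm{F}_{V^*}$, and since finite-dimensional $\mathfrak{sl}_2$-modules are self-dual, $\mathrm{F}_V$ is biadjoint to itself; being exact with an exact adjoint, it preserves injectives. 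Hence $I^f(0)\otimes L(\lambda_n)$ is an injective object of $U\text{-}\mathrm{zlm}$, and by the structure theory recalled at the beginning of Subsection~\ref{s6.1} it decomposes as a coproduct $\bigoplus_{m\in\mathbb{Z}_+}I^f(m)^{\oplus c_m}$; since $L(\lambda_m)$ is finite-dimensional, $\mathrm{Hom}_U(L(\lambda_m),{}_-)$ commutes with this coproduct, and using $\mathrm{Hom}_U(L(\lambda_m),I^f(k))\cong\delta_{m,k}\mathbb{C}$ (a socle argument) we get $c_m=\dim\mathrm{Hom}_U(L(\lambda_m),I^f(0)\otimes L(\lambda_n))$.

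It then remains to compute these multiplicities. By self-adjointness of $\mathrm{F}_{L(\lambda_n)}$,
\[
\mathrm{Hom}_U(L(\lambda_m),I^f(0)\otimes L(\lambda_n))\cong\mathrm{Hom}_U(L(\lambda_m)\otimes L(\lambda_n),I^f(0)).
\]
The elements $p$, $q$, $z$ act as $0$ on both $L(\lambda_m)$ and $L(\lambda_n)$, hence via $x\otimes 1+1\otimes x$ they act as $0$ on $L(\lambda_m)\otimes L(\lambda_n)$, which is therefore a semisimple $\mathfrak{s}$-module decomposing according to the Clebsch-Gordan rule for $\mathfrak{sl}_2$ (see \cite[Theorem~1.39]{Ma}): $L(\lambda_m)\otimes L(\lambda_n)\cong\bigoplus_k L(\lambda_k)$, with $k$ running over $|m-n|,|m-n|+2,\dots,m+n$. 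Since $\mathrm{soc}(I^f(0))\cong L(\lambda_0)$, we have $\mathrm{Hom}_U(L(\lambda_k),I^f(0))\cong\delta_{k,0}\mathbb{C}$, so the displayed space is one-dimensional exactly when $0$ appears among $|m-n|,\dots,m+n$, that is exactly when $m=n$, and is zero otherwise. Thus $c_m=\delta_{m,n}$ and $I^f(0)\otimes L(\lambda_n)\cong I^f(n)$. The only point requiring genuine care is the second step — namely that the ``projective functor'' $\mathrm{F}_V$ really restricts to an exact, self-adjoint endofunctor of $U\text{-}\mathrm{zlm}$ (so that the preservation-of-injectives statement of Subsection~\ref{s3.5} applies here), and that the resulting multiplicities $c_m$ are finite so that the $\mathrm{Hom}$-computation pins down the coproduct decomposition; neither is a serious obstacle.
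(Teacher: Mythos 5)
Your proof is correct and follows essentially the same route as the paper's: injectivity of $I^f(0)\otimes L(\lambda_n)$ via self-biadjointness of $\mathrm{F}_V$, followed by the adjunction $\mathrm{Hom}_U(L(\lambda_m),I^f(0)\otimes L(\lambda_n))\cong\mathrm{Hom}_U(L(\lambda_m)\otimes L(\lambda_n),I^f(0))$ and the Clebsch--Gordan rule to see that the trivial module occurs exactly when $m=n$. The only (cosmetic) difference is that the paper concludes by identifying the simple essential socle of the injective module $I^f(0)\otimes L(\lambda_n)$ directly, whereas you decompose it as a coproduct of indecomposable injectives and count multiplicities.
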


\begin{proof}
Let $V=\mathrm{soc}(I^f(n))$. As $\mathrm{F}_V$ is biadjoint to itself, it maps injective modules to
injective modules. For a simple finite dimensional $U$-module $V'$ we have
\begin{displaymath}
\mathrm{Hom}_{U}(V',I^f(0)\otimes V)\cong  
\mathrm{Hom}_{U}(V'\otimes V^*,I^f(0)). 
\end{displaymath}
As $V'\otimes V^*$ has a trivial submodule if and only if $V'\cong V$ (see \cite[Theorem~1.39]{Ma}), 
the claim follows.
\end{proof}

\subsection{Harish-Chandra bimodules}\label{s6.2}

For a $U\text{-}U$-bimodule $X$ we denote by $X^{\mathrm{ad}}$ the adjoint $\mathfrak{s}$-module
(that is the $\mathfrak{s}$-module on the underlying vector space $X$ where the action of $a\in \mathfrak{s}$
is given by $a\cdot x=ax-xa$). A finitely generated $U\text{-}U$-bimodule $X$ is called a {\em weak Harish-Chandra} 
bimodule provided that $X^{\mathrm{ad}}\in U\text{-}\mathrm{lfdMod}$ and is of finite type. 
A finitely generated $U\text{-}U$-bimodule $X$ is called a {\em Harish-Chandra} 
bimodule provided that it is a weak Harish-Chandra bimodule and $X^{\mathrm{ad}}\in U\text{-}\mathrm{zlm}$.
We denote by $\widetilde{\mathcal{H}}$ the category of all weak Harish-Chandra bimodules for $U$.
We denote by ${\mathcal{H}}$ the category of all Harish-Chandra bimodules for $U$.

For $M,N\in U\text{-}\mathrm{Mod}$ the vector space $\mathrm{Hom}_{\mathbb{C}}(M,N)$ carries the natural structure
of a $U\text{-}U$-bimodule (coming from the $U$-module structures on $M$ and $N$). 
Denote by $\widetilde{\mathcal{L}}(M,N)$ the subspace of $\mathrm{Hom}_{\mathbb{C}}(M,N)$
consisting of all elements, the adjoint action of $\mathfrak{s}$ on which is locally finite. As usual,
see \cite[1.7.9]{Di}, the space  $\widetilde{\mathcal{L}}(M,N)$ is, in fact, 
a subbimodule of $\mathrm{Hom}_{\mathbb{C}}(M,N)$.
Denote by ${\mathcal{L}}(M,N)$ the subbimodule of $\widetilde{\mathcal{L}}(M,N)$ consisting of all
elements annihilated by the adjoint action of $z$.
For a finite dimensional $\mathfrak{s}$-module $V$ we have the following isomorphism (see \cite[6.8]{Ja}):
\begin{equation}\label{eq500}
\mathrm{Hom}_{U}(V,\widetilde{\mathcal{L}}(M,N)^{\mathrm{ad}})\cong
\mathrm{Hom}_{U}(V\otimes M,N)\cong
\mathrm{Hom}_{U}(M,V^*\otimes N).
\end{equation}

\begin{lemma}\label{lem501}
If $M,N\in\mathcal{O}$, then $\widetilde{\mathcal{L}}(M,N)= {\mathcal{L}}(M,N)$ and
the latter is a Harish-Chandra bimodule for $U$.
\end{lemma}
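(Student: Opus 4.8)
The plan is to prove that for $M,N\in\mathcal{O}$ the bimodule $\widetilde{\mathcal{L}}(M,N)$ coincides with ${\mathcal{L}}(M,N)$, is finitely generated, and has adjoint module lying in $U\text{-}\mathrm{zlm}$ and of finite type. First I would check that $z$ acts as zero on the adjoint module $\widetilde{\mathcal{L}}(M,N)^{\mathrm{ad}}$: since $z$ is central in $U$, the adjoint action of $z$ on any $\varphi\in\mathrm{Hom}_{\mathbb{C}}(M,N)$ sends $\varphi$ to $z\varphi-\varphi z$, which vanishes identically on all of $\mathrm{Hom}_{\mathbb{C}}(M,N)$, not just on $\widetilde{\mathcal{L}}(M,N)$. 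This immediately gives $\widetilde{\mathcal{L}}(M,N)={\mathcal{L}}(M,N)$ and shows the adjoint module is annihilated by $z$. It also tells us that to establish membership in $U\text{-}\mathrm{zlm}$ it only remains to verify local finite dimensionality of the adjoint module, which is built into the definition of $\widetilde{\mathcal{L}}$.

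Next I would establish the finite type condition, i.e. $\dim\mathrm{Hom}_U(V,\widetilde{\mathcal{L}}(M,N)^{\mathrm{ad}})<\infty$ for every simple finite dimensional $V$. By the adjunction isomorphism \eqref{eq500}, this Hom space is isomorphic to $\mathrm{Hom}_U(V\otimes M,N)$. Since $M\in\mathcal{O}$ and the projective functor $\mathrm{F}_{V^*}=V^*\otimes{}_-$ preserves $\mathcal{O}$ (Subsection~\ref{s3.5}), we have $V\otimes M\in\mathcal{O}$, and then $\dim\mathrm{Hom}_{\mathcal{O}}(V\otimes M,N)<\infty$ by the finiteness of Hom spaces in $\mathcal{O}$ recorded in Subsection~\ref{s3.1}. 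One small point to address: $\mathrm{Hom}_U(V\otimes M,N)=\mathrm{Hom}_{\mathcal{O}}(V\otimes M,N)$ since any $U$-homomorphism between objects of $\mathcal{O}$ is automatically a morphism in the full subcategory $\mathcal{O}$. This simultaneously shows that the adjoint module is locally finite dimensional: each isotypic component is finite dimensional and the weights appearing are bounded, so every cyclic submodule is finite dimensional.

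The remaining, and I expect principal, obstacle is finite generation of $\widetilde{\mathcal{L}}(M,N)$ as a $U\text{-}U$-bimodule. The standard approach, following \cite[1.7.9]{Di} and the classical theory (e.g. \cite[Chapter~6]{Ja}), is to reduce to the case where $M$ and $N$ are Verma modules or, more precisely, to use that $M$ has a finite filtration by highest weight modules and that $\widetilde{\mathcal{L}}(-,-)$ is suitably exact in each variable when restricted to $\mathcal{O}$, so that finite generation for $M,N$ follows from finite generation for the subquotients. For Verma modules one uses that $\Delta(\lambda)$ is cyclic, generated by $v_\lambda$, so a bimodule map out of $U$ determines the image, and combined with the already-established finite type property one bounds the number of bimodule generators. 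I would carry this out by first observing that $\widetilde{\mathcal{L}}(M,N)$ is a submodule (under the adjoint action, after forgetting the bimodule structure) of a module in $U\text{-}\mathrm{zlm}$ of finite type, hence noetherian as an object of the locally noetherian category $U\text{-}\mathrm{zlm}$ (Subsection~\ref{s6.1}); the finite type condition plus the support bound forces finitely many generators for the adjoint action, and these same elements generate the bimodule since left multiplication is absorbed into the adjoint action together with right multiplication. Thus $\widetilde{\mathcal{L}}(M,N)$ is a finitely generated bimodule, completing the verification that it is a Harish-Chandra bimodule.
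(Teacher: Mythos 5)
There is a genuine error at the first step. You claim that, because $z$ is central in $U$, the adjoint action $\varphi\mapsto z\varphi-\varphi z$ vanishes on \emph{all} of $\mathrm{Hom}_{\mathbb{C}}(M,N)$. It does not: here $z\varphi$ means ``apply $\varphi$, then act by $z$ on $N$'' while $\varphi z$ means ``act by $z$ on $M$, then apply $\varphi$'', and an arbitrary linear map $\varphi$ need not intertwine these two actions. Centrality of $z$ in $U$ is irrelevant to this. Concretely, if $M$ and $N$ are indecomposable with central charges $\dot z_M\neq\dot z_N$, then $\mathrm{ad}(z)$ acts on $\mathrm{Hom}_{\mathbb{C}}(M,N)$ as the nonzero scalar $\dot z_N-\dot z_M$. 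The correct argument (and the one the paper uses) is to reduce to indecomposable $M,N$, observe that $\mathrm{ad}(z)$ then acts by the scalar $\dot z_N-\dot z_M$ on the whole Hom space, and note that $z$ annihilates every simple finite dimensional $\mathfrak{s}$-module; hence on the locally finite part this scalar is forced to be $0$, so either $\dot z_M=\dot z_N$ and $\mathrm{ad}(z)$ kills all of $\mathrm{Hom}_{\mathbb{C}}(M,N)$, or $\widetilde{\mathcal{L}}(M,N)=0$. Either way $\widetilde{\mathcal{L}}(M,N)={\mathcal{L}}(M,N)$, but the conclusion requires this extra input, not just centrality of $z$.

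Your finite type argument via \eqref{eq500} and finite dimensionality of Hom spaces in $\mathcal{O}$ is exactly the paper's argument and is fine. The finite generation paragraph, however, also rests on false intermediate claims: an object of $U\text{-}\mathrm{zlm}$ of finite type need not be noetherian (an infinite direct sum of pairwise non-isomorphic simple finite dimensional modules is of finite type but not finitely generated), and the adjoint module ${\mathcal{L}}(M,N)^{\mathrm{ad}}$ is in general \emph{not} finitely generated under the adjoint action --- by Proposition~\ref{prop327} and its proof, ${\mathcal{L}}(\Delta(\lambda),\Delta(\lambda))^{\mathrm{ad}}$ is an infinite direct sum of injective hulls, with simple subquotients of unbounded highest weight, so there is no ``support bound''. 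So the step ``finitely many generators for the adjoint action'' fails; finite generation as a bimodule must be argued differently (e.g.\ via a filtration of $M$ by quotients of Verma modules and the classical bimodule arguments of Jantzen/Dixmier). To be fair, the paper's own proof is silent on finite generation, but your specific justification of it does not work.
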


\begin{proof}
Each object in  $\mathcal{O}$ is finitely generated and hence decomposes into a finite direct sum of 
indecomposable objects. By additivity, it is enough to prove the claim for indecomposable $M$ and $N$.
Assume $M$ and $N$ are indecomposable. Since $z$ annihilates each simple finite dimensional $\mathfrak{s}$-module, 
for $\widetilde{\mathcal{L}}(M,N)$  to be nonzero $z$ should act with the same scalar on $M$ and $N$, in particular,
it follows that $z$ annihilates $\mathrm{Hom}_{\mathbb{C}}(M,N)$ and thus $\widetilde{\mathcal{L}}(M,N)$.
This implies $\widetilde{\mathcal{L}}(M,N)= {\mathcal{L}}(M,N)$. 

The claim that ${\mathcal{L}}(M,N)$ is a Harish-Chandra bimodule follows from \eqref{eq500} and the observation
that all homomorphism spaces in $\mathcal{O}$ are finite dimensional.
\end{proof}

For $M\in\mathcal{O}$ we thus get a canonical inclusion of $U\text{-}U$-bimodules.
\begin{equation}\label{eq325}
U/\mathrm{Ann}_U(M)\hookrightarrow  {\mathcal{L}}(M,M).
\end{equation}

\begin{lemma}\label{lem329}
Let $M$ be projective in $\mathcal{O}$. Then ${\mathcal{L}}(M,M)^{\mathrm{ad}}$ is injective in 
$U\text{-}\mathrm{zlm}$.
\end{lemma}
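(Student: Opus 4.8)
The plan is to exhibit ${\mathcal{L}}(M,M)^{\mathrm{ad}}$ as a finite direct summand of a manifestly injective object in $U\text{-}\mathrm{zlm}$, using the biadjointness of the projective functors $\mathrm{F}_V$. First I would reduce to the case of an indecomposable projective $P$ in $\mathcal{O}$: the functor ${\mathcal{L}}({}_-,{}_-)$ is additive in each variable and $U\text{-}\mathrm{zlm}$ is a Grothendieck category in which finite direct sums of injectives are injective, so it suffices to treat ${\mathcal{L}}(P,P)$ for $P$ an indecomposable projective. By Propositions~\ref{prop12}, \ref{lem3}, \ref{prop4}, \ref{prop5} and the discussion in Section~\ref{s3}, each indecomposable projective in $\mathcal{O}$ is obtained, up to a direct summand, by applying a projective functor $\mathrm{F}_V$ to a projective Verma module $\Delta(\mu)$; and again by additivity one reduces ${\mathcal{L}}(P,P)$ to a summand of a bimodule of the form ${\mathcal{L}}(\mathrm{F}_V\Delta(\mu),\mathrm{F}_W\Delta(\nu))$, hence (shuffling the finite-dimensional module across the tensor, using that $\mathrm{F}_V$ is biadjoint to itself) to a summand of ${\mathcal{L}}(\Delta(\mu),\mathrm{F}_{V}\Delta(\nu))$ for suitable finite-dimensional $V$.

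Next I would show that ${\mathcal{L}}(\Delta(\mu),N)^{\mathrm{ad}}$ is injective in $U\text{-}\mathrm{zlm}$ for \emph{any} $N\in\mathcal{O}$. By \eqref{eq500}, for every finite-dimensional $\mathfrak{s}$-module $V$ we have
\begin{displaymath}
\mathrm{Hom}_{U}(V,{\mathcal{L}}(\Delta(\mu),N)^{\mathrm{ad}})\cong\mathrm{Hom}_{U}(V\otimes\Delta(\mu),N)\cong\mathrm{Hom}_{U}(\Delta(\mu),V^{*}\otimes N).
\end{displaymath}
Since $V^{*}\otimes N\in\mathcal{O}$ and $\Delta(\mu)$ is projective in $\mathcal{O}$, the rightmost $\mathrm{Hom}$ is an exact functor of $N$, and since $V\mapsto V^{*}\otimes N$ is exact, it is an exact functor of $V$. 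Thus $V\mapsto\mathrm{Hom}_{U}(V,{\mathcal{L}}(\Delta(\mu),N)^{\mathrm{ad}})$ is exact on the category of finite-dimensional $\mathfrak{s}$-modules, which — because every object of $U\text{-}\mathrm{zlm}$ is a directed union of its finite-dimensional submodules and injectivity in a locally noetherian Grothendieck category is detected on the noetherian generators — is precisely the statement that ${\mathcal{L}}(\Delta(\mu),N)^{\mathrm{ad}}$ is injective in $U\text{-}\mathrm{zlm}$. (One checks $z$ acts by zero on ${\mathcal{L}}(\Delta(\mu),N)^{\mathrm{ad}}$ exactly as in the proof of Lemma~\ref{lem501}, so the object really lies in $U\text{-}\mathrm{zlm}$.)

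Finally I would assemble the pieces: applying the previous paragraph with $N=\mathrm{F}_{V}\Delta(\nu)$ shows ${\mathcal{L}}(\Delta(\mu),\mathrm{F}_{V}\Delta(\nu))^{\mathrm{ad}}$ is injective in $U\text{-}\mathrm{zlm}$, and then walking back up the reduction — each step either a finite direct sum or passage to a direct summand, both of which preserve injectivity in $U\text{-}\mathrm{zlm}$ — yields that ${\mathcal{L}}(M,M)^{\mathrm{ad}}$ is injective for an arbitrary projective $M\in\mathcal{O}$. The main obstacle I anticipate is the bookkeeping in the reduction: I must verify carefully that moving a finite-dimensional module through a tensor factor inside ${\mathcal{L}}({}_-,{}_-)$ — i.e. the isomorphism ${\mathcal{L}}(V\otimes M,N)\cong{\mathcal{L}}(M,V^{*}\otimes N)$ at the level of $U\text{-}U$-bimodules and hence of adjoint $\mathfrak{s}$-modules — is genuine and functorial, and that $\mathrm{F}_{V}\Delta(\nu)$ together with its summands really does cover the indecomposable projectives; once that identification is clean, the injectivity argument via \eqref{eq500} is essentially formal.
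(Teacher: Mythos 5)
Your second paragraph already contains the entire proof, and it is exactly the paper's argument: for \emph{any} projective $M$ in $\mathcal{O}$ (not just a projective Verma module) and with $N=M$, the isomorphism \eqref{eq500} identifies $\mathrm{Hom}_{U}(V,{\mathcal{L}}(M,M)^{\mathrm{ad}})$ with $\mathrm{Hom}_{U}(M,V^{*}\otimes M)$, which is exact in $V$ by exactness of ${}^*$ and of $\otimes$ together with projectivity of $M$; combined with Lemma~\ref{lem501} (which places the object in $U\text{-}\mathrm{zlm}$) and the locally noetherian structure of $U\text{-}\mathrm{zlm}$, this gives injectivity. The paper's proof is precisely these two lines. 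Consequently, the reduction in your first and third paragraphs --- passing to indecomposable projectives, realizing them as summands of $\mathrm{F}_V\Delta(\mu)$, and shuffling $V$ across ${\mathcal{L}}({}_-,{}_-)$ --- is superfluous, and it is also the only place where the write-up is shaky: you would need to note that a nonzero projective object of $\mathcal{O}$ necessarily lives in blocks of nonzero central charge (the zero central charge blocks have no nonzero projectives, cf.\ Subsection~\ref{s4.2}), and the bimodule-level isomorphism ${\mathcal{L}}(V\otimes M,N)\cong{\mathcal{L}}(M,V^{*}\otimes N)$ that you yourself flag as the ``main obstacle'' would indeed require a careful verification. Since none of that is needed, simply delete the reduction and run your second paragraph with $\Delta(\mu)$ replaced by $M$.
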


\begin{proof}
The functor $V\mapsto \mathrm{Hom}_{U}(V,\widetilde{\mathcal{L}}(M,M)^{\mathrm{ad}})$ is
exact by \eqref{eq500}, projectivity of $M$, exactness of $*$ and exactness of tensoring over a field.
The claim follows from this observation and Lemma~\ref{lem501}.
\end{proof}

\begin{corollary}\label{cor330}
Let $\lambda\in\mathfrak{h}^*$ be such that $\lambda(z)\neq 0$. Then
${\mathcal{L}}(\Delta(\lambda),\Delta(\lambda))^{\mathrm{ad}}$ is injective in 
$U\text{-}\mathrm{zlm}$.
\end{corollary}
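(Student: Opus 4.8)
The plan is to deduce the statement from Lemma~\ref{lem329} whenever $\Delta(\lambda)$ is projective in $\mathcal{O}$, and to treat the exceptional cases by hand using that a simple Verma module is self-dual. So the first step is to go through the classification of blocks of nonzero central charge and record when $\Delta(\lambda)$ is projective. By (the proofs of) Propositions~\ref{lem3} and~\ref{prop5}, if $\lambda(z)\neq0$ and $\lambda(h)\not\in\mathbb{Z}+\frac{1}{2}$, then $\Delta(\lambda)$ is (simple and) projective; and by (the proof of) Proposition~\ref{prop4}, if moreover $\lambda(h)\in\mathbb{Z}+\frac{1}{2}$, then $\Delta(\lambda)$ is projective precisely when $\lambda(h)\geq-\frac{3}{2}$. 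In all these cases the assertion of the corollary is exactly Lemma~\ref{lem329}. The only remaining case is $\lambda(z)\neq0$, $\lambda(h)\in\mathbb{Z}+\frac{1}{2}$ and $\lambda(h)<-\frac{3}{2}$; then $\Delta(\lambda)=L(\lambda)$ is simple by Proposition~\ref{prop4}\eqref{prop4.1}, so $\Delta(\lambda)=L(\lambda)\cong L(\lambda)^{\star}=\Delta(\lambda)^{\star}$ by Subsection~\ref{s3.8}. Thus we are reduced to the situation where $\Delta(\lambda)$ is simple and self-dual.

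In that case I would follow the proof of Lemma~\ref{lem329} as far as possible. By \eqref{eq500} together with Lemma~\ref{lem501}, for a finite-dimensional $\mathfrak{sl}_2$-module $V$ there is a natural isomorphism
\[
\mathrm{Hom}_{U}\big(V,{\mathcal{L}}(\Delta(\lambda),\Delta(\lambda))^{\mathrm{ad}}\big)\cong
\mathrm{Hom}_{\mathcal{O}}\big(\Delta(\lambda),V^{*}\otimes\Delta(\lambda)\big),
\]
and, exactly as in Lemma~\ref{lem329}, it suffices to show that the left-hand side, as a functor of $V$, is exact. Since $V\mapsto V^{*}\otimes\Delta(\lambda)$ is exact, everything comes down to showing that $\mathrm{Hom}_{\mathcal{O}}(\Delta(\lambda),{}_-)$ is exact on the modules $V^{*}\otimes\Delta(\lambda)$, equivalently that $\mathrm{Ext}^{1}_{\mathcal{O}}(\Delta(\lambda),V^{*}\otimes\Delta(\lambda))=0$. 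Here I would use that $\Delta(\lambda)\cong\Delta(\lambda)^{\star}$ admits the trivial filtration by modules of the form $\Delta(\mu)^{\star}$; dualising (via $\star$) the standard fact that tensoring with a finite-dimensional module preserves the class of modules admitting a Verma flag, one gets that $V^{*}\otimes\Delta(\lambda)$ also admits a filtration by modules of the form $\Delta(\mu)^{\star}$. Since the ambient block of $\mathcal{O}$ is a highest weight category (by Propositions~\ref{prop4} and~\ref{prop5} it is equivalent to a block of category $\mathcal{O}$ for $\mathfrak{sl}_2$, or to $\mathbb{C}\text{-}\mathrm{mod}$; cf. also \cite{CPS}), we have $\mathrm{Ext}^{1}_{\mathcal{O}}(\Delta(\mu),\Delta(\nu)^{\star})=0$ for all $\mu,\nu$, whence $\mathrm{Ext}^{1}_{\mathcal{O}}(\Delta(\lambda),V^{*}\otimes\Delta(\lambda))=0$. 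This yields the required exactness and finishes the proof.

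The point that needs care — the one respect in which this differs from Lemma~\ref{lem329} — is that here $\Delta(\lambda)$ need not be projective, so $\mathrm{Hom}_{\mathcal{O}}(\Delta(\lambda),{}_-)$ is not exact on all of $\mathcal{O}$; it is exact only on modules admitting a filtration by dual Verma modules. The argument works because in the single case not already covered by Lemma~\ref{lem329} the module $\Delta(\lambda)$ is simple, hence self-dual, hence lies in that subcategory together with all its images under projective functors. Equivalently, one can present the proof uniformly by observing that for $\lambda(z)\neq0$ the Verma module $\Delta(\lambda)$ is always either projective or a tilting module in $\mathcal{O}$, and that $V^{*}\otimes\Delta(\lambda)$ inherits the corresponding property in each case.
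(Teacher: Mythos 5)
Your proposal is correct, but in the one case not already covered by Lemma~\ref{lem329} it takes a genuinely different route from the paper. Both arguments first dispose of the projective case via Lemma~\ref{lem329} and reduce to $\lambda(h)\in\mathbb{Z}+\tfrac{1}{2}$ with $\lambda(h)<-\tfrac{3}{2}$. The paper then works with the non-split sequence $0\to\Delta(\lambda)\to\Delta(r\cdot\lambda)\to L(r\cdot\lambda)\to 0$ of Proposition~\ref{prop4}: a Gelfand--Kirillov dimension argument gives $\mathcal{L}(\Delta(\lambda),L(r\cdot\lambda))=\mathcal{L}(L(r\cdot\lambda),\Delta(r\cdot\lambda))=0$, so that $\mathcal{L}(\Delta(\lambda),\Delta(\lambda))\cong\mathcal{L}(\Delta(\lambda),\Delta(r\cdot\lambda))$ contains the injective bimodule $\mathcal{L}(\Delta(r\cdot\lambda),\Delta(r\cdot\lambda))$ as a direct summand, and a count of $\dim\mathrm{Hom}_U(\Delta,V\otimes\Delta)$ via \eqref{eq500} and Proposition~\ref{prop4}\eqref{prop4.5} shows the inclusion is an equality. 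You instead exploit that $\Delta(\lambda)=L(\lambda)$ is self-dual under $\star$, so $V^{*}\otimes\Delta(\lambda)$ carries a finite filtration by dual Verma modules, and conclude from $\mathrm{Ext}^1_{\mathcal{O}}(\Delta(\mu),\Delta(\nu)^{\star})=0$, which can be read off the explicit block descriptions in Propositions~\ref{prop4} and~\ref{prop5}. This is the standard ``$\Delta$ against $\nabla$-flag'' argument and it avoids both the Gelfand--Kirillov estimate and the final multiplicity count; the paper's route has the side benefit of explicitly identifying $\mathcal{L}(\Delta(\lambda),\Delta(\lambda))$ with $\mathcal{L}(\Delta(r\cdot\lambda),\Delta(r\cdot\lambda))$, which is reused in the dimension bookkeeping of Proposition~\ref{prop327}. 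The only points you should spell out are the compatibility $(V\otimes M)^{\star}\cong V\otimes M^{\star}$ for finite dimensional $V$ (needed to produce the dual Verma flag) and the observation that the pullback extension witnessing the lifting obstruction stays inside $\mathcal{O}$, so that vanishing of Yoneda $\mathrm{Ext}^1_{\mathcal{O}}$ really does give the required surjectivity; both are routine.
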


\begin{proof}
If $\Delta(\lambda)$ is projective, the claim follows from Lemma~\ref{lem329}. If $\Delta(\lambda)$ is not
projective, then we are in the situation described in Proposition~\ref{prop4}. In particular, we have
a short exact sequence
\begin{equation}\label{eq334}
0\to \Delta(\lambda)\to \Delta(r\cdot \lambda)\to L(r\cdot \lambda)\to 0.
\end{equation}
Using \eqref{eq500}, the fact that Gelfand-Kirillov dimension of $L(r\cdot \lambda)$ is strictly smaller than that
of $\Delta(\lambda)$ and the fact that projective functors do not affect Gelfand-Kirillov dimension, 
we get 
\begin{equation}\label{eq335}
\mathcal{L}(\Delta(\lambda),L(r\cdot \lambda))=\mathcal{L}(L(r\cdot \lambda),\Delta(r\cdot \lambda))=0.
\end{equation}
Applying the left exact functor $\mathcal{L}(\Delta(\lambda),{}_-)$ to \eqref{eq334} and using \eqref{eq335} we get
\begin{displaymath}
\mathcal{L}(\Delta(\lambda),\Delta(\lambda)) \cong
\mathcal{L}(\Delta(\lambda),\Delta(r\cdot \lambda)). 
\end{displaymath}
Applying the left exact functor $\mathcal{L}({}_-,\Delta(r\cdot \lambda))$ to \eqref{eq334} and using 
\eqref{eq335} we thus get a natural inclusion
\begin{displaymath}
\mathcal{L}(\Delta(r\cdot \lambda),\Delta(r\cdot \lambda)) \subset 
\mathcal{L}(\Delta(\lambda),\Delta(r\cdot \lambda))\cong \mathcal{L}(\Delta(\lambda),\Delta(\lambda)). 
\end{displaymath}
As $\Delta(r\cdot \lambda)$ is projective, $\mathcal{L}(\Delta(r\cdot \lambda),\Delta(r\cdot \lambda))^{\mathrm{ad}}$
is injective by Lemma~\ref{lem329} and hence splits as a direct summand inside 
$\mathcal{L}(\Delta(\lambda),\Delta(\lambda))^{\mathrm{ad}}$. To complete the proof it is therefore enough to use  
\eqref{eq500} and check that 
\begin{displaymath}
\dim \mathrm{Hom}_{U}(\Delta(\lambda),V\otimes \Delta(\lambda)) =
\dim \mathrm{Hom}_{U}(\Delta(r\cdot \lambda),V\otimes \Delta(r\cdot \lambda))
\end{displaymath}
for any simple finite dimensional $\mathfrak{sl}_2$-module $V$. This is a straightforward computation using 
Proposition~\ref{prop4}\eqref{prop4.5}.
\end{proof}

\subsection{The bimodules ${\mathcal{L}}(\Delta(\lambda),\Delta(\lambda))$ for nonzero central charge}\label{s6.3}

\begin{proposition}\label{prop327}
Let $\lambda\in\mathfrak{h}^*$ be such that $\lambda(z)\neq 0$. Then 
the canonical inclusion \eqref{eq325} for $M=\Delta(\lambda)$ is an isomorphism.
\end{proposition}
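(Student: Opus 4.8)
The plan is to show that the canonical inclusion $U/\mathrm{Ann}_U(\Delta(\lambda)) \hookrightarrow \mathcal{L}(\Delta(\lambda),\Delta(\lambda))$ of \eqref{eq325} is surjective. By Corollary~\ref{cor330} the bimodule $\mathcal{L}(\Delta(\lambda),\Delta(\lambda))^{\mathrm{ad}}$ is injective in $U\text{-}\mathrm{zlm}$, and by the locally-noetherian Grothendieck theory recalled in Subsection~\ref{s6.1} it decomposes as a (finite, by finite type) direct sum of indecomposable injectives $I^f(n)$. By Lemma~\ref{lem753} each $I^f(n)$ is $I^f(0)\otimes\mathrm{soc}(I^f(n))$, so the whole bimodule is determined up to isomorphism by the multiplicities $\dim\mathrm{Hom}_U(V,\mathcal{L}(\Delta(\lambda),\Delta(\lambda))^{\mathrm{ad}})$ over simple finite-dimensional $V$. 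Thus the strategy is a dimension count: compute these multiplicities on both sides and observe they agree, whence the inclusion \eqref{eq325} is an isomorphism.

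First I would compute the left-hand side. Since $\Delta(\lambda)$ is a Verma module, it is free of rank one over $U(\mathfrak{n}_-)$, and Theorem~\ref{thm401} tells us $\mathrm{Ann}_U(\Delta(\lambda))=U\mathbf{m}_\lambda$ is centrally generated by $z-\lambda(z)$ and $\mathtt{c}-\vartheta_\lambda$. Using Proposition~\ref{prop25}\eqref{prop25.1} (with $\dot z=\lambda(z)$) the quotient $U_{\dot z}=U/U(z-\dot z)$ is free over $\mathbb{C}[\mathtt{c}]$, and passing to the further quotient by $\mathtt{c}-\vartheta_\lambda$ exhibits $U/\mathrm{Ann}_U(\Delta(\lambda))$ with an explicit PBW-type basis — along the lines of the basis $t^i q^l h^j p^k$ (or $t^i f^l h^j p^k$ in the $\dot z=0$ case) appearing in the proof of Theorem~\ref{thm401}, except that here $\dot z\neq 0$. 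I would then read off the character of $U/\mathrm{Ann}_U(\Delta(\lambda))$ under the adjoint action of $\mathfrak{h}$, hence the multiplicity of each finite-dimensional $\mathfrak{sl}_2$-module in $\big(U/\mathrm{Ann}_U(\Delta(\lambda))\big)^{\mathrm{ad}}$.

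For the right-hand side, I would use the adjunction \eqref{eq500}: $\mathrm{Hom}_U(V,\mathcal{L}(\Delta(\lambda),\Delta(\lambda))^{\mathrm{ad}})\cong\mathrm{Hom}_U(\Delta(\lambda),V\otimes\Delta(\lambda))$ for finite-dimensional $\mathfrak{sl}_2$-modules $V$. By Proposition~\ref{prop721} (the tensor-product realization $\mathbf{M}\otimes{}_-:\mathcal{O}^{(\mathfrak{sl}_2)}\xrightarrow{\sim}\mathcal{O}[\dot z]$, which sends Verma modules to Verma modules and commutes with projective functors), this Hom-space equals $\mathrm{Hom}_{\mathcal{O}^{(\mathfrak{sl}_2)}}(\Delta_{\mathfrak{sl}_2}(\mu),V\otimes\Delta_{\mathfrak{sl}_2}(\mu))$ for the corresponding $\mathfrak{sl}_2$-weight $\mu$. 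That is a classical computation: tensoring a Verma module with $V$ produces a module with a Verma flag whose subquotients are $\Delta_{\mathfrak{sl}_2}(\mu+\nu)$, $\nu$ a weight of $V$, and one counts homomorphisms from $\Delta_{\mathfrak{sl}_2}(\mu)$ into it using that nonzero maps between Verma modules are injective and $\mathrm{End}\cong\mathbb{C}$. Comparing the resulting generating function in the variable recording the $h$-weight with the one obtained for $U/\mathrm{Ann}_U(\Delta(\lambda))$ in the previous paragraph gives the desired equality of multiplicities. Finally, since the inclusion \eqref{eq325} is a morphism of the ambient bimodules whose underlying adjoint $\mathfrak{s}$-modules have equal (finite, in each isotypic component) composition data, injectivity of the target and the uniqueness of the injective decomposition force \eqref{eq325} to be an isomorphism.

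The main obstacle I expect is the bookkeeping on the left-hand side: extracting the precise $\mathfrak{h}$-character of $U/\mathrm{Ann}_U(\Delta(\lambda))$ — i.e.\ controlling exactly which monomials survive after killing $z-\dot z$ and $\mathtt{c}-\vartheta_\lambda$ — and matching it termwise against the Clebsch–Gordan count on the right. One must also be slightly careful that the $\mathfrak{sl}_2$-weight $\mu$ coming out of Proposition~\ref{prop721} can be either generic or half-integral, so the $\mathfrak{sl}_2$-side Hom-computation splits into the cases of Proposition~\ref{lem3} and Proposition~\ref{prop4}; in the latter (non-projective Verma) case one should cross-check consistency with the computation already performed at the end of the proof of Corollary~\ref{cor330}.
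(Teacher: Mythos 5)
Your skeleton coincides with the paper's: compare both sides of \eqref{eq325} inside $U\text{-}\mathrm{zlm}$ using Corollary~\ref{cor330}, Lemma~\ref{lem753} and the adjunction \eqref{eq500}, and your computation of the right-hand side, $\dim\mathrm{Hom}_{U}(V,{\mathcal{L}}(\Delta(\lambda),\Delta(\lambda))^{\mathrm{ad}})=\dim V_0$ via the Verma flag of $V\otimes\Delta(\lambda)$ and the block description of Section~\ref{s3}, is exactly what the paper does. (A minor slip: the injective decomposition is not finite --- infinitely many non-isomorphic odd-dimensional socle constituents occur; finite type only gives finiteness of each socle multiplicity.)

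The genuine gap is on the left-hand side. Reading off the $\mathfrak{h}$-character of $U/\mathrm{Ann}_U(\Delta(\lambda))$ from a PBW basis cannot close the argument. First, the adjoint weight spaces are infinite dimensional (the zero weight space already contains all $h^b$ and all $q^dp^d$), and the composition multiplicity of each simple $V$ in either side is infinite, so your claim of ``finite, in each isotypic component, composition data'' is false and there is no finite generating function to compare. Second, and more fundamentally, $U\text{-}\mathrm{zlm}$ is far from semisimple (Theorem~\ref{thm301}), so even correct composition multiplicities would determine neither the isomorphism class of the adjoint module nor the quantities $\dim\mathrm{Hom}_U(V,{}_-)$ (socle multiplicities) that the injective-decomposition argument actually needs; an embedding of modules with equal characters need not be onto. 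What is required --- and what the paper supplies --- is an identification of $(U/U\mathbf{m}_{\lambda})^{\mathrm{ad}}$ itself as a direct sum of the injectives $I^f(n)$: since $\lambda(z)\neq 0$ one has $\mathtt{c}=\lambda(z)\underline{\mathtt{c}}+(\text{lower order in } U(\mathfrak{sl}_2))$, giving a decomposition $U/U\mathbf{m}_{\lambda}\cong U(\mathfrak{i})/(z-\lambda(z))\otimes U(\mathfrak{sl}_2)/(\underline{\mathtt{c}}-\text{scalar})$ compatible with the adjoint action; the adjoint module of the first (Weyl algebra) factor is the injective hull $I^f(0)$ of the trivial module, that of the second is the multiplicity-free sum of all odd-dimensional simples, and Lemma~\ref{lem753} then identifies the left-hand side with the same multiplicity-free sum of $I^f(2n)$'s as the right-hand side, whereupon the split inclusion has cokernel with zero socle and hence vanishes. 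Without this step (or some other argument establishing that the source of \eqref{eq325} is injective in $U\text{-}\mathrm{zlm}$ with the correct socle), the dimension count does not prove surjectivity.
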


\begin{proof}
We only have to prove surjectivity. Let $V$ be a simple finite dimensional $\mathfrak{sl}_2$-module
of dimension $n$. Then $V\otimes \Delta(\lambda)$ has a Verma filtration with subquotients
\begin{displaymath}
\Delta(\lambda+(n-1)h^{\checkmark}),\Delta(\lambda+(n-3)h^{\checkmark}),
\Delta(\lambda+(n-5)h^{\checkmark}),\dots, \Delta(\lambda-(n-1)h^{\checkmark}),
\end{displaymath}
each occurring with multiplicity one. From our explicit description of blocks with nonzero central charge
in Section~\ref{s3} it follows that if $n$ is even, then there are no homomorphisms from $\Delta(\lambda)$
to any of these subquotients. Hence 
$\mathrm{Hom}_{U}(V,{\mathcal{L}}(M,M)^{\mathrm{ad}})=0$ by \eqref{eq500}.

If $n$ is odd, we have two possibilities. The first one is that $\Delta(\lambda)$ is the only Verma module
from the block which appears as a subquotient in the above list. In this case we obviously get 
$\mathrm{Hom}_{U}(V,{\mathcal{L}}(M,M)^{\mathrm{ad}})=1$ by \eqref{eq500}.
The second case is that the other Verma module from the same block as $\Delta(\lambda)$ also appears in the
above list. In this case one checks that the projection of $V\otimes \Delta(\lambda)$ is the indecomposable
projective cover of a simple Verma module in the block (cf \cite[Chapter~5]{Ma}) and hence again 
$\mathrm{Hom}_{U}(V,{\mathcal{L}}(M,M)^{\mathrm{ad}})=1$ by \eqref{eq500}. Altogether we get
\begin{equation}\label{eq505}
\dim\mathrm{Hom}_{U}(V,{\mathcal{L}}(M,M)^{\mathrm{ad}})=\dim V_0.
\end{equation}
This and Corollary~\ref{cor330} together imply that ${\mathcal{L}}(M,M)^{\mathrm{ad}}$ is a multiplicity free 
direct sum of injective envelopes (in $U\text{-}\mathrm{zlm}$) of all odd-dimensional simple 
$U$-modules.

Now let us estimate $U/\mathrm{Ann}_U(M)$. We know that $\mathrm{Ann}_U(M)=U\mathbf{m}_{\lambda}$
by Theorem~\ref{thm401}. The algebra $U(\mathfrak{i})$ acts on $M$ via the simple quotient
$U(\mathfrak{i})/(z-\lambda(z))$ which is isomorphic to the first Weyl algebra.
It is straightforward to check, using computation and results from Subsection~\ref{s4.4}, that
$U(\mathfrak{i})/(z-\lambda(z))^{\mathrm{ad}}$ is isomorphic to the injective hull of the trivial
module. Since $\lambda(z)\neq 0$, we have  $\mathtt{c}=\lambda(z)\underline{\mathtt{c}}+x$
where $x$ of lower $U(\mathfrak{sl}_2)$-degree. Therefore we may use the PBW theorem to produce
a vector space decomposition
\begin{displaymath}
U/\mathrm{Ann}_U(M)\cong
U(\mathfrak{i})/(z-\lambda(z))\otimes U(\mathfrak{sl}_2)/(\mathtt{c})
\end{displaymath}
compatible with the adjoint action. The adjoint module $\left[U(\mathfrak{sl}_2)/(\mathtt{c})\right]^{\mathrm{ad}}$ 
is a multiplicity free direct sum of all simple odd-dimensional modules. From Lemma~\ref{lem753}
we thus get that $U/\mathrm{Ann}_U(M)$ is a multiplicity free 
direct sum of injective envelopes (in $U\text{-}\mathrm{zlm}$) of all odd-dimensional simple 
$U$-modules. The claim follows.
\end{proof}

\subsection{Primitive ideals for nonzero central charge}\label{s6.4}

The following statement describes all primitive ideals for $U$ with nonzero central charge.

\begin{theorem}\label{thm777}
Let $\lambda\in\mathfrak{h}^*$ be such that $\lambda(z)\neq 0$ and set $U_{\lambda}:=U/U\mathbf{m}_{\lambda}$.
\begin{enumerate}[$($i$)$]
\item\label{thm777.1} If $\lambda(h)\not\in\frac{1}{2}+\mathbb{Z}$ or $\lambda(h)=-\frac{3}{2}$, 
then $U_{\lambda}$ is a simple algebra.
\item\label{thm777.2} If $\lambda(h)\in\{-\frac{1}{2},\frac{1}{2},\frac{3}{2},\frac{5}{2},\dots \}$,
then $U_{\lambda}$ has two primitive ideals, namely $0$ and $\mathrm{Ann}_{U_{\lambda}}(L(\lambda))$.
\end{enumerate}
\end{theorem}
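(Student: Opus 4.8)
The plan is to identify $U_\lambda$ explicitly as a tensor product of the first Weyl algebra with a central quotient of $U(\mathfrak{sl}_2)$, and then read off primitive ideals from the classical $\mathfrak{sl}_2$ picture. By Theorem~\ref{thm401} and the description of $\mathbf{m}_\lambda$, we have $U_\lambda\cong A_{\dot z}/(\mathtt{c}-\vartheta_\lambda)$, where $\dot z=\lambda(z)\neq 0$. Set $\tilde e:=e-\tfrac{p^2}{2\dot z}$, $\tilde f:=f+\tfrac{q^2}{2\dot z}$, $\tilde h:=h+\tfrac{qp}{\dot z}+\tfrac12$ in $A_{\dot z}$ (these are the elements $x-\Phi(x)$ for the homomorphism $\Phi$ of Subsection~\ref{s3.9}). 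A direct computation shows that $\tilde e,\tilde h,\tilde f$ commute with $p$ and $q$ and span a copy of $\mathfrak{sl}_2$; hence the inclusion $B_{\dot z}=U(\mathfrak{i})/(z-\dot z)\hookrightarrow A_{\dot z}$ together with $x\mapsto\tilde x$ induces an algebra homomorphism $B_{\dot z}\otimes U(\mathfrak{sl}_2)\to A_{\dot z}$, which is an isomorphism by the PBW theorem. Since $\mathtt{c}$ is central in $A_{\dot z}$, under this isomorphism it lies in $1\otimes Z(U(\mathfrak{sl}_2))=\mathbb{C}[1\otimes\underline{\mathtt{c}}]$; comparing its action on Verma modules (using $\mathtt{c}\cdot v_\mu=(\mu(h)^2+3\mu(h)+2)\mu(z)v_\mu$ from the proof of Lemma~\ref{lem2}, and the fact that the Casimir $\underline{\mathtt{c}}$ acts by $(\mu(h)+\tfrac32)^2$ on the corresponding $\mathfrak{sl}_2$-Verma) pins this down to $\mathtt{c}=\dot z\,\underline{\mathtt{c}}-\tfrac{\dot z}{4}$. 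Writing $\chi:=(\lambda(h)+\tfrac32)^2$ and $C_\chi:=U(\mathfrak{sl}_2)/(\underline{\mathtt{c}}-\chi)$, and using that $\dot z$ is a unit, we conclude $U_\lambda\cong B_{\dot z}\otimes C_\chi$.

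Now I invoke two standard facts. First, by Dixmier's analysis of $\mathfrak{sl}_2$ (see \cite{Di}), $C_\chi$ is a simple algebra unless $\chi=(n+1)^2$ for some $n\in\mathbb{Z}_+$, in which case $C_\chi$ has exactly two primitive ideals, namely $0$ and $\mathfrak{p}_n:=\mathrm{Ann}_{C_\chi}L^{\mathfrak{sl}_2}(n)$, and $C_\chi/\mathfrak{p}_n$ is the finite-dimensional matrix algebra $\mathrm{End}_{\mathbb{C}}(L^{\mathfrak{sl}_2}(n))$. Second, $B_{\dot z}$ is a simple Noetherian $\mathbb{C}$-algebra with centre $\mathbb{C}$; therefore the assignment $I\mapsto B_{\dot z}\otimes I$ is a bijection between the two-sided ideals of an arbitrary $\mathbb{C}$-algebra $R$ and those of $B_{\dot z}\otimes R$, and it restricts to a bijection between primitive ideals (if $N$ is a faithful simple $R/I$-module then, since $\mathrm{End}(N)=\mathbb{C}$, the module $\mathbf{M}\mapsto$ $\mathbf{N}:=\mathbf{M}\otimes_{B_{\dot z}}$-type argument gives that, in fact, any simple $B_{\dot z}$-module tensored with $N$ is a faithful simple $B_{\dot z}\otimes(R/I)$-module, and conversely).

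It remains to combine these. One has $\chi=(n+1)^2$ for some $n\in\mathbb{Z}_+$ precisely when $|\lambda(h)+\tfrac32|\in\mathbb{N}$, i.e. when $\lambda(h)\in\{-\tfrac12,\tfrac12,\tfrac32,\dots\}\cup\{-\tfrac52,-\tfrac72,\dots\}$. Its complement among $\lambda$ with $\lambda(z)\neq 0$ is exactly the case $\lambda(h)\notin\tfrac12+\mathbb{Z}$ or $\lambda(h)=-\tfrac32$, where $C_\chi$, and hence $U_\lambda\cong B_{\dot z}\otimes C_\chi$, is simple; this proves \eqref{thm777.1}. In case \eqref{thm777.2} we have $n:=\lambda(h)+\tfrac12\in\mathbb{Z}_+$ and $\chi=(n+1)^2$, so $U_\lambda\cong B_{\dot z}\otimes C_\chi$ has exactly two primitive ideals, $0$ and $B_{\dot z}\otimes\mathfrak{p}_n$. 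To recognise the second one, observe that under the isomorphism $A_{\dot z}\cong B_{\dot z}\otimes U(\mathfrak{sl}_2)$ one checks, using $[f,q]=0$ and that $f$ and $\tilde f$ differ by the element $q^2/2\dot z$ (central in $B_{\dot z}$), that $\Delta(\lambda)$ corresponds to $\mathbf{M}\otimes_{\mathbb{C}}\Delta^{\mathfrak{sl}_2}(n)$ with the obvious action, whence $L(\lambda)\cong\mathbf{M}\otimes_{\mathbb{C}}L^{\mathfrak{sl}_2}(n)$; since $\mathbf{M}$ is a faithful simple $B_{\dot z}$-module, $\mathrm{Ann}_{U_\lambda}L(\lambda)=B_{\dot z}\otimes\mathfrak{p}_n$, the nonzero primitive ideal. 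The main technical obstacle is the first paragraph: establishing the algebra isomorphism $A_{\dot z}\cong B_{\dot z}\otimes U(\mathfrak{sl}_2)$ and pinning down the image of $\mathtt{c}$; once this is in place the rest is bookkeeping with well-known facts about the Weyl algebra and about $U(\mathfrak{sl}_2)$.
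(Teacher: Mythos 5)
Your argument is correct, but it is a genuinely different route from the one taken in the paper. The paper never writes down an algebra isomorphism $A_{\dot{z}}\cong B_{\dot{z}}\otimes U(\mathfrak{sl}_2)$; instead it works on the level of Harish--Chandra bimodules: it uses Theorem~\ref{thm401} and Proposition~\ref{prop327} to identify $U_{\lambda}$ with $\mathcal{L}(\Delta(\lambda),\Delta(\lambda))$, computes the adjoint structure of these bimodules via Corollary~\ref{cor330}, Lemma~\ref{lem753} and the uniseriality of $I^f(0)$ from Theorem~\ref{thm301}, and then deduces simplicity (resp.\ length two) of $\mathcal{L}(M\otimes\mathbf{M},M\otimes\mathbf{M})$ from \cite[Theorem~7]{LZ} and \cite[Theorem~4.15]{Ma}. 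You instead decouple the algebra itself: the elements $\tilde{x}=x-\Phi(x)$ do commute with $p$ and $q$ and do satisfy the $\mathfrak{sl}_2$ relations (I checked the brackets; e.g.\ $[\tilde{e},\tilde{f}]=[\tilde{e},f]=h-\frac{1}{2\dot{z}}[p^2,f]=h+\frac{qp}{\dot z}+\frac12=\tilde{h}$), PBW gives $A_{\dot{z}}\cong B_{\dot{z}}\otimes U(\mathfrak{sl}_2)$, and the centrality of $\mathtt{c}$ together with its eigenvalues on Verma modules (and the fact that the intersection of their annihilators is zero, Proposition~\ref{prop21}) pins down $\mathtt{c}=\dot{z}\,\underline{\tilde{\mathtt{c}}}-\frac{\dot{z}}{4}$, where $\underline{\tilde{\mathtt{c}}}$ is the Casimir of the \emph{new} copy of $\mathfrak{sl}_2$ (you should say this explicitly; the paper's $\underline{\mathtt{c}}$ denotes the Casimir of the original copy). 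After that, Dixmier's description of the primitive quotients of $U(\mathfrak{sl}_2)$ and the standard bijection $I\mapsto B_{\dot{z}}\otimes I$ between two-sided ideals of $R$ and of $B_{\dot{z}}\otimes R$ (valid because $B_{\dot{z}}$ is central simple; this needs a reference, e.g.\ \cite[4.5.1]{Di}) finish the proof. Your approach buys a cleaner and more self-contained argument which makes the ideal lattice of $U_{\lambda}$ completely explicit and does not need the bimodule machinery of Section~\ref{s6}; the paper's approach buys the finer information about the adjoint $\mathfrak{s}$-module structure of $\mathcal{L}(M,M)$ (the injective-hull decompositions), which it wants anyway for Proposition~\ref{prop327}.

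Two small repairs. First, the parenthetical claim that $q^2/2\dot{z}$ is ``central in $B_{\dot{z}}$'' is false ($q^2$ does not commute with $p$ in the Weyl algebra); what you need, and what is true, is only that $f-\tilde{f}$ lies in $B_{\dot{z}}\otimes 1$, so that the identification $\Delta(\lambda)\cong\mathbf{M}\otimes\Delta^{\mathfrak{sl}_2}(n)$ with $n=\lambda(h)+\frac12$ holds; alternatively just quote Proposition~\ref{prop721} (or \cite[Theorem~2]{LMZ1}). Second, the appeal to Theorem~\ref{thm401} in the first line is unnecessary: $U_{\lambda}=U/U\mathbf{m}_{\lambda}\cong A_{\dot{z}}/(\mathtt{c}-\vartheta_{\lambda})$ holds by the definition of $\mathbf{m}_{\lambda}$, since both generators are central. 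Neither point affects the validity of the proof.
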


We note that for $\lambda\in \{-\frac{5}{2},-\frac{7}{2},-\frac{9}{2},\dots \}$ we have 
$U_{\lambda}=U_{r\cdot \lambda}$ and hence this case reduces to Theorem~\ref{thm777}\eqref{thm777.2}.

\begin{proof}
Set $\dot{z}:=\lambda(z)\neq 0$ and consider the associative algebra $B_{\dot{z}}=U(\mathfrak{i})/(z-\dot{z})$
as in Subsection~\ref{s3.9} (which is isomorphic to the first Weyl algebra, in particular, it is a simple algebra).
Consider the simple $B_{\dot{z}}$-module $\mathbf{M}:=B_{\dot{z}}/B_{\dot{z}}p$ which, following
Subsection~\ref{s3.9}, can be regarded as the simple highest weight module
$L(\mu)$ where $\mu(z)=\dot{z}$ and $\mu(h)=-\frac{1}{2}$. From \cite[Theorem~1]{LMZ1} it thus follows that 
$U/\mathrm{Ann}_U(L(\mu))=B_{\dot{z}}$ is a simple algebra.

If $M$ is a simple $\mathfrak{sl}_2$-module, then $M\otimes \mathbf{M}$ is a simple $U$-module by \cite[Theorem~3]{LMZ1}.
If $M$ is simple finite dimensional but not one-dimensional, then clearly 
$M\otimes \mathbf{M}$ has a different highest weight
than $\mathbf{M}$ and thus $M\otimes \mathbf{M}\not\cong \mathbf{M}$. 
Hence \eqref{eq500} implies that we have the inclusion
$\mathcal{L}(L(\mu),L(\mu))^{\mathrm{ad}}\subset I^f(0)$. From Theorem~\ref{thm301}
we get that $I^f(0)$ is a uniserial module. This implies that each proper submodule of $I^f(0)$ is finite dimensional.
We know that $\mathcal{L}(L(\mu),L(\mu))^{\mathrm{ad}}$ contains $U/\mathrm{Ann}_U(L(\mu))$ which is 
infinite dimensional. This means that $\mathcal{L}(L(\mu),L(\mu))=U/\mathrm{Ann}_U(L(\mu))$. 
Since the latter is a simple algebra, the $U\text{-}U$-bimodule $\mathcal{L}(L(\mu),L(\mu))$ is simple.

If $M$ is a Verma $\mathfrak{sl}_2$-module with a non-integral highest weight, then $M$ is simple and
$M\otimes \mathbf{M}$ is a simple 
Verma $U$-module, say $\Delta(\nu)$, moreover,  $\nu(h)\not\in\frac{1}{2}+\mathbb{Z}$.
By \cite[7.25]{Ja} we have $\mathcal{L}(M,M)\cong U(\mathfrak{sl}_2)/\mathrm{Ann}_{\mathfrak{sl}_2}(M)$
and the latter is a simple $U\text{-}U$-bimodule by \cite[Theorem~4.15(iv)]{Ma}. 
Applying \cite[Theorem~7]{LZ} it thus follows that 
$\mathcal{L}(M,M)\otimes \mathcal{L}(\mathbf{M},\mathbf{M})$ is a simple $U\text{-}U$-bimodule. Note that
\begin{displaymath}
\mathcal{L}(M,M)\otimes \mathcal{L}(\mathbf{M},\mathbf{M})\subset \mathcal{L}(M\otimes \mathbf{M},M\otimes \mathbf{M}).
\end{displaymath}
The module $\mathcal{L}(M,M)^{\mathrm{ad}}$ is the multiplicity-free
direct sum of all simple finite dimensional $\mathfrak{sl}_2$-modules of odd dimension (this follows
from \eqref{eq505}). From Lemma~\ref{lem753} we thus get
\begin{displaymath}
[\mathcal{L}(M,M)\otimes \mathcal{L}(\mathbf{M},\mathbf{M})]^{\mathrm{ad}}\cong
I^f(0)\oplus I^f(2)\oplus I^f(4)\oplus\dots.
\end{displaymath}
Comparing with the proof of Proposition~\ref{prop327}, we get 
\begin{displaymath}
\mathcal{L}(M,M)\otimes \mathcal{L}(\mathbf{M},\mathbf{M})\cong \mathcal{L}(M\otimes \mathbf{M},M\otimes \mathbf{M}). 
\end{displaymath}
In particular, $\mathcal{L}(M\otimes \mathbf{M},M\otimes \mathbf{M})$ is a simple $U\text{-}U$-bimodule.
As $U_{\lambda}$ is a $U\text{-}U$-subbimodule of $\mathcal{L}(M\otimes \mathbf{M},M\otimes \mathbf{M})$ 
by Theorem~\ref{thm401},
claim~\eqref{thm777.1} follows for  $\lambda(h)\not\in\frac{1}{2}+\mathbb{Z}$. Similar arguments
apply in the case $\lambda(h)=-\frac{3}{2}$.

If $M$ is a Verma $\mathfrak{sl}_2$-module with integral non-negative highest weight, say $k$, then the
$U\text{-}U$-bimodule $\mathcal{L}(M,M)$ has length two by \cite[Theorem~4.15(v)]{Ma}. As tensoring with
$\mathcal{L}(\mathbf{M},\mathbf{M})$ over a field is exact, from \cite[Theorem~7]{LZ} we get that the $U\text{-}U$-bimodule 
$\mathcal{L}(M,M)\otimes \mathcal{L}(\mathbf{M},\mathbf{M})$ has length two. Similarly to the previous paragraph one
shows that 
\begin{displaymath}
\mathcal{L}(M,M)\otimes \mathcal{L}(\mathbf{M},\mathbf{M})\cong \mathcal{L}(M\otimes \mathbf{M},M\otimes \mathbf{M}). 
\end{displaymath}
Hence $U_{\lambda}$ has one proper ideal, call it $J$. Let $\nu$ be such that 
$\nu(z)=\dot{z}$, $\nu(h)=\mu(h)+k$. From the above, $L(\nu)$ is the tensor product of $\mathbf{M}$ with the
$k+1$-dimensional simple $\mathfrak{sl}_2$-module and hence
\begin{displaymath}
\mathcal{L}(L(\nu),L(\nu)) ^{\mathrm{ad}}\cong I^f(k),
\end{displaymath}
in particular, the annihilator of $L(\nu)$ must be different from (in fact, strictly bigger than) 
the annihilator of $\Delta(\nu)$. Therefore $J=\mathrm{Ann}_{U_{\lambda}}L(\nu)$ is primitive.
This completes the proof.
\end{proof}

As an immediate consequence we get:

\begin{corollary}\label{cor764}
Primitive ideals in $U$ with nonzero central charge are exactly the annihilators of simple
highest weight modules with nonzero central charge.
\end{corollary}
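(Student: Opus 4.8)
The plan is to derive everything from Theorem~\ref{thm777} by reducing an arbitrary primitive ideal of nonzero central charge to one of the algebras $U_\lambda=U/U\mathbf{m}_\lambda$. By definition a primitive ideal $P\subset U$ is the annihilator of a simple module $S$, and, as recalled in Subsection~\ref{s2.5}, both $z$ and $\mathtt{c}$ act on $S$ by scalars, say $\dot z$ and $\theta$; that $P$ has nonzero central charge means precisely $\dot z\neq 0$. First I would note that $P\supseteq U\mathbf{m}$, where $\mathbf{m}:=(z-\dot z,\,\mathtt{c}-\theta)$ is a maximal ideal of $Z(\mathfrak{s})=\mathbb{C}[z,\mathtt{c}]$ by Corollary~\ref{prop23}. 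Since $\vartheta_\mu=(\mu(h)^2+3\mu(h)+2)\mu(z)$ for every $\mu\in\mathfrak{h}^*$ (see the proof of Lemma~\ref{lem2}), the equation $(t^2+3t+2)\dot z=\theta$ has a root $t\in\mathbb{C}$, and picking $\lambda\in\mathfrak{h}^*$ with $\lambda(z)=\dot z$ and $\lambda(h)=t$ gives $\mathbf{m}=\mathbf{m}_\lambda$. Hence $P$ is the preimage of a primitive ideal of $U_\lambda$, and it remains only to run through the list supplied by Theorem~\ref{thm777}.

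The second step is to observe that the case distinction in Theorem~\ref{thm777}, together with the remark following it (which reduces $\lambda(h)\in\{-\frac{5}{2},-\frac{7}{2},\dots\}$ via $U_\lambda=U_{r\cdot\lambda}$), exhausts all $\lambda$ with $\lambda(z)\neq 0$. In case~\eqref{thm777.1} the algebra $U_\lambda$ is simple, so $P=U\mathbf{m}_\lambda$; here $\Delta(\lambda)$ is itself simple --- by Proposition~\ref{lem3}\eqref{lem3.1} when $\lambda(h)\notin\frac{1}{2}\mathbb{Z}$, by Proposition~\ref{prop5}\eqref{prop5.1} when $\lambda(h)\in\mathbb{Z}$, and by Proposition~\ref{prop4}\eqref{prop4.1} when $\lambda(h)=-\frac{3}{2}$ --- so Theorem~\ref{thm401} gives $P=U\mathbf{m}_\lambda=\mathrm{Ann}_U(\Delta(\lambda))=\mathrm{Ann}_U(L(\lambda))$. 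In case~\eqref{thm777.2} the primitive ideals of $U_\lambda$ are $U\mathbf{m}_\lambda$ and $\mathrm{Ann}_U(L(\lambda))$; the latter is by construction the annihilator of a simple highest weight module, while for the former I would pass to $r\cdot\lambda=-\lambda-3h^{\checkmark}$, which satisfies $(r\cdot\lambda)(h)\leq-\frac{5}{2}$ and $\vartheta_{r\cdot\lambda}=\vartheta_\lambda$, hence $\mathbf{m}_{r\cdot\lambda}=\mathbf{m}_\lambda$; since $\Delta(r\cdot\lambda)$ is simple by Proposition~\ref{prop4}\eqref{prop4.1}, Theorem~\ref{thm401} then yields $U\mathbf{m}_\lambda=\mathrm{Ann}_U(\Delta(r\cdot\lambda))=\mathrm{Ann}_U(L(r\cdot\lambda))$. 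This establishes that every primitive ideal of nonzero central charge is the annihilator of a simple highest weight module. For the reverse inclusion I would note that if $L(\mu)$ is a simple highest weight module with $\mu(z)\neq 0$, then $\mathrm{Ann}_U(L(\mu))$ is a primitive ideal and $z-\mu(z)$ annihilates $L(\mu)$, so its central charge equals $\mu(z)\neq 0$.

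Since the corollary is flagged as an immediate consequence, I do not anticipate a real obstacle; the only points needing care are checking that the cases of Theorem~\ref{thm777} exhaust all $\lambda$ with $\lambda(z)\neq 0$, and identifying, for each primitive ideal of $U_\lambda$, a concrete simple highest weight module of which it is the annihilator --- the slightly non-obvious instance being $U\mathbf{m}_\lambda$ in case~\eqref{thm777.2}, which is handled via the linked weight $r\cdot\lambda$.
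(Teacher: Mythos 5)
Your proposal is correct and follows exactly the route the paper intends: the paper gives no explicit argument (the corollary is stated as an immediate consequence of Theorem~\ref{thm777}), and your reduction of an arbitrary primitive ideal of nonzero central charge to a primitive ideal of some $U_\lambda$ via Corollary~\ref{prop23}, followed by the identification of $U\mathbf{m}_\lambda$ with $\mathrm{Ann}_U(L(\lambda))$ or $\mathrm{Ann}_U(L(r\cdot\lambda))$ using Theorem~\ref{thm401} and the simplicity statements of Propositions~\ref{lem3}, \ref{prop4} and \ref{prop5}, is precisely the intended bookkeeping. The case analysis is exhaustive and the handling of $U\mathbf{m}_\lambda$ in case~\eqref{thm777.2} via the linked weight $r\cdot\lambda$ is the right (and only mildly non-obvious) step.
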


\subsection{On primitive ideals for zero central charge}\label{s6.5}

We expect that the problem of classification of primitive ideals in $U$ for zero central charge might
be very difficult. We note that Corollary~\ref{cor764} does not hold for zero central charge. 
Indeed, simple highest weight modules for zero central charge are exactly the simple $\mathfrak{sl}_2$-modules
and they all are annihilated by $\mathfrak{i}$. In \cite[Section~4]{LMZ} one finds many simple weight
$U$-modules with zero central charge whose annihilators do not contain $\mathfrak{i}$.

\vspace{5mm}

\noindent
B.D.: Department of Math., Uppsala University,
Box 480, SE-751 06, Uppsala, Sweden; e-mail: {\tt brendan.frisk.dubsky\symbol{64}math.uu.se}
\vspace{2mm}

\noindent
R.L.: Department of Math., Soochow university, Suzhou 215006,
Jiangsu, P. R. China; e-mail: {\tt rencail\symbol{64}amss.ac.cn}
\vspace{2mm}

\noindent
V.M.: Department of Math., Uppsala University,
Box 480, SE-751 06, Uppsala, Sweden; e-mail: {\tt mazor\symbol{64}math.uu.se}
\vspace{2mm}

\noindent K.Z.: Department of Math., Wilfrid Laurier
University, Waterloo, Ontario, N2L 3C5, Canada; and College of Math. and
Information Science, Hebei Normal (Teachers) University, Shijiazhuang 050016,
Hebei, P. R. China. e-mail:  {\tt kzhao\symbol{64}wlu.ca}

\end{document}